\documentclass{amsart}

\usepackage{amsthm, amsfonts, amssymb, amsmath, latexsym, enumerate, times, ulem}
\usepackage[colorlinks]{hyperref}
\hypersetup{citecolor=teal, linkcolor=violet, urlcolor=magenta}
\usepackage{tikz}
\usepackage{pgfplots}
\pgfplotsset{compat=1.15}
\usetikzlibrary{arrows}
\usepackage{booktabs}
\usepackage{multirow}
\usepackage{dynkin-diagrams}
\usepackage[latin1]{inputenc}
\usepackage{mathrsfs}
\usepackage{mathtools}
\usepackage{letltxmacro}
\usepackage{multicol}
\LetLtxMacro\orgvdots\vdots
\LetLtxMacro\orgddots\ddots

\makeatletter
\DeclareRobustCommand\vdots{%
	\mathpalette\@vdots{}%
}
\newcommand*{\@vdots}[2]{%
	\sbox0{$#1\cdotp\cdotp\cdotp\m@th$}%
	\sbox2{$#1.\m@th$}%
	\vbox{%
		\dimen@=\wd0 %
		\advance\dimen@ -3\ht2 %
		\kern.5\dimen@
		\dimen@=\wd2 %
		\advance\dimen@ -\ht2 %
		\dimen2=\wd0 %
		\advance\dimen2 -\dimen@
		\vbox to \dimen2{%
			\offinterlineskip
			\copy2 \vfill\copy2 \vfill\copy2 %
		}%
	}%
}
\DeclareRobustCommand\ddots{%
	\mathinner{%
		\mathpalette\@ddots{}%
		\mkern\thinmuskip
	}%
}
\newcommand*{\@ddots}[2]{%
	\sbox0{$#1\cdotp\cdotp\cdotp\m@th$}%
	\sbox2{$#1.\m@th$}%
	\vbox{%
		\dimen@=\wd0 %
		\advance\dimen@ -3\ht2 %
		\kern.5\dimen@
		\dimen@=\wd2 %
		\advance\dimen@ -\ht2 %
		\dimen2=\wd0 %
		\advance\dimen2 -\dimen@
		\vbox to \dimen2{%
			\offinterlineskip
			\hbox{$#1\mathpunct{.}\m@th$}%
			\vfill
			\hbox{$#1\mathpunct{\kern\wd2}\mathpunct{.}\m@th$}%
			\vfill
			\hbox{$#1\mathpunct{\kern\wd2}\mathpunct{\kern\wd2}\mathpunct{.}\m@th$}%
		}%
	}%
}
\makeatother

\usepackage{array}
\usepackage{comment}
\usepackage{paralist}
\usepackage{xcolor}
\usepackage{soul}
\setstcolor{orange}

\newtheorem{theorem}{Theorem}
\newtheorem{lemma}[theorem]{Lemma}

\newtheorem{corollary}[theorem]{Corollary}
\newtheorem{proposition}[theorem]{Proposition}

\newtheorem{question}[theorem]{Question}

\theoremstyle{definition}
\newtheorem{definition}[theorem]{Definition}
\newtheorem{remark}[theorem]{Remark}
\newtheorem*{proposition*}{Proposition}
\newtheorem{example}[theorem]{Example}

\newcommand{\calC}{{\mathcal C}}

\newcommand{\calL}{{\mathcal L}}

\newcommand{\calP}{{\mathcal P}}

\newcommand{\calO}{{\mathcal O}}
\newcommand{\calX}{{\mathcal X}}
\newcommand{\calY}{{\mathcal Y}}
\newcommand{\bbC}{{\mathbb C}}
\newcommand{\bbF}{{\mathbb F}}
\newcommand{\bbP}{{\mathbb P}}
\newcommand{\bbR}{{\mathbb R}}
\newcommand{\bbZ}{{\mathbb Z}}
\newcommand{\bbQ}{{\mathbb Q}}

\newcommand{\MW}{\rm{MW}(\calX)}
\newcommand{\NS}{\rm{NS}(\calX)}
\def\geq{\geqslant}
\def\leq{\leqslant}

\begin{document}
 
\title[Rational elliptic surfaces]{Rational elliptic surfaces with six singular double fibres}

\author[C. Ciliberto] {Ciro Ciliberto \textsuperscript{1}}
\address{\textsuperscript{1} Dipartimento di Matematica, Universit\`a di Roma Tor Vergata, Via O. Raimondo
 00173 Roma, Italia}
\email{cilibert@axp.mat.uniroma2.it}

\author[A. Grassi]{Antonella Grassi \textsuperscript{2}}
\address{\textsuperscript{2} Dipartimento di Matematica, Universit\`a di Bologna}
\address{\textsuperscript{2} Department of Mathematics, University of Pennsylvania}
 \email{antonella.grassi3@unibo.it}
 \email{grassi@upenn.edu}

\author[R. Miranda]{Rick Miranda\textsuperscript{3}}
\address{\textsuperscript{3} Department of Mathematics, Colorado State University, Fort Collins (CO), 80523, USA}
\email{rick.miranda@colostate.edu}

 \author[A. Verra]{Alessandro Verra\textsuperscript{4}}
\address{\textsuperscript{4}Dipartimento di Matematica, Universit\`a Roma Tre}
 \email{verra@mat.uniroma3.it}
 
 \author[A. Zanardini]{Aline Zanardini\textsuperscript{5}}
\address{\textsuperscript{5} Institut de Math\'ematiques, \'Ecole Polytechnique F\'ed\'erale de Lausanne}
 \email{aline.zanardini@epfl.ch}

\subjclass{Primary 14H52, 14J10, 14J26, 14J27
; Secondary 14D20, 14E07, 14E20, 14H10}
 
\keywords{cubic and quartic plane curves, pencils, elliptic surfaces, double covers, Weierstrass equation}
 
\maketitle

\begin{abstract} 
{A rational elliptic surface with section
is a smooth, rational, complex, projective surface  $\mathcal{X}$
that admits a relatively minimal fibration $f: \mathcal{X}\longrightarrow \bbP^1$
such that its general fibre is a smooth irreducible curve of genus one 
and $f$ has a section. In this paper, we classify rational elliptic surfaces with section that have exactly six singular fibres, each counted with multiplicity two. The fibres that appear with multiplicity exactly two
are either of type $II$ or of type $I_2$ of the Kodaira classification. 
We interpret our classification from various viewpoints: 
a pencil of plane cubic curves, 
the Weierstrass equation, 
a double cover of $\bbF_2$ branched over an appropriate trisection 
of the ruling of $\bbF_2$ plus the negative section, 
a double cover of the plane branched along a quartic curve, plus the datum of a point on the plane.
Moreover, either we give explicit normal forms for the plane quartic curve, or we indicate how to find it.}
\end{abstract}

{\hypersetup{hidelinks}
\tableofcontents}

\section*{Introduction} 
A \textit{rational elliptic surface {with section} (RESS)} 
is a smooth, rational, complex, projective surface  $\mathcal{X}$ 
that admits a relatively minimal fibration $f: \mathcal{X}\longrightarrow \bbP^1$ 
such that its general fibre is a smooth irreducible curve of genus one 
and $f$ has a section. 
The study of RESSs is a very classical subject in algebraic geometry, 
which goes back to the 19th century. 
Important and more recent contributions have been made by various authors 
(see, e.g., \cite {bannai2023ramified, elkies6, kuwata, btes, miranda, persson, shiodaMW, vakil}, etc., after the epochal paper by Kodaira \cite {kodaira}).

On a rational elliptic surface with section, 
there are exactly twelve singular fibres of the elliptic fibration, 
if one counts each fibre with the appropriate multiplicity. 
The fibres that appear with multiplicity exactly two 
are either of type $II$ or of type $I_2$ of the Kodaira classification. 
We will call these RESSs \textit{of special type} $(a,b)$ 
if there are $a$ fibres of type $II$ and $b$ of type $I_2$, with $a+b=6$.
In this paper, which is somehow inspired by \cite {vakil}, 
we classify RESSs {of special type} $(a,b)$. 

One can look at a rational elliptic surface with a section from various viewpoints: 
a pencil of (generically smooth) plane cubic curves, 
the Weierstrass equation, 
a double cover of $\bbF_2$ branched along 
an appropriate trisection of the ruling of $\bbF_2$ plus the negative section, 
a double cover of the plane branched along a quartic curve $C$ 
plus the datum of a point $p$ that is not singular for $C$. 
We interpret our classification from all these viewpoints, and in most cases, we give explicit normal forms for the Weierstrass equation 
and the equation of the plane quartic curve in the latter case. 

Let us describe the contents of the paper. 
Section \ref {sec:gen} is devoted to generalities. 
In particular, we introduce the various models of a RESS 
as we indicated above (see \S \ref {ssec:models}) 
and we recall (see \S \ref {NSMW}) some well known facts 
about the Neron-Severi group and the Mordell-Weil group of sections of a RESS,
that are useful to us. 

Section \ref {sec:I_2} is devoted to the classification of RESSs 
that have six fibres of type $I_2$. 
By using the double cover representation of $\bbF_2$, 
we characterise (see Proposition \ref {prop:cub} and Theorem \ref {thm:cub}) 
the pencils of plane cubics that resolve to a RESS of this type. 
In \S \ref {ssec:weir} we give a Weierstrass normal form for these RESSs 
and in \S \ref {sec:double} we give normal forms
for branch curves of the double plane representation. 
It follows from all these representations that
the number of moduli of these RESSs is two (see \S \ref {ssec: modI2}). 

In Section \ref {sec:II}, we deal with the classification of RESSs
with exactly six fibres of type $II$. 
First, we give a Weierstrass normal form for these RESSs (see \S \ref {ssec:wnf}). 
Then, using the structure of the Mordell-Weil group, 
we prove (see Theorem \ref {thm:cr}) that given a RESS 
with exactly six fibres of type $II$,
any two cubic pencils that resolve to such a RESS
are Cremona equivalent. 
In the case under consideration,
it turns out that the modulus of the smooth genus one fibres of the RESS is constant, 
these curves being equianharmonic, i.e., with $J=0$. 
Pencils of (generically smooth) plane cubics with constant modulus 
have been studied in detail by Chisini in \cite {chisini}, 
who, in fact, also characterised the case in question in which the curves of the pencil are equianharmonic. 
Chisini's approach is via the double plane branched along a quartic representation. 
We revisit Chisini's work in this case in \S \ref {ssec:chis} 
and we give a normal form to the equation of the quartic curve $C$ 
and the point $p$ in this case (see Corollary \ref {cor:norm}). 

The subsequent Sections \ref {sec:mix1}, \ref {sec:mix2}, \ref {sec:mix3} 
are devoted to the special RESSs of mixed cases $(a,b)$ with $a+b=6$ and $ab\neq 0$.
As usual, we classify these cases from various viewpoints and give their normal forms.  \medskip

{\bf Aknowledgements:} 
C.C. and A.G are members of GNSAGA of the Istituto Nazionale di Alta Matematica ``F. Severi''.  AG acknowledges funding from the  European Union- NextGenerationEU under the National Recovery and Resilience Plan (PNRR)- Mission 4 Education and research- Component 2 From research to business- Investment 1.1, Prin 2022 ``Geometry of algebraic structures: moduli, invariants, deformations'', DD N. 104, 2/2/2022, proposal code 2022BTA242- CUP J53D23003720006.
\medskip

\section{Generalities} \label{sec:gen}

\subsection{Rational elliptic surfaces} 
We begin by introducing the main character of this paper.
Given a RESS, there may be many sections of the elliptic fibration;
given two sections, the translation on the general fibre by the difference
extends to an automorphism of the surface,
and hence, up to automorphism, any two sections are indistinguishable.
We rigidify the situation by choosing a section and making that part of the given data:

\begin{definition}\label{def:RESS}
A \textit{rational elliptic surface with section} (RESS) is 
{a pair $(\calX,S_0)$,
where $\calX$ is }
a smooth, rational, complex, projective surface 
that admits a relatively minimal fibration $f: \calX\longrightarrow \bbP^1$
such that the general fibre is a smooth irreducible curve of genus one 
and $f$ has a section $\sigma:\bbP^1 \longrightarrow \calX$, 
which we identify with the horizontal curve $S_0=\sigma(\bbP^1)$. 
\end{definition}

In general, given a RESS $f: \calX\longrightarrow  \bbP^1$, 
one can show that $f$ is given by the linear system $|-K_{\calX}|$. 
Hence, $\calX$ can be obtained as a ninefold blow-up of $\bbP^2$ 
at the base points of a pencil of cubic curves 
such that the general curve of the pencil is smooth.
The last blowup produces a $(-1)$-curve $S_0$, which can be taken as the chosen section for the elliptic surface.

Given a RESS $f: \calX\longrightarrow \bbP^1$, 
there are exactly $12$ singular fibres 
(each counted with a suitable multiplicity). 
In this paper, we consider the case in which there are exactly $6$ singular fibres,
each counted with multiplicity two.
We will call these RESSs \textit {of special type}.
This implies that the only Kodaira types that can occur as singular fibres
are a cuspidal curve (type $II$)
and the union of two smooth rational components that meet twice transversely (type $I_2$).
Therefore, the only combinatorial possibilities are $a$ fibres of type $II$ 
and $b=6-a$ fibres of type $I_2$ 
and we will say that such a RESS is \textit{special of type $(a,b)$}. 
A RESS of special type $(6,0)$ [resp. $(0,6)$] is said to be 
\textit{special  of  type $II$} [resp. \textit{special of type $I_2$}]. 
In all other cases, the RESS is said to be of \textit{mixed type}. 

It is a result of Persson and Miranda (\cite{miranda, persson}) that the cases $a=1$ and $a=5$ do not occur. 

\subsection{Models} \label {ssec:models}
{
There are multiple constructions for RESSs,
some of which involve some extra structure,
which we now describe.}


\subsubsection{The Weierstrass model} 
Let us set $\calL\simeq \calO_{\bbP^1}(1)$. 
{The Weierstrass model} can be defined as the closed subscheme 
of $\bbP(\calL^{-2}\oplus \calL^{-3}\oplus \calO_{\bbP^1})$ 
defined by the equation
\begin{equation}\label{weierstrass}
y^2z=x^3+Axz^2+Bz^3,
\end{equation}
where: $A=A(s,t)\in H^0(\bbP^1,\calL^4)$, $B=B(s,t)\in H^0(\bbP^1,\calL^6)$, 
and $(x:y:z)$ are global homogeneous coordinates 
on the $\bbP^2$-bundle $\bbP(\calL^{-2}\oplus \calL^{-3}\oplus \calO_{\bbP^1})$.
In addition, we have that the discriminant 
\begin{equation}\label{def:delta}
D=D(s,t) \coloneqq 4A(s,t)^3+27B(s,t)^2\in H^0(\bbP^1,\calL^{12})
\end{equation}
vanishes exactly at the points whose $f$--fibres are singular.

{
This construction does not depend on any additional data;
it is determined by the pair ($\calX,S_0)$.
Essentially, on the general fibre $F$, the sections of $3S_0$ 
map $F$ to the plane as a cubic curve, with the point ${S_0|}_F$
going to the flex point at $(x:y:z)=(0:1:0)$
(with flex line $z=0$).
The Weierstrass equation of this cubic curve is then obtained 
from a suitable change of coordinates in the plane.
Making the construction global on $\calX$ gives the Weierstrass model.
}

The type of singular fibre at a point $p=(s:t)\in \bbP^1$, 
according to Kodaira's classification \cite{kodaira}, 
can be explicitly recovered from the following data 
(see, e.g., \cite[Table IV.3.1]{btes}):
\begin{itemize}
\item $a=a(p)\coloneqq$ the order of vanishing of $A$ at $p$,
\item $b=b(p)\coloneqq$ the order of vanishing of $B$ at $p$, and
\item $\delta=\delta(p)\coloneqq$ the order of vanishing of $D$ at $p$.
\end{itemize} 
For example, a singular fibre of type $I_2$ 
is characterised by the triple $(a,b,\delta)=(0,0,2)$; 
a singular fibre of type $II$ is characterised by the triple $(a,b,\delta)=(\geq 1,1,2)$.

Often we will use the non--homogeneous version
\begin{equation}\label{Weq}
y^2 = x^3 + A(t)x+B(t)
\end{equation}
of \eqref {weierstrass},  
where $A$ and $B$ are polynomials in the parameter $t$ for the pencil, 
of degrees $4$ and $6$ respectively.
The non--homogeneous form of \eqref {def:delta} is $D(t) = 4A(t)^3+27B(t)^2$;
this is a polynomial of degree $12$,
and has a double root at the points whose fibres are of type $II$ and $I_2$.
Hence in the cases we will consider it must be a perfect square 
of a sextic polynomial with distinct roots.

\subsubsection{The double cover of $\bbF_2$}\label{ssec:dc}
The Weierstrass equation can be interpreted 
as providing a double cover structure $g: Z \longrightarrow \bbF_2$, 
where $\bbF_2$ is the Hirzebruch surface 
$\bbP(\calO_{\bbP^1}\oplus\calO_{\bbP^1}(2))$ 
(see, e.g., \cite[Lecture III, section 2]{btes}).  
The map $g$  is branched over 
the negative section $B$ (of self-intersection $-2$) of $\bbF_2$
and a trisection defined by the right hand side cubic polynomial in $x$ in \eqref {Weq}. 
The RESS $\calX$ is the minimal resolution of the singularities of $Z$.
If $F$ is the fibre class of $\bbF_2$,
then the trisection is in the linear system $|3B+6F|$
and is disjoint from $B$. 
The pencil of curves of genus $1$ is the pull--back of the ruling of $\bbF_2$,
{
and the chosen section $S_0$ is the pre-image of the negative section $B$.
}
From a birational viewpoint, 
the double cover is simply the quotient of $\calX$ by the involution
sending a general point to its inverse in the group law of its fibre,
{where the zero of the group law is given by $S_0$.}

The singular fibres arise from fibres of the ruling that meet the trisection
in fewer than three distinct points.
To obtain a fibre of type $I_2$, the trisection must have an ordinary double point, with (distinct) tangents different from the ruling.
To obtain a fibre of type $II$, the trisection must be smooth, but it should meet the fibre of $\bbF_2$ in one point with multiplicity three: we call this a \textit{flex}.

\subsubsection{The double cover of $\bbP^2$}

{ Another classical construction is provided by }
a Del Pezzo surface of degree $2$, i.e., 
a double cover of the plane $\pi: Y \longrightarrow \bbP^2$ 
branched along a plane quartic ${C}$,
{together with a chosen point $p$ in the plane. }
The elliptic pencil is the pull-back on $Y$ of the pencil of lines through $p$, 
and $\mathcal X$ is the desingularization of the blow--up of $Y$ 
at $\pi^{-1}(p)$. 

{
We will say that the pair $(C,p)$, 
with $C$ a plane quartic and $p$ a point in the plane 
is the \textit{associated quartic} to the RESS 
or that the RESS is \textit{associated} to the pair $(C,p)$. 

If the point $p$ is not on the branch curve $C$,
then the preimage of the blowup of $p$
produces two disjoint sections $S_0$ and $S_1$ on $\calX$;}
{as in \cite{bannai2023ramified}}, {we refer to this as the \textit{split model}.
Hence, this construction yields the additional data of a second section, disjoint from $S_0$.

If the point $p$ is a smooth point of the branch curve $C$,
then in order to resolve the elliptic pencil on $\calX$,
we must blow up $p$ and the point infinitely near to $p$ along $C$.
The first exceptional curve lifts to the chosen section $S_0$;
The second exceptional curve then lifts to a component of a fibre of the elliptic pencil.
Thus, in this case, the construction produces the extra data 
of a choice of a (singular) fibre of the RESS.
We call this case the \textit{ramified model}}, {again borrowing the terminology from \cite{bannai2023ramified}.}



\begin{lemma}\label{lem:PinCi}
 Let $(\calX,S_0)$ be a RESS of special type and let $(C,p)$ be associated to it. 
If $p \in C$ (i.e., a ramified model) then $p$ is not a flex for $C$. 
\end{lemma}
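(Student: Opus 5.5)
We argue by contradiction. Suppose $p\in C$ is a flex of $C$, that is, the tangent line $T$ to $C$ at $p$ satisfies $i_p(T,C)\geq 3$. The aim is to show that the fibre of the elliptic pencil corresponding to the line $T$ has Euler number at least $3$. This contradicts the hypothesis that every singular fibre is of type $II$ or $I_2$, since both have Euler number $2$ and the twelve singular fibres are counted with multiplicity two.

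The first step is to make the resolution in the ramified model explicit. We blow up $p$ to get $E_1$, and then the point $p_1\in E_1$ corresponding to the tangent direction of $C$, to get $E_2$. On this blown-up plane, the strict transform $\tilde C$ meets the fibre over $T$ in the points coming from $T\cap C$ away from $p$, and possibly again at a point of $E_2$ or of $\tilde T$.

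Next we describe the fibre $\Phi$ of $\calX$ over $T$. It is the pull-back of $\tilde T + E_2$, minus nothing else, since $E_1$ lifts to $S_0$. Recall that the double cover is branched along $\tilde C$ plus possibly some exceptional curves; the parity is fixed by $C\sim 4H$. We then compare two cases.

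\emph{If $p$ is an ordinary point}, so that $i_p(T,C)=2$, then $\tilde T$ meets $\tilde C$ in two further points. Moreover $E_2$ is in the branch locus or meets it twice. This gives the expected $I_2$ or smooth behaviour, with the component over $E_2$ forming part of the fibre.

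\emph{If $i_p(T,C)=3$ (flex)}, the strict transform $\tilde C$ passes through the point $\tilde T\cap E_2$. The inverse image of $\tilde T+E_2$ then has two components that meet at a point of the branch locus lying on both curves, giving a tangency or cusp-like configuration. Concretely, I expect the preimage of $E_2$ to be a smooth rational curve, while the preimage of $\tilde T$ is rational with a single intersection point of contact order $2$, or else $\tilde T$ splits into two curves through one common point. The latter is a type $III$ or $I_3$/$IV$ configuration, whose Euler number is $3$ or $4$.

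The same local analysis handles $i_p(T,C)=4$ (a hyperflex). There the fibre becomes of type $I_4$, $IV$ or worse, with Euler number at least $4$.

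As a consistency check, a cleaner route is the count $e(\calX)=12$. We have $e(\calX)=2e(\bbP^2)-e(C)+2$ (two blow-ups) $+$ (resolution corrections). Writing the sum over the lines $L$ through $p$, each contribution is determined by the local intersection of $L$ with $C$. One can then show that a fibre over a line $L$ has Euler number $2$ exactly when $L$ is tangent to $C$ at a single smooth point with contact $2$ (type $I_2$, or $II$ when the contact is at a flex of $C$ off $p$), or passes through a node (and so on). The line $T$ through $p$ with $i_p\geq 3$ contributes more than $2$.

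The main obstacle is the local computation of the branch locus on the two-step blow-up, in particular whether $E_2$ belongs to the branch divisor. I would handle it in local coordinates. Take $C: y=x^3+\dots$ (a flex) and $T: y=0$. Compute the equation of the double cover $w^2=y-x^3$ in the charts of the two blow-ups, and read off directly that the fibre over $y=\lambda x$, $\lambda=0$, acquires a component configuration of Euler number $\geq 3$.
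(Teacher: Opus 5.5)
Your strategy is the paper's: its entire proof is the one-line assertion that the fibre over the tangent line $T$ at $p$ is neither of type $II$ nor of type $I_2$. What your proposal adds is an attempt to justify this, and that part is not carried out. The question you single out as the main obstacle (which curves lie in the branch divisor) is left open, the outcome is stated only as what you ``expect'', and several of the alternatives you list cannot occur. The ``consistency check'' paragraph is also wrong as stated. A line through $p$ simply tangent to $C$ at one other point gives an $I_1$ fibre, not $I_2$. Fibres of type $I_2$ come from bitangents through $p$, lines through nodes, or the simple tangent at $p$ in the ramified model; fibres of type $II$ come from flex tangents through $p$.

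Here is the computation that closes the gap.

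\emph{The branch divisor.} Since $p_1$ lies on both the strict transform $\tilde C_1$ and $E_1$, the total transform of $C$ on the twofold blow-up is $\tilde C+E_1'+2E_2$, with $E_1'$ the strict transform of $E_1$. So the branch divisor is $\tilde C+E_1'$. These two curves are disjoint, because $\tilde C_1$ meets $E_1$ only at $p_1$, transversally.

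\emph{The fibre over $T$.} Thus $E_1'$ lifts to the $(-1)$-section $S_0$. By contrast, $E_2$ is not a branch curve and meets the branch locus exactly at the two points $E_2\cap\tilde C$ and $E_2\cap E_1'$. Its preimage $\Theta_2$ is therefore a smooth rational $(-2)$-curve, and the fibre over $T$ is $\Theta_T+\Theta_2$, with $\Theta_T$ the preimage of $\tilde T$. In particular this fibre is never smooth, contrary to your remark in the ordinary case.

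\emph{Simple flex, $i_p(T,C)=3$.} The curves $\tilde T$, $E_2$, $\tilde C$ pass pairwise transversally through $p_2=\tilde T\cap E_2$. Moreover $\tilde T$ meets the branch locus only at $p_2$ and at the residual point $q$ of $T\cap C$, transversally at both. Hence $\tilde T$ does not split, and $\Theta_T$ is a smooth rational $(-2)$-curve. Take local coordinates $(x,v)$ at $p_2$ with $E_2=\{x=0\}$ and $\tilde C=\{v=0\}$; then $\tilde T=\{x=cv+\cdots\}$ with $c\neq 0$. Upstairs, where $w^2=v$, we get $\Theta_2=\{x=0\}$ and $\Theta_T=\{x=cw^2+\cdots\}$. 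The two components are tangent at a single point, so the fibre is of type $III$ (Euler number $3$), not $I_3$ or $IV$.

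\emph{Hyperflex, $i_p(T,C)=4$.} Here $\tilde T$ is tangent to $\tilde C$ at $p_2$ and meets the branch locus nowhere else. So $\Theta_T$ splits into two $(-2)$-curves meeting transversally over $p_2$, and $\Theta_2$ passes transversally through the same point. This is type $IV$ (Euler number $4$), never $I_4$.

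In either case the fibre is not of type $II$ or $I_2$, which is all the lemma requires.
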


\begin{proof} 
If $p$ is a flex for $C$, 
then the tangent line to $C$ at $p$ 
corresponds to a fibre that is neither of type $II$ nor of type $I_2$.
\end{proof}

\begin{lemma}\label{lem:PinC} 
Let $(\calX,S_0)$ be a RESS of special type $II$ or $I_2$ 
and $(C,p)$ an associated quartic. 
If $p \in C$ (i.e. a ramified model) 
then $\calX$ has to be of special type $I_2$ 
(and $p$ is not a flex for $C$ by Lemma \ref {lem:PinCi}). 
\end{lemma}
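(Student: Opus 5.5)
In the ramified model the construction singles out a special fibre, and I plan to pin down its Kodaira type. Since $\calX$ is of special type $II$ or $I_2$, every singular fibre is of type $II$ or $I_2$, so it suffices to show that when $p\in C$ the distinguished fibre is reducible. That forces it to be of type $I_2$, which excludes special type $II$ (all singular fibres irreducible). Then $\calX$ is of special type $I_2$, and Lemma \ref{lem:PinCi} gives that $p$ is not a flex.

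\textbf{Step 1: the tangent line gives a singular fibre.} Let $p\in C$ be a smooth point (the associated quartic has $p$ non-singular for $C$), and let $T$ be the tangent line to $C$ at $p$. Lines through $p$ pull back to the fibres of the elliptic pencil after the two blow-ups described in the ramified model. A general line $L$ through $p$ meets $C$ at $p$ and at three further points. The double cover of $L$ is branched at four points, giving genus one, and the blow-up at $p$ separates the fibres. For $T$, the intersection $T\cdot C$ contains $2p$, so $T$ is special: its strict transform is tangent to the branch locus or passes through the infinitely near point.

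\textbf{Step 2: the fibre over $T$ is reducible.} As the text says, after blowing up $p$ and the point infinitely near to $p$ along $C$, the second exceptional curve $E_2$ lifts to a component of a fibre. That fibre is the one over $T$, since $T$ is the only line through $p$ passing through the infinitely near point. This fibre therefore contains the lift of $E_2$ together with the lift of the strict transform of $T$, hence has at least two components.

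\textbf{Step 3: compare with Kodaira types.} In special type $II$ all singular fibres are cuspidal curves, which are irreducible, so this case is impossible. Hence $\calX$ is of special type $I_2$, and the fibre over $T$ is of type $I_2$. This is consistent with Lemma \ref{lem:PinCi}: if $p$ were a flex, $T$ would meet $C$ at $p$ with multiplicity $3$ and the fibre would acquire more structure, of type $I_3$ or $III$.

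\textbf{Main obstacle.} The delicate point is Step 2: checking that the lift of $E_2$ really is a component distinct from the strict transform of $T$, i.e.\ that it does not coincide with the whole fibre. To do this I would compute intersection numbers on the blown-up double plane. The lift of $E_2$ is a $(-2)$-curve: $E_2$ is a $(-1)$-curve meeting the branch locus in two points, so its preimage is a rational curve with self-intersection $-2$. A $(-2)$-curve has $K$-degree $0$, but a full fibre has self-intersection $0$. So the fibre is this curve plus at least one more component, namely the preimage of the strict transform of $T$.
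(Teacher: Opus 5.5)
Your proof is correct and follows the paper's argument: the tangent line to $C$ at $p$ gives a reducible fibre (the lift of $E_2$ plus the lift of the strict transform of $T$, in fact of type $I_2$), which cannot occur when every singular fibre is cuspidal, so $\calX$ must be of special type $I_2$. One small slip in a side remark you do not need: a flex at $p$ produces a fibre of type $III$ (or $IV$ for a hyperflex), not $I_3$; the $I_3$ case arises when the tangent at $p$ is a bitangent.
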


\begin{proof}
The tangent line to $C$ at $p$ corresponds to a fibre of type $I_2$. 
The assertion follows.
\end{proof}

{ Table \ref{table:possiblequartics} provides a pictorial description of the types of double fibres arising from a pair $(C,p)$ and a distinguished line $L$ through $p$.} 
The fibres of type $I_2$ arise either from bitangent lines passing through $p$, 
if $ p \notin C$,  
or from lines joining $p$ with a node of $C$ 
and having intersection multiplicity exactly $2$ with $C$ at the node, 
or from the tangent line to $C$ at $p$ if $p\in C$, 
(which is a \textit{simple tangent}, 
i.e., it has exactly intersection multiplicity $2$ with $C$ at $p$). {The fibres of type $II$ arise from simple flex points of $C$
where the flex line passes through $p$.}

\begin{table}[h!]
\centering
\begin{tabular}{>{\centering\arraybackslash} m{0.45\linewidth}| >{\centering\arraybackslash} m{0.35\linewidth}} 
 \toprule
  $(C,p)$ and $L$  & Singular fibre \\ 
  \midrule \\  
  \resizebox{\linewidth}{!}{\begin{tikzpicture}
\draw[black] plot [smooth, tension=2] coordinates { (-1,0) (1,2) (2,0) (4,2) (5,0)} ;
\draw [black] (-3,2) -- (6,2) node[anchor=north]{$L$};
\filldraw[black] (-1.5,2) circle (2pt) node[anchor=north]{$p$};
\end{tikzpicture}}   &  \multirow{7}{*}{$I_2$} \\ \\
\resizebox{0.8\linewidth}{!}{\begin{tikzpicture}
\draw [black] plot [smooth cycle] coordinates {(0,0) (-1,1) (0,2) (1.5,-1) (2,0) (1.5,1) (1,0) (0,-2)(-1,-1)} ;
\draw [black] (-2,0) -- (4,0) node[anchor=south west]{$L$};
\filldraw[black] (3,0) circle (2pt) node[anchor=south]{$p$};
\end{tikzpicture}} &   \\ \cline{1-1} \\
  \resizebox{0.8\linewidth}{!}{\begin{tikzpicture}
\draw[black] plot [smooth, tension=1] coordinates { (-1,0) (1,2) (2,0) (4,2.5) (5,0)} ;
\draw [black] (-1,2) -- (6,2) node[anchor=south]{$L$};
\filldraw[black] (1,2) circle (2pt) node[anchor=south]{$p$};
\end{tikzpicture}} &   \\ \\
  \resizebox{0.8\linewidth}{!}{\begin{tikzpicture}
\draw [black] plot [smooth cycle] coordinates {(0,0) (-1,1.5) (1,2) (2,-1.5) (3,-1.5) (3.5,0) (3,1.5) (2,1.5) (1,-2) (-1,-1.5)};
\draw [black] (-2,0) -- (5,0) node[anchor=south]{$L$};
\filldraw[black] (3.5,0) circle (2pt) node[anchor=south west]{$p$};
\end{tikzpicture}} &   \\ \midrule
\resizebox{0.8\linewidth}{!}{\begin{tikzpicture}
\clip(-3,-2) rectangle (3,1.5);
\draw[line width=1pt,smooth,samples=100,domain=-3:3] plot(\x,{(\x)^(4)+2*(\x)^(3)});
\draw [line width=1pt,domain=-3:3.] plot(\x,{(-0-0*\x)/1}) node[anchor=south east]{$L$};
\filldraw[black] (1.5,0) circle (2pt) node[anchor=south]{$p$};
\end{tikzpicture}}   &  $II$ \\
  \bottomrule                                         
\end{tabular}
\smallskip
\caption{The possible pairs $(C,p)$.}
\label{table:possiblequartics}
\end{table}

\subsubsection{The pencil of cubics} 
{The final construction we will consider is given by }
the plane blown up at nine base points of a pencil of (generically smooth) cubics, 
as we mentioned above. The RESS is obtained by blowing up the base points of the pencil, and we will say that the pencil \textit{resolves} to the RESS.
{In this case the pencil alone does not determine $(\calX,S_0)$;
one must identify the `last' blowup of the resolution of the pencil,
which gives the chosen section $S_0$.}

\subsection{The Mordell-Weil group and the Neron-Severi group of sections}
\label{NSMW}
Let $\MW$ be the set of sections of a RESS $(\calX,S_0)$.
The zero section $S_0$ is a member of $\MW$,
and $\MW$ has a group structure
induced by the group structure on the general fibre;
$S_0$ is the identity of the group. 
$\MW$ is called the \textit{Mordell-Weil group} of $\calX$. 

Let $\NS$ denote the \textit{Neron-Severi group} of $\calX$;
it is a free abelian group of rank $10$,
and the intersection form on $\calX$ gives $\NS$
the structure of a unimodular lattice of signature $(1,9)$,
which is diagonalizable.

Let $U$ be the rank two sublattice of $\NS$
generated by $S_0$ and the fibre class $F$.
This is a unimodular sublattice of signature $(1,1)$ and therefore splits off $\NS$;
the orthogonal complement $U^\perp$
is a rank $8$ sublattice, also unimodular, negative definite, and even;
it is abstractly isomorphic to the $-E_8$ lattice (see \cite [p. 75]{btes}).

Let $R$ be the sublattice of $U^\perp$
generated by fibre components that do not meet $S_0$.
In our case, only type $I_2$ fibres.
So $R$ has a generator for each $I_2$ fibre,
which has two components, each a $(-2)$-curve;
one of which meets $S_0$ (we will systematically call this component $D_i$
where $i\in \{1,\ldots, b\}$ is an index indicating which $I_2$ fibre we are looking at)
and the other of which is a generator for $R$
(we will systematically call this component $C_i$).
Since these generators $\{C_i\}_{1\leq i\leq b}$ live in different fibres,
they are orthogonal in $\NS$,
and so $R$ is a diagonalizable sublattice of $U^\perp$,
freely generated as a lattice by the $C_i$'s,
with diagonal entries all equal to $-2$.

There is a short exact sequence
\begin{equation} \label{UperpMW}
0 \to R \to U^\perp \to \MW \to 0
\end{equation}
(see \cite [p. 70]{btes}) where the map  $\pi:U^\perp \to \MW$
is the summation map:
a divisor on $\calX$
which restricts to a divisor of degree $0$ on the fibres
can be summed, fibre by fibre in the group law of each fibre,
to produce an element $s-0$ in the group law of each fibre,
where $s$ is a point in that fibre.
As that fibre moves, $s$ generates a section.
This is a homomorphism of groups,
but does not respect the intersection form.

There is a function $\sigma:\MW \longrightarrow U^\perp$
which takes a section $S$
and sends it to $\sigma(S) = S-S_0-(k+1)F$
where $k = S\cdot S_0$.
We stress that this function is not a group homomorphism.
Since all sections have self-intersection $-1$,
we see that $\sigma(S)\cdot S_0 = k-(-1)-(k+1) = 0$
and clearly $\sigma(S)\cdot F = 0$ also.
Hence $\sigma(S) \in U^\perp$ as claimed.

This map $\sigma$ is injective:
if $S_1-S_0 - (k_1+1)F$ is linearly equivalent on $\calX$ to $S_2-S_0-(k_2+1)F$,
then $S_1-S_2 + \alpha F$ is linearly equivalent to $0$ for some $\alpha$,
and so, on the general fibre, it restricts to a divisor linearly equivalent to 0.
This forces $S_1 = S_2$.

 In fact, we have 
\[
\pi\circ\sigma = {\rm id_{\MW}}.
\]
To see this, start with a section $S$.
Then $\pi(\sigma(S)) = \pi(S-S_0-(k+1)F) = S$
since the summation map can ignore the fibre contribution
and clearly $S-S_0$ produces the section $S$ with the above summation construction.

A quick computation shows that
\begin{equation}\label{sigmaSsquared}
\sigma(S)^2 = -2 -2S\cdot S_0 \;\;\text{ in }\;\; U^\perp
\end{equation}
and hence the sections that are disjoint from $S_0$
map to $(-2)$-classes in $U^\perp$.
Since $U^\perp$ is abstractly isomorphic to $-E_8$ lattice,
there are exactly $240$ such $(-2)$-classes in $U^\perp$.

Define $\MW^0$ to be the set of sections which are disjoint from $S_0$.
If we contract $S_0$,
we obtain a Del Pezzo surface of degree $1$,
which has finitely many $(-1)$-curves,
and exactly $240$ $(-1)$-classes (see \cite [p. 268]{Bou}).
Since every element of $\MW^0$ will be unaffected by this contraction,
$\MW^0$ must be a finite set.

A second computation shows that if $S_1$ and $S_2$ are disjoint from $S_0$,
and hence in $\MW^0$,
then $\sigma(S_1)\cdot \sigma(S_2) = S_1\cdot S_2 - 1$ in $U^\perp$.
Hence:
\begin{equation}\label{SSdot}
\text{If $S_1$ and $S_2$ are disjoint from $S_0$, then }\;\;
S_1\cdot S_2 = 0 \text{ if and only if } \sigma(S_1)\cdot \sigma(S_2) = -1.
\end{equation}

We next note that given $u \in U^\perp$, we have that
\[
\sigma(\pi(u)) - u \in R
\]
since applying $\pi$ to this gives zero.

Suppose that $u \in U^\perp$ and $u\cdot C_i \in \{0,1\}$ for all $i\in \{1,\ldots, b\}$.
Then we claim the section $\pi(u)$ meets each $C_i$ as $u$ does.

To see this, note that the divisor class $u$ will restrict to a divisor class
on the $I_2$ fibre, and the group law on that fibre is $\bbC^*\times \bbZ/2$,
with the $\bbZ/2$ coordinate recording the two components of the fibre.
Hence if $u\cdot C_i$ is even, the summation map associated to $\pi$ leads to a section which is in the $0$ coordinate of the $\bbZ/2$, and hence meets the $D_i$ component.
However if $u\cdot C_i$ is odd, the summation map gives a section which intersects  $C_i$.  This proves the claim.

We are interested in what follows
in producing sets of sections which are all disjoint,
and all disjoint from $S_0$.
With these observations in mind, we have the following:

\begin{lemma}
Suppose $\{u_\alpha\}$ are sets of elements of $U^\perp$
with the following properties:
\begin{itemize}
\item $u_\alpha^2 = -2$ for all $\alpha$,
\item $u_\alpha\cdot u_\beta = -1$ for all $\alpha \neq \beta$,
\item $u_\alpha \cdot C_i \in \{0,1\}$ for all $\alpha$ and $1\leq i\leq b$.
\end{itemize}
Then $\sigma(\pi(u_\alpha)) = u_\alpha$ and
the corresponding sections $S_\alpha = \pi(u_\alpha)$
are all disjoint, and all disjoint from $S_0$,
and $S_\alpha$ meets each $C_i$ as $u_\alpha$ does.
We have $\sigma(S_\alpha) = u_\alpha$.
\end{lemma}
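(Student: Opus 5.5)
Fix $\alpha$ and write $u=u_\alpha$, $S=\pi(u)$. The key step is to show $\sigma(S)=u$. Everything else then follows from the facts already recorded above.

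By the observation preceding the lemma, $r:=\sigma(\pi(u))-u\in R$, so $r=\sum_i m_iC_i$ with $m_i\in\bbZ$. Since $u\cdot C_i\in\{0,1\}$, the claim proved above says that $S$ meets each $C_i$ as $u$ does. Hence $S\cdot C_i=u\cdot C_i$ for all $i$.

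Next I compute $\sigma(S)\cdot C_i$. We have $\sigma(S)=S-S_0-(k+1)F$ with $S_0\cdot C_i=0$ (since $C_i$ is the component not meeting $S_0$) and $F\cdot C_i=0$. Therefore $\sigma(S)\cdot C_i=S\cdot C_i=u\cdot C_i$. It follows that $r\cdot C_i=0$ for every $i$. On the other hand, the $C_i$ are pairwise orthogonal with $C_i^2=-2$, so $r\cdot C_i=-2m_i$. Thus every $m_i=0$, so $r=0$ and $\sigma(S_\alpha)=u_\alpha$. This also gives the statement that $S_\alpha$ meets each $C_i$ as $u_\alpha$ does.

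Now use \eqref{sigmaSsquared}: $-2=u_\alpha^2=\sigma(S_\alpha)^2=-2-2S_\alpha\cdot S_0$. Hence $S_\alpha\cdot S_0=0$, and since $S_\alpha$ is a section distinct from $S_0$ (it has $\sigma(S_\alpha)\neq 0$), it is disjoint from $S_0$. For $\alpha\neq\beta$, both $S_\alpha$ and $S_\beta$ lie in $\MW^0$, and $\sigma(S_\alpha)\cdot\sigma(S_\beta)=u_\alpha\cdot u_\beta=-1$. By \eqref{SSdot} this gives $S_\alpha\cdot S_\beta=0$. The sections are distinct because $\sigma$ is injective and $u_\alpha\neq u_\beta$ (their intersection is $-1$, not $-2$). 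So they are pairwise disjoint.

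The only delicate point is the step $r=0$. It relies on two inputs: the earlier claim about how $\pi(u)$ meets the $C_i$, and the fact that $R$ is spanned by the orthogonal $(-2)$-classes $C_i$, so that a class in $R$ is determined by its intersections with them. Everything after that is formal from \eqref{sigmaSsquared} and \eqref{SSdot}.
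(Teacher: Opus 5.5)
Your proof is correct and follows the paper's argument essentially step by step. The earlier claim gives $\sigma(\pi(u_\alpha))\cdot C_i=u_\alpha\cdot C_i$, so the difference lies in $R\cap R^\perp=0$; you make this step explicit via $r\cdot C_i=-2m_i$. Then \eqref{sigmaSsquared} and \eqref{SSdot} give disjointness from $S_0$ and pairwise disjointness.
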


\begin{proof}
By the claim made above, if we set $S_\alpha = \pi(u_\alpha)$,
then as a divisor, $S_\alpha$ meets each $C_i$ as $u_\alpha$ does.
Therefore $\sigma(S_\alpha) = \sigma(\pi(u_\alpha))$ also does,
by the definition of $\sigma$ (it only subtracts $S_0$ and a multiple of $F$ from $S_\alpha$,
both of which do not meet $C_i$).
Therefore $\sigma(\pi(u_\alpha))-u_\alpha$ is orthogonal to each $C_i$,
and hence lives in $R^\perp$.
However it also lives in $R$; hence it is zero.
This proves the first statement.

The disjointness statements come from the first two hypotheses:
by \eqref{sigmaSsquared}, $u_\alpha^2 = -2$ 
implies that $S_\alpha$ is disjoint from $S_0$.
Then if we have two such, 
the second hypothesis implies that $S_\alpha$ and $S_\beta$ are disjoint 
using \eqref{SSdot}.
\end{proof}

It is a standard fact in the algebra of the $E_8$ lattice
that any collection of eight $(-2)$-classes
with the property that any two intersect in $-1$
are in the same orbit of the Weyl group; there is only one orbit of such.
This follows from the correspondence with
 $(-1)$-curves on the blowup of $8$ points in the plane.
The eight $(-2)$-classes in $E_8$ correspond to $8$ disjoint $(-1)$-classes,
and it is then obvious that any two sets of such disjoint exceptional curves
are Cremona equivalent,
and the Cremona group is the Weyl group of $E_8$.

It is also a fact that the Weyl group acts transitively on subsets of $3$ roots, 
which are pairwise orthogonal.
This is because, transferring attention to the corresponding $(-1)$-classes,
a triple of pairwise orthogonal roots corresponds to a triple of $(-1)$-classes,
each meeting the other once.  
The sum of the three $(-1)$-curves is a genus one curve of self-intersection three; 
the dimension of its linear system is three.
It is well-known that this is Cremona equivalent 
to a linear system of cubics through six points.

\subsubsection{The height pairing} 
On $\MW$ one defines a pairing
$$
\langle , \rangle: \MW\times \MW\longrightarrow \bbQ,
$$
called the \textit{height pairing}, 
in the following way (see \cite[Sect. 8] {shiodaMW}). One sets 
$$
\langle S,S' \rangle:= 1+S\cdot S_0+S'\cdot S_0+S\cdot S'- c(S,S'),
$$
where $c(S,S')\in \bbQ$ is a term that depends on 
the behaviour of $S$ and $S'$ with respect to the reducible fibres of the elliptic fibration. 
It is not important for us to give a general formula for $c(S,S')$. 
In particular we will have
$$
\langle S,S \rangle:= 2+2S\cdot S_0- c(S),
$$
where we set $c(S)=c(S,S)$, and we call $\langle S,S \rangle$ the \textit{height} of $S$. Note that if $c(S)$ and $\langle S,S \rangle=0$, then $S\cdot S_0=-1$ hence $S=S_0$. 

Two facts are for us important. The first is:

\begin{proposition}[\cite{shiodaMW}, (810)] \label{prop:tors} 
The section $S\in \MW$ is torsion if and only if 
$\langle S,S \rangle=0$.
\end{proposition}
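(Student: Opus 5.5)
The plan is to realise the height pairing as a pairing induced by the intersection form on $\NS$. This gives a bilinear form on $\MW\otimes\bbQ$ that is positive definite modulo torsion, and the statement follows at once. Concretely, I will use Shioda's orthogonal projection
\[
\varphi:\MW\longrightarrow \NS\otimes\bbQ .
\]
Here $\varphi(S)$ is the unique element of the form $S - S_0 - (k+1)F + r$, with $k=S\cdot S_0$ and $r\in R\otimes\bbQ$, that is orthogonal to $U$ and to $R$. Since $U$ is unimodular and $R$ is negative definite, $r$ exists and is unique. It is obtained by solving the linear system $(\sigma(S)+r)\cdot C_i=0$ for all $i$, which determines the local correction terms $c(S,S')$. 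One then checks directly that
\[
\langle S,S'\rangle = -\varphi(S)\cdot\varphi(S').
\]
This is essentially the definition in \cite{shiodaMW}. In our situation only $I_2$ fibres occur, so $r=\sum_i \tfrac12 (S\cdot C_i)\,C_i$ and $c(S)=\tfrac12\#\{i: S\cdot C_i=1\}$. The formula can then be verified by a short computation using \eqref{sigmaSsquared}.

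The next step is to show that $\varphi$ is a group homomorphism whose kernel is exactly the torsion subgroup. For additivity, note that $\pi(\sigma(S_1)+\sigma(S_2))=S_1+S_2$ by \eqref{UperpMW}. Hence $\sigma(S_1+S_2)-\sigma(S_1)-\sigma(S_2)\in R$, and it is killed by projecting orthogonally to $R$. For the kernel, suppose $\varphi(S)=0$. Then $\sigma(S)\in R\otimes\bbQ\cap U^\perp$, so some multiple $m\,\sigma(S)$ lies in $R$. Applying $\pi$ gives $mS=0$, so $S$ is torsion. Conversely, if $mS=0$ then $m\varphi(S)=\varphi(mS)=\varphi(S_0)=0$, and since the target is torsion-free, $\varphi(S)=0$.

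Finally, $\varphi(S)$ lies in $(U\oplus R)^\perp\otimes\bbQ\subset U^\perp\otimes\bbQ$, and $U^\perp\simeq -E_8$ is negative definite. Therefore
\[
\langle S,S\rangle=-\varphi(S)^2\ge 0,
\]
with equality if and only if $\varphi(S)=0$, that is, if and only if $S$ is torsion. The main obstacle is the identification of $\langle\,,\rangle$ with $-\varphi(S)\cdot\varphi(S')$. The paper deliberately leaves $c(S,S')$ unspecified, so this step must either be taken from \cite[Sect.~8]{shiodaMW} as the definition or be checked by the explicit $I_2$ computation above. Everything after that is formal lattice theory.
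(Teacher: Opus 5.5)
Your proof is correct, and it is exactly Shioda's argument (orthogonal projection $\varphi$ onto $(U\oplus R)^\perp\otimes\bbQ$, kernel equal to the torsion subgroup, negative definiteness of $U^\perp$), which the paper invokes by citing \cite[(8.10)]{shiodaMW} without reproducing it; your $I_2$ computation $-\varphi(S)^2=2+2\,S\cdot S_0-\frac m2$ correctly matches the paper's formula for $\langle S,S\rangle$. One caveat: the bilinear identity $\langle S,S'\rangle=-\varphi(S)\cdot\varphi(S')$ holds with Shioda's sign $-S\cdot S'$, not with the $+S\cdot S'$ misprinted in the paper's general formula, but the statement only involves $\langle S,S\rangle$, so this does not affect your argument.
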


The second one is the computation of $c(S)$ 
in our specific case in which all the reducible fibres are of type $I_2$. 
In this case a reducible fibre $F$ is of the form $F=C+D$ 
where $C$, $D$ are irreducible $(-2)$--curves such that $C\cdot D=2$, 
and only one of them, precisely $D$, intersects the $0$--section $S_0$. 
Then the local contribution of $F$ to $c(S)$ is $0$, 
if $S$ intersects $D$, it is $1/2$ if $S$ intersects $C$ (see \cite[l.c.]{shiodaMW}). 
Then $c(S)$ is the sum of all local contributions of the reducible fibres. 
So if there are exactly $b$ reducible fibres of type $I_2$, 
and for exactly $m$ of them the contribution to $c(S)$ is non--zero, 
we have
$$
\langle S,S \rangle:= 2+2\,S\cdot S_0- \frac {m}2.
$$

\begin{lemma}\label{cor:E2TorsionIFF}  
Let $(\calX,S_0)$ be a RESS of special type $(a,b)$ 
and $(C,p)$ an associated quartic {with $p \notin  C$}.  
Let $S_0$ and $S$ be the two sections of $\calX$ 
arising from the blow up of the point $p$ and  
let $S_0$ be the $0$-section. Then:
\begin{enumerate}[(i)]
    \item the height of $S$ is $\langle S,S\rangle = 2- \frac{m}{2}$, 
where $m$ is the number of bitangents to $C$ passing through $p$;
  \item $S$ is a torsion section in $\MW$ if and only if 
$C$ has  $4$ bitangents passing through $p$. 
In this case, $S$ is of order $2$.
\end{enumerate}
\end{lemma}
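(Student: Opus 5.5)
We will apply the height formula stated just above for surfaces whose reducible fibres are all of type $I_2$: $\langle S,S\rangle = 2+2\,S\cdot S_0 - \frac{m}{2}$, where $m$ is the number of $I_2$ fibres in which $S$ meets the component $C_i$ not meeting $S_0$. Two geometric inputs are needed: $S\cdot S_0=0$, and the identification of $m$ with the number of bitangents to $C$ through $p$.

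\emph{Step 1: $S\cdot S_0=0$.} Since $p\notin C$, the double cover $Y\to\bbP^2$ is étale over a neighbourhood of $p$. Hence $\pi^{-1}(p)$ consists of two distinct points. Blowing them up gives two disjoint exceptional $(-1)$-curves, and these are the sections $S_0$ and $S$ (split model). Thus $S\cdot S_0=0$.

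\emph{Step 2: counting $m$.} The $I_2$ fibres come from lines $L$ through $p$ of two kinds, as listed before Table~\ref{table:possiblequartics}: bitangents to $C$, or lines through a node of $C$. We identify the fibre components in each case.
\begin{itemize}
\item If $L$ is a bitangent, then $\pi^{-1}(L)$ is a plane section of the double cover with $L\cap C=2q_1+2q_2$. The restricted double cover of $L\cong\bbP^1$ is branched over the divisor $2q_1+2q_2$, so it splits into two rational curves $\Gamma_1,\Gamma_2$ meeting at the two points over $q_1,q_2$. The two points of $\pi^{-1}(p)$ are distinct points of $\pi^{-1}(L)$, one on each component, since they are interchanged by the covering involution, which swaps $\Gamma_1$ and $\Gamma_2$. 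Hence $S_0$ meets one component and $S$ meets the other, namely $C_i$. This fibre contributes $\frac12$.
\item If $L$ passes through a node of $C$, then $\pi^{-1}(L)$ is irreducible, a nodal curve whose normalisation is branched at the two further points of $L\cap C$. After resolving the $A_1$ singularity of $Y$ over the node, the strict transform of $\pi^{-1}(L)$ is the component containing both points over $p$, so it is $D_i$. The exceptional $(-2)$-curve is $C_i$, which $S$ does not meet. This fibre contributes $0$.
\end{itemize}
So $m$ equals the number of bitangents through $p$, and (i) follows: $\langle S,S\rangle = 2-\frac m2$.

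\emph{Step 3: torsion.} By Proposition~\ref{prop:tors}, $S$ is torsion if and only if $m=4$. For the order, the covering involution of $Y$ lifts to $\calX$, preserves the fibration, and swaps $S_0$ and $S$. On each smooth fibre $E$ it is an involution fixing no point of $\pi^{-1}(p)$, so it acts without fixed points on $E$. A fixed-point-free involution of an elliptic curve is a translation by a $2$-torsion point; here the translation sends $S_0|_E$ to $S|_E$. Therefore $2S=S_0$ in $\MW$. Since $S\neq S_0$, $S$ has order exactly $2$. As a consistency check, this fixed-point-free translation argument shows $S$ is $2$-torsion whenever $p\notin C$. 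Combined with the "if and only if", this forces $m=4$, matching the height computation.

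\emph{Main obstacle.} The delicate point is Step 2, in particular deciding which component $S$ meets in the nodal case. I would settle it by a local computation of the blow-up of the $A_1$ point together with the pencil of lines through $p$.
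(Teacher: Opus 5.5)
Your Steps 1 and 2 are correct and follow the paper's route for (i). In fact, you supply the component identification that the paper dismisses as ``immediate to see''. The torsion criterion via Proposition~\ref{prop:tors} is also as in the paper.

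\textbf{The gap: the order-two argument in Step 3.} The covering involution $\iota$ of $Y\to\bbP^2$ is \emph{not} fixed-point-free on a smooth fibre $E=\pi^{-1}(L)$. It is the deck transformation of the double cover $E\to L$, which is branched at the four points of $L\cap C$, so $\iota$ fixes the four ramification points. Concretely, the fibres of $E\to L$ are linearly equivalent, so $P+\iota(P)\sim S_0|_E+S|_E$. Thus in the group law with origin $S_0|_E$ you get $\iota(P)=s-P$, where $s=S|_E$. This is inversion followed by translation, not a translation. Knowing that $\iota$ fixes neither point over $p$ tells you nothing about the other points of $E$.

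Your ``consistency check'' actually exposes the error. Your argument never uses $m=4$, so it would make $S$ two-torsion for \emph{every} split model, and hence force $m=4$ always. That contradicts your own formula $\langle S,S\rangle=2-\frac m2>0$ for $m<4$, and the split models with $m=3,2,0$ listed in Table~\ref{tab:possibilities}. So the delicate point was not Step 2, as you anticipated, but this one.

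\textbf{The fix.} To get the order when $m=4$, use the component groups, as the paper does. Each $I_2$ fibre has component group $\bbZ/2$, so in every reducible fibre $2S$ meets the component $D_i$, the one that meets $S_0$. Hence $c(2S)=0$. Since $S$ is torsion, $\langle 2S,2S\rangle=4\langle S,S\rangle=0$, which reads $2+2\,(2S)\cdot S_0=0$. So $(2S)\cdot S_0=-1$, and a section with negative intersection with $S_0$ must equal $S_0$, giving $2S=S_0$. Since $S$ is disjoint from $S_0$, $S\neq S_0$, and the order is exactly two.

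Equivalently, you may invoke the standard fact that the torsion subgroup of the Mordell--Weil group injects into the product of the component groups of the reducible fibres. Here all of these are $\bbZ/2$.
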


\begin{proof} 
(i) By the hypothesis, there are $b$ reducible fibres of the RESS, 
and exactly $m\leq b$ of them correspond to bitangent lines passing through $p$, 
whereas $b-m$ correspond to lines joining $p$ with a node of $C$. 
Now it is immediate to see that only the reducible fibres 
coming from the bitangents contribute to $c(S)$, proving (i). 

(ii) That $S$ is torsion follows from Proposition \ref {prop:tors}. 
To prove that $S$ has order $2$, 
note that $S$ restricts to a divisor class on each of the $I_2$ fibre, 
and the group law on such a fibre is $\bbC^*\times \bbZ/2$, 
with the $\bbZ/2$ coordinate recording the two components of the fibre.
Hence $2S$ restricts to a divisor class
on each of the $I_2$ fibre that has $0$ as its $\bbZ/2$ components. 
This means that, for each $I_2$ fibre, 
${2}S$ intersects the same component that $S_0$ intersects. 
This implies that $c(2S)=0$ and the height of $2S$ is also zero 
by Proposition \ref {prop:tors}. 
As we noted above, this implies that $2S=S_0$, i.e., $S$ has order 2, as claimed. 
\end{proof}

\section{Rational elliptic surfaces of special  type $I_2$}\label{sec:I_2}

In this section, we will describe the RESSs of special type $I_2$.

\subsection{The basic example}\label{ssec:basic}
Fix two smooth conics $C_1, C_2$ in $\bbP^2$ 
which are bitangent at two points $p_1$ and $p_2$.
Choose a line $L_1$ which is tangent to $C_2$ at a point $q_1$ and is transverse to $C_1$.
Similarly, choose a line $L_2$, tangent to $C_1$ at $q_2$, transverse to $C_2$.
Let $r$ be the point of intersection between $L_1$ and $L_2$.

Let us consider the pencil generated by $C_1+L_1$ and $C_2 + L_2$.
The nine base points of this pencil are:
\begin{itemize}
\item $p_1$ and $p_2$ and the two points are infinitely near to each other in the direction of the common tangents to the conics at these points;
\item $q_1$ and $q_2$ and the two points are infinitely near to each other in the direction of the two lines;
\item the point $r$.
\end{itemize}

\begin{lemma}
The RESS obtained by resolving these nine base points of the pencil
is special of type $I_2$.
\end{lemma}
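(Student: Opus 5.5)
The plan is to exhibit six reducible fibres, each containing a $(-2)$-curve, and then let the Euler characteristic count $\sum_F e(F)=12$ force all of them to be of type $I_2$ and all other fibres to be smooth. Write $H$ for the pull-back of a line. Write $E_{p_1},E_{p_1'},E_{p_2},E_{p_2'},E_{q_1},E_{q_1'},E_{q_2},E_{q_2'},E_r$ for the total transforms of the exceptional curves of the nine blow-ups, with primes denoting the infinitely near points. The fibre class is then $F=3H-\sum E$.

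First I identify two fibres. The conic $C_1$ passes through $p_1,p_1',p_2,p_2'$, since it is tangent to $C_2$ there. It also passes through $q_2,q_2'$, since $L_2$ is tangent to $C_1$ at $q_2$. It does not pass through $q_1,q_1',r$, because $L_1$ is transverse to $C_1$ and $q_1\in C_2$. So its strict transform has class $2H-E_{p_1}-E_{p_1'}-E_{p_2}-E_{p_2'}-E_{q_2}-E_{q_2'}$, and self-intersection $4-6=-2$. The strict transform of $L_1$ has class $H-E_{q_1}-E_{q_1'}-E_r$, with self-intersection $-2$. The two classes add up to $F$, so $\widetilde C_1+\widetilde L_1$ is a whole fibre. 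Its two components meet in the two points of $C_1\cap L_1$; these are not base points, because $q_1\notin C_1$, $r\notin C_1$, and $L_1$ does not pass through $p_1,p_2,q_2$. The same argument applies to $\widetilde C_2+\widetilde L_2$. These two fibres are distinct, since they are different members of the pencil.

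Next, the four pairs of infinitely near base points give four vertical $(-2)$-curves $E_{p_1}-E_{p_1'}$, $E_{p_2}-E_{p_2'}$, $E_{q_1}-E_{q_1'}$ and $E_{q_2}-E_{q_2'}$. Each is vertical because it is orthogonal to $F$. Each lies in some fibre, namely the one coming from the cubic of the pencil that is singular at the corresponding base point. None of them is a component of the two fibres above, because those fibres are already exhausted by $\widetilde C_i+\widetilde L_i$. No fibre is a single $(-2)$-curve, since fibres have square $0$. Hence each of these four curves lies in a reducible fibre whose other components are not among our six curves.

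The key step is to show that these six reducible fibres are pairwise distinct. The four exceptional $(-2)$-curves are pairwise disjoint. Suppose two of them lay in the same fibre. Then that fibre would contain two disjoint $(-2)$-curves plus at least one further component connecting them, so it would have at least three components and $e\geq 3$. Each of the other reducible fibres has $e\geq 2$, so the total would exceed $12$ (indeed $e\geq 2\cdot 5+3=13$), a contradiction. The same count gives the conclusion. We have six distinct reducible fibres, each with $e\geq 2$, and $\sum e=12$. Therefore each has $e=2$ and every other fibre is smooth. A reducible fibre with $e=2$ is of Kodaira type $I_2$, since type $III$ has $e=3$. So the RESS is special of type $I_2$.

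The main obstacle I anticipate is justifying that the fibres containing the four exceptional $(-2)$-curves are indeed distinct and cannot coincide in some degenerate way. The Euler-count argument above should handle this. If it fails, the fallback is explicit coordinates: $C_i\colon xy=c_iz^2$, with $L_i$ tangent lines, after which one factors the discriminant of the pencil directly.
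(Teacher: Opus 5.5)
Your identification of the two conic--line fibres and of the four vertical $(-2)$-curves $E_x-E_{x'}$ is fine. The step you single out as key, however, does not work. Suppose two of those curves lie in one fibre. Then your six reducible fibres collapse to at most five distinct ones, so the Euler count gives $3+2\cdot 4=11$, not $3+2\cdot 5=13$, and there is no contradiction. The sharper bound $e\geq 4$ (a Kodaira fibre containing two disjoint components is $I_n$ with $n\geq 4$ or of starred type) only reaches $12$. No purely numerical argument can close this, because the conclusion actually fails for configurations satisfying everything you use. Take $C_1\colon xy=z^2$ and $C_2\colon xy=cz^2$ with $c\neq 0,1$, so $p_1=(1:0:0)$ and $p_2=(0:1:0)$. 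Let $q_2=(s^2:1:s)$ with $L_2\colon x+s^2y-2sz=0$, and $q_1=(c^2s^2:c:cs)$ with $L_1\colon x+cs^2y-2csz=0$, where $s\neq 0$. All tangency and transversality conditions hold, and the nine base points are exactly as listed, with $r=(0:2:s)$. Yet
\[
(xy-z^2)(x+cs^2y-2csz)-(xy-cz^2)(x+s^2y-2sz)=(c-1)\,x\,(z-sy)^2 .
\]
So the pencil contains $2\ell+T$, where $\ell\colon z=sy$ passes through $p_1,q_1,q_2$, and $T\colon x=0$ is the common tangent at $p_2$, which passes through $r$. The corresponding fibre is $2\tilde\ell+\tilde T+(E_{p_1}-E_{p_1'})+(E_{q_1}-E_{q_1'})+(E_{q_2}-E_{q_2'})$, of type $I_0^*$. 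The surface has configuration $I_0^*+3I_2$, yet every step of your argument applies to it verbatim.

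What is missing is a geometric reason why the four fibres are distinct, and that reason needs a general-position hypothesis. The paper's proof uses one implicitly when it asserts that the members of the pencil singular at $p_1,p_2,q_1,q_2$ are irreducible nodal cubics. That assertion gives the $I_2$ type of each fibre at once: the strict transform of the nodal cubic plus $E_x-E_{x'}$, meeting in two points. It also gives distinctness, since an irreducible cubic has only one singular point. Within your framework the natural repair runs as follows. As you note, $E_x-E_{x'}$ lies in the fibre of the member singular at $x$. So a coincidence means some member is singular at two of the four points, and by B\'ezout it then contains the line $\ell$ joining them. Assume no three of $p_1,p_2,q_1,q_2,r$ are collinear. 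Then $\ell$ contains no further base point, proper or infinitely near. Indeed, a line through an infinitely near base point is a common tangent or $L_1$ or $L_2$, and each of these contains only one of $p_1,p_2,q_1,q_2$, using $r\neq q_1,q_2$. Hence $\tilde\ell\cdot F=3-2=1$, and $\tilde\ell$ cannot be a fibre component. Once the six fibres are known to be distinct, your Euler count correctly forces them all to be $I_2$ and all other fibres to be smooth.
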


\begin{proof}
Two of the six $I_2$ fibres are the conic-line members 
that are given as the generators of the pencil.

For each of the four points $p_i,q_j$, 
there is a member of the pencil which is singular there.
These four members are irreducible nodal cubics;
for each of them, 
the first exceptional curve of the (double) blowup at these four points
meets the proper transform of the nodal cubic (which is a smooth rational curve) 
at two points,
and this proper transform plus that first exceptional curve form the two components
of the $I_2$ fibre corresponding to that singular member of the pencil.

This gives the six singular fibres of type $I_2$. 
\end{proof}

Let us call such a pencil a \textit{bitangent conic-line pencil of cubics}.

\begin{lemma}
The set of bitangent conic-line pencils 
form an irreducible $10$-dimensional family of cubic pencils.
\end{lemma}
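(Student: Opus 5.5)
We parametrize the data defining a bitangent conic-line pencil and count dimensions. The data are a pair of conics $(C_1,C_2)$ bitangent at two points, together with the lines $L_1$ and $L_2$. The strategy is to exhibit the parameter space as a tower of irreducible fibrations, compute its dimension, and then check that the map to the Grassmannian of pencils of cubics, $G(1,9)$, has finite generic fibres.

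\textbf{Step 1 (bitangent conic pairs).} Choose $C_1$ in the $5$-dimensional irreducible space of smooth conics. Conics bitangent to $C_1$ are exactly the members $\lambda C_1+\mu M^2$ of the pencil spanned by $C_1$ and the double line $M^2$, where $M$ is a line meeting $C_1$ transversally. The relevant parameters are therefore $M$, an open subset of $\check{\bbP}^2$ ($2$ parameters), and the choice of a member of that pencil ($1$ parameter). The pairs $(C_1,C_2)$ thus form an irreducible variety of dimension $5+2+1=8$, after removing the closed loci where $C_2$ is singular.

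\textbf{Step 2 (the lines).} $L_1$ ranges over the tangent lines to $C_2$, which form the dual conic $\check C_2\cong\bbP^1$, minus the finitely many lines tangent to $C_1$ or through $p_1,p_2$. Similarly $L_2$ ranges over an open subset of $\check C_1$. Each contributes one parameter, and each family is an irreducible $\bbP^1$-bundle over the pair space. The total data form an irreducible variety $\mathcal V$ of dimension $8+1+1=10$. Genericity conditions, such as transversality and $r\notin C_1\cup C_2$, are open.

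\textbf{Step 3 (injectivity up to finitely many choices).} The map $\mathcal V\to G(1,9)$ sends the data to the pencil $\langle C_1+L_1,\,C_2+L_2\rangle$. Its image is irreducible, being the image of an irreducible variety. To show the image has dimension $10$, we show the fibres are finite, and this is the main obstacle. The key point is that by the preceding lemma a generic such pencil has exactly six $I_2$ fibres. Among the singular members, the conic-plus-line members are distinguished from the irreducible nodal ones by reducibility. So a pencil contains only finitely many reducible members, namely the two generators. Each reducible member decomposes uniquely as a conic plus a line, which recovers $C_1,L_1,C_2,L_2$ up to swapping the indices. Hence the generic fibre has at most $2$ points, and the family of bitangent conic-line pencils is irreducible of dimension $10$.

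If one worries that degenerate pencils might acquire extra reducible members, it suffices to verify finiteness at one explicit example, e.g.\ $C_1: xz-y^2$ and $C_2: xz-y^2+\varepsilon x^2$ with chosen tangent lines. Upper semicontinuity of fibre dimension then gives finite generic fibres.
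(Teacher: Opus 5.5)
Your proposal is correct and uses the same parameter count as the paper: $8$ parameters for the pair of bitangent conics (the paper counts $6$ for the bitangent pencil of conics plus $2$ for choosing the two members, you count $5+2+1$) and $2$ for the two tangent lines. Your Step 3, which shows that the map to pencils of cubics has finite generic fibres by recovering $(C_1,L_1,C_2,L_2)$ from the finitely many reducible members, supplies a point the paper leaves implicit.
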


\begin{proof}  
The family of bitangent pencil of conics is clearly irreducible 
and depends on six parameters. 
The choice of the two conics in the pencil 
depends on two parameters 
and the two tangent lines on two more parameters. 
The assertion follows.
\end{proof}

\subsection{The double cover of $\bbF_2$ representation}

\begin{proposition}\label{prop:three}
Suppose that $(\calX,S_0)$ is a RESS of special type $I_2$.
In the double cover representation of $\bbF_2$,
we have the double cover map $g:Z \longrightarrow \bbF_2$
which is branched over the $(-2)$-section $B$
and a trisection $T$ in the linear system $|3B+6F|$.
Then $T$  splits into $3$ disjoint sections $T_1, T_2, T_3$,
each having self-intersection $2$; 
they are all members of the linear system $|B+2F|$.
Any two of these meet in two distinct points,
{and they do not all meet at any one point,}
giving six nodes. 
\end{proposition}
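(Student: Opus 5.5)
The plan is to study the normalization of the trisection $T$ and use Riemann--Hurwitz on the degree-three map $T\to\bbP^1$ given by the ruling.

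First I will record the local picture. By \S\ref{ssec:dc}, each of the six $I_2$ fibres comes from an ordinary node of $T$ whose two tangents differ from the ruling. Over such a fibre, $T$ meets $F$ in the node, counted twice, plus one further point. Since the discriminant $D$ is the square of a sextic with distinct roots, these six fibres are the only fibres where $T\cap F$ fails to consist of three distinct points. In particular, $T$ has no flex and no simple tangency with the ruling. Moreover $T$ is reduced, because $D\neq 0$.

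Next I compute the genus. By adjunction on $\bbF_2$ we have $K=-2B-4F$, so for $T\in|3B+6F|$
\[
T\cdot(T+K)=(3B+6F)(B+2F)=-6+6+12=12,
\]
and the arithmetic genus is $p_a(T)=7$. Let $\tilde T\to T$ be the normalization and $\phi:\tilde T\to\bbP^1$ the induced degree-three map. At each node the two branches are transverse to $F$, so $\phi$ is unramified over the node's fibre. The same holds away from the six special fibres, and the extra simple point over each special fibre is unramified as well. Hence $\phi$ is étale.

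Since $\bbP^1$ is simply connected, $\tilde T$ is a disjoint union of three copies of $\bbP^1$, each mapping isomorphically to the base. So $T=T_1+T_2+T_3$ with each $T_i$ a section. As a check, $p_a=7$ minus six nodes gives $1$, and indeed three disjoint $\bbP^1$'s have $\chi=3=1-(1-3)$, which is consistent.

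Now I identify the classes. Each $T_i$ is a section, so $T_i\sim B+k_iF$ with $k_i\geq 0$, or $T_i=B$. But $T$ is disjoint from $B$, so $T_i\ne B$ and $T_i\cdot B=k_i-2=0$, which forces $k_i=2$. Therefore $T_i\in|B+2F|$ and $T_i^2=-2+4\cdot 2\cdot\frac12\cdot 2$; directly, $(B+2F)^2=-2+4=2$. Then $T_i\cdot T_j=2$ for $i\neq j$. The six singular points of $T$ are exactly the pairwise intersections, $2+2+2=6$, so each pair meets in two distinct points transversally; a tangency would give a tacnode rather than a node.

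Finally, no point lies on all three sections. If it did, that point would be a triple point of $T$, not a node, and its fibre would meet $T$ in a single point, contradicting the local picture.

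The main obstacle is justifying rigorously that the only non-reduced fibre intersections are the six nodes, with branches transverse to the ruling. This step relies on the Kodaira/discriminant correspondence stated in \S\ref{ssec:dc} together with $\delta=2$ at exactly six points.
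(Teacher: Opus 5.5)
Your proof is correct, but its core differs from the paper's. The paper argues by arithmetic genus. It uses $p_a(T)=4$: an irreducible curve of arithmetic genus $4$ cannot carry six nodes, so $T$ is reducible. A reducible trisection with no fibre components has a component $T_1$ which is a section, necessarily in $|B+2F|$. The residual curve $T-T_1\in|2B+4F|$ has $p_a=1$ but still contains two of the nodes, so it splits as $T_2+T_3$. (In the paper the intersection number should read $(B+2F)\cdot(2B+4F)=4$, so four nodes lie on $T_1$ and two on the residual curve; the conclusion is unaffected.)

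You instead show that the normalization $\tilde T\to\bbP^1$ is a finite unramified triple cover, hence trivial. Your route gives all three sections at once and needs no intersection bookkeeping. It does use finer input: the node branches are transverse to the ruling, and there is no ramification elsewhere. The paper's count needs only that $T$ is reduced, contains no fibre, and has six nodes.

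There are two things to fix. First, your adjunction computation is wrong: $(3B+6F)\cdot(B+2F)=-6+6+6+0=6$, so $p_a(T)=4$, not $7$. With the correct value your ``check'' genuinely works: $4-6=-2=1-\chi(\mathcal{O}_{\tilde T})=1-3$ for three disjoint copies of $\bbP^1$. Your value $7$ would instead give $1\neq -2$, and the expression $\chi=3=1-(1-3)$ does not express anything meaningful. Since the \'etale argument never uses $p_a$, nothing breaks, but that paragraph and the garbled formula for $T_i^2$ should be corrected.

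Second, the step you flag as the main obstacle closes easily. At an $I_2$ fibre $t=t_0$ one has $(a,b,\delta)=(0,0,2)$, so the cubic in $x$ has a double but not a triple root; a triple root would force $A(t_0)=B(t_0)=0$. Locally it factors as $(x-r(t))\,q(x,t)$ with $q$ a monic quadratic. The discriminant of $q$ vanishes to the same order as $D$, since the two differ by the square of the unit $q(r(t),t)$. Hence $q=(x-s(t))^2-(t-t_0)^2u(t)$ with $u(t_0)\neq 0$, and $q$ splits into the two analytic graphs $x=s(t)\pm(t-t_0)\sqrt{u(t)}$. This is exactly a node with both branches transverse to $F$. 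It also shows directly that the local monodromy of the three roots is trivial, which is what underlies your \'etale argument.
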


\begin{proof} 
In order for $\calX$ to have six $I_2$ fibres, 
the trisection $T$ must have six nodes, 
none of which have the fibres as tangents. 
Since $T$ has arithmetic genus $4$, $T$ must be reducible. 
Then at least one $T_1$ of the irreducible components of $T$ 
must be a section of the ruling of $\bbF_2$. 
Since $T$ does not intersect $B$, then  $T_1\in |B+2F|$ and it is therefore smooth. 
Since $(B+2F)\cdot (2B+4F)=2$, 
on $T_1$ there are only $2$ double points of $T$, 
and the remaining $4$ are on $T-T_1$, that has arithmetic genus $1$. 
So $T-T_1=T_2+T_3$, with $T_2,T_3\in |B+2F|$ and the assertion follows. 
\end{proof}

\subsection{Back to the basic example}

Our next objective is to prove the following: 

\begin{proposition}\label{prop:cub}
Every pencil of plane cubics that resolves to a RESS of special type $I_2$ 
is Cremona equivalent to a bitangent conic-line pencil of cubics.
\end{proposition}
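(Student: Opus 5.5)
Since $\calX$ is fixed, any two birational morphisms $\calX\to\bbP^2$ that blow down nine curves to the base points of a cubic pencil differ by a Cremona transformation of the plane. So it suffices to show the following: on an arbitrary RESS $(\calX,S_0)$ of special type $I_2$ one can find an \emph{exceptional configuration}, i.e.\ nine successive $(-1)$-contractions down to $\bbP^2$, whose image pencil is a bitangent conic-line pencil. In the basic example of \S\ref{ssec:basic} the configuration consists of:
\begin{itemize}
\item five pairwise disjoint sections: $E_r$ and the four second exceptional curves $E'_{p_1},E'_{p_2},E'_{q_1},E'_{q_2}$;
\item four $(-2)$-curves $E_{p_1},\dots,E_{q_2}$, which are components of four distinct $I_2$ fibres, with each $E'_\ast$ meeting exactly one of them, namely the corresponding $E_\ast$.
\end{itemize}
The plan is to produce such a configuration intrinsically on $\calX$.

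\emph{Step 1: full $2$-torsion.} By Proposition \ref{prop:three}, the branch trisection is $T=T_1+T_2+T_3$ with $T_i\in|B+2F|$. Since $T_i$ lies in the branch locus, its preimage in $\calX$ maps isomorphically onto $T_i$. It is therefore a section $S_i$ fixed by the inverse involution, i.e.\ a $2$-torsion section disjoint from $S_0$. Thus $\MW\supset(\bbZ/2)^2$, with $S_0,S_1,S_2,S_3$ pairwise disjoint (torsion sections are disjoint from each other). The two nodes of $T_i\cap T_j$ are exactly the two $I_2$ fibres where $S_i$ and $S_j$ pass through the same non-identity component. From this one reads off which component of each of the six $I_2$ fibres each $S_i$ meets; each $S_i$ meets $C_k$ for exactly four values of $k$, consistent with height $0$ in Lemma \ref{cor:E2TorsionIFF}.

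\emph{Step 2: lattice translation.} Using $\sigma$ and the lemma on sets $\{u_\alpha\}\subset U^\perp$, the question becomes one in $-E_8$. We know six orthogonal roots $C_1,\dots,C_6$ and the three roots $\sigma(S_i)$, which pairwise have product $-1$ and have prescribed products with the $C_k$. I will look for a fifth root $u$ with $u\cdot\sigma(S_i)=-1$ for the chosen sections and $u\cdot C_k\in\{0,1\}$ in a prescribed pattern, so that the lemma yields a further section disjoint from the others. If necessary, I will choose as the five sections $S_0$, some of the torsion sections, and translates by a non-torsion section. The target pattern is dictated by the basic example. Four of the sections must each meet the non-$S_0$ component of one designated fibre, and these four fibre components must be otherwise untouched. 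The two remaining $I_2$ fibres will become the conic-line members. Because the Weyl group of $E_8$ acts transitively on suitable configurations (as recalled above for eight mutually $(-1)$ roots and for orthogonal triples), I expect this search to reduce to a finite combinatorial check in the standard model of $E_8$.

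\emph{Step 3: contraction and identification.} Contract first the four chosen sections meeting the designated components, then the images of those components (now $(-1)$-curves), and then the remaining sections, until nine contractions have been made. An intersection-number computation then shows three things: the result is $\bbP^2$; the fibre class maps to cubics; and the images of the two undesignated $I_2$ fibres are conic-plus-line, with the conics bitangent at the images of two designated points and each line tangent to the other conic. The last point follows because each line passes doubly through the appropriate infinitely near base point. This recovers exactly the bitangent conic-line pencil.

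The main obstacle is Step 2: exhibiting the required roots for \emph{every} special $I_2$ surface, not just a generic one. In particular, the needed fifth disjoint section must exist with the correct fibre-component incidences. I would first try to settle this by an explicit computation in coordinates, writing $U^\perp$ as $-E_8$ with $R$ and the three torsion classes placed in normal form, which is possible by Weyl transitivity. As a fallback, I would use the explicit three-section description of Proposition \ref{prop:three} to construct the curves geometrically.
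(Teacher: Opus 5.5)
Your reduction is the right one, and it is the paper's strategy: find on $\calX$ an exceptional configuration with the incidence pattern of a resolved bitangent conic-line pencil, then contract it. Step 1 is also correct. The gap is that the existence of that configuration on \emph{every} special $I_2$ surface is the entire content of the proposition, and Step 2 leaves it as an expectation.

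Worse, the search you actually set up has no solution. You propose to adjoin a fifth root $u$ to $S_0,\sigma(S_1),\sigma(S_2),\sigma(S_3)$. In the resolved basic example, each contracted $(-2)$-curve $E_x-E'_x$ ($x\in\{p_1,p_2,q_1,q_2\}$) meets exactly one of the five contracted sections. On $\calX$, by contrast, every component of every $I_2$ fibre meets exactly two of $S_0,S_1,S_2,S_3$: $C_{ijk}$ meets $S_i,S_j$, and $D_{ijk}$ meets $S_0,S_l$.

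A height count gives more. Take $E_r$ as zero:
\begin{itemize}
\item $E'_{p_i}$ meets the non-identity component of three $I_2$ fibres (the one over $p_i$ and the two conic-line members), so it has height $1/2$;
\item $E'_{q_j}$ meets the non-identity component of only two fibres, so it has height $1$.
\end{itemize}
For each pair of the five sections, count the fibres in which they hit different components. The count is $4$ only for the pair $\{E'_{q_1},E'_{q_2}\}$. So exactly one pair in such a configuration differs by a torsion element, and at most two of the four torsion sections can ever occur in it.

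What you really need are three pairwise disjoint \emph{non-torsion} sections, of heights $1/2,1/2,1$, with prescribed incidences. Your fallback (``translates by a non-torsion section'') does not say where these come from or why they are disjoint. The transitivity facts you cite, for mutually $(-1)$ octuples and for orthogonal triples, do not produce them either.

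The paper obtains these sections geometrically from $\bbF_2$:
\begin{itemize}
\item The $(+2)$-sections through $p_{12a},p_{13a},p_{23a}$ and through $p_{12a},p_{13a},p_{23b}$ meet $T$ in a double divisor. They therefore split, giving sections that each meet three $C$'s.
\item A $(+2)$-section through $p_{12a},p_{12b}$ tangent to $T_3$ lifts to $S_{12\alpha}$, which meets only $C_{12a}$ and $C_{12b}$.
\item After blowing up the nodes, these images meet only once. Hence one lift of each of the first two sections is disjoint from $S_{12\alpha}$.
\end{itemize}
The paper then contracts $S_0$ and the pairs $(S_{12\alpha},C_{12b})$, $(S_{12a,13a,23a},C_{23a})$, $(S_{12a,13a,23b},C_{23b})$. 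This leaves a surface of Picard number $3$ that still contains the untouched $(-2)$-curve $C_{13b}$. That forces the surface to be $\bbP^2$ blown up at two infinitely near points, which supplies the last double base point without ever locating the fifth section.

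That fifth section is a lift of a $(+2)$-section through $p_{13a},p_{13b}$ tangent to $T_2$. It differs from $S_{12\alpha}$ by a $2$-torsion section, consistent with the count above.
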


To prove this, we first need to describe $\calX$ starting from the double cover $g:Z \longrightarrow \bbF_2$, and then describe some sections of $\calX$ that will be relevant to us. \smallskip

Let us consider the six double  points of the branch curve $T$ of $g:Z \longrightarrow \bbF_2$, and let us denote them as 
$p_{ija}$ and $p_{ijb}$ for $i,j \in \{1,2,3\}$,
where  $T_i \cap T_j = \{p_{ija},p_{ijb}\}$.

We can obtain the smooth elliptic surface $\calX$ by resolving the singularities of $Z$, that are six points of type $A_1$ located over the double points of the branch curve $T$. To make this resolution, we
 blow up the six double points on $\bbF_2$ and normalize the base change of the double cover  $g:Z \longrightarrow \bbF_2$.
This creates  exceptional curves $\bar{C}_{ijk}$
($1 \leq i,j \leq 3$, $k \in \{a,b\}$)
and the proper transforms of the fibres $\bar{D}_{ijk}$.
They are  $(-1)$-curves in this six-fold blowup of $\bbF_2$.

Let us abuse notation and call the proper transforms of the $T_i$ on this blowup $T_i$ as well.
Then $T_i$ meets four of the $\bar{C}$'s,
namely $\bar{C}_{ijk}$ for $j \neq i$ and $k \in \{a,b\}$.

The normalization of the base change of the double cover is nothing but the double cover branched over $B+T_1+T_2+T_3$
(here the $T_i$'s are the proper transforms), and this is the surface $\calX$. 

Then  the fibre components $\bar{C}_{ijk}$ and $\bar{D}_{ijk}$
lift to the fibre components ${C}_{ijk}$ and ${D}_{ijk}$ for the corresponding fibre of $\calX$ (that are $(-2)$-curves on $\calX$);
they meet at two points, forming the $I_2$ fibre.
The curve
${C}_{ijk}$ does not meet $S_0$ and ${D}_{ijk}$ meets $S_0$.\smallskip

Next we want to describe the sections of $\calX$ that do not meet the zero section $S_0$.

Let $S$ be such a section of $\calX$.
Then $\pi(S)$ is a section of the ruling structure on $\bbF_2$,
which is disjoint from $B$;
hence it is a member of the  linear system $|B+2F|$.
However for such a section to lift to a section of $\calX$,
it must split in the double cover,
or be one of the $T_i$'s.
If it is not one of the $T_i$'s,
this forces $\pi(S)$ to meet $T_1+T_2+T_3$ in a divisor which is double.

There are several ways this can happen. Precisely: 
\begin{itemize}
\item[(a)] $\pi(S)=T_i$ for some $i$.
\item[(b)] $\pi(S)$ is different from all $T_i$ and passes through three of the double points.
\item[(c)] $\pi(S)$ is different from all $T_i$ and passes through two of the double points and is tangent to one of the $T_i$'s at one point.
\item[(d)] $\pi(S)$ is different from all $T_i$ and passes through one of the double points,
and is tangent to two of the $T_i$'s at one point each.
\item[(e)] $\pi(S)$ is different from all $T_i$ and is tangent to each $T_i$,
not passing through any of the double points.
\end{itemize}

In case (a), where $\pi(S) = T_i$,
$S$ is a section of $\calX$ of order two (by the same argument as in the proof of (ii) of Lemma \ref {cor:E2TorsionIFF}), and meets the four components $C_{ijk}$.  
These three are all disjoint (and disjoint from $S_0$).

In case (b), $\pi(S)$ cannot pass through $p_{ija}$ and $p_{ijb}$;
if it would, then it will already meet $T_i$ and $T_j$ twice,
and so cannot meet again.  However, the third double point will be contained in one of these, which is a contradiction. We conclude that
$\pi(S)$ must pass through $p_{12a/b},p_{13a/b},p_{23a/b}$;
there are then $8$ possible choices.
For each choice, $S$ and its conjugate section $\bar{S}$ are disjoint,
giving $16$ sections of $\calX$.
Both $S$ and $\bar{S}$ meet the corresponding three $C_{ija/b}$,
and do not meet the $D_{ija/b}$.

We will denote these sections by
$S_{12a,13a,23a}, S_{12a,13a,23b},\ldots,S_{12b,13b,23b}$
and their conjugates
$\bar{S}_{12a,13a,23a}, \bar{S}_{12a,13a,23b},\ldots,\bar{S}_{12b,13b,23b}$

In case (c), $\pi(S)$ must pass through $p_{ija}$ and $p_{ijb}$ for some pair $i,j$,
and is tangent to the third $T$.
Hence $S$ and $\bar{S}$ meet (over the point of tangency)
and both meet $C_{ija}, C_{ijb}$.
For each pair $i,j$, two such sections are tangent to the third.
Therefore on $\bbF_2$ there are $6$ total sections, lifting to $12$ sections on $\calX$.

We will denote these sections by
$S_{ij\alpha},S_{ij\beta}, \bar{S}_{ij\alpha},\bar{S}_{ij\beta}$ 
(the lifts of the two sections passing through $p_{ija},p_{ijb}$ and tangent to the third).

Case (d) is clearly not possible.

In Case (e), we have that $S$ and $\bar{S}$ meet three times,
and they both meet all $D_{ija/b}$'s.

We can record some of the above information in the following table;
the missing rows correspond to sections which have permutations of the subscripts.
\[
\begin{array}{c|c|c|c|c|c|c}
\text{section} & 12a & 12b & 13a & 13b & 23a & 23b \\
T_1 & C & C & C & C & D & D \\
T_2 & C & C & D & D & C & C \\
T_3 & D & D & C & C & C & C \\
S_{12a,13a,23a} \;\text{or}\; \bar{S}_{12a,13a,23a} & C & D & C & D & C & D \\
S_{12a,13a,23b} \;\text{or}\; \bar{S}_{12a,13a,23b} & C & D & C & D & D & C \\
S_{12\alpha} \;\text{or}\; \bar{S}_{12\alpha} & C & C & D & D & D & D \\
\bar{S}_{12\alpha} & C & C & D & D & D & D \\

\end{array}
\]

Now we can give the:

\begin{proof}[Proof of Proposition \ref {prop:cub}]
In the setup above, we start blowing $\calX$ down to the plane.
To do this, we must identify a series of nine $(-1)$-curves to blow down; we start with $S_0$.

Now consider the section $S_{12\alpha}$.
We note that $\pi(S_{12\alpha})$ meets $\pi(S_{12a,13a,23a})$ at only one point
after we blow up the six double points.
Hence in $\calX$, $S_{12\alpha}$ meets only one of the two lifts of $\pi(S_{12a,13a,23a})$
which are denoted by $S_{12a,13a,23a}$ and $\bar{S}_{12a,13a,23a}$.
Choose the one that does not meet $S_{12\alpha}$; we may re-index so that this is
$S_{12a,13a,23a}$.

Similarly, we see that $\pi(S_{12\alpha})$ meets $\pi(S_{12a,13a,23b})$ at only one point
after we blow up the six double points.
Hence in $\calX$, $S_{12\alpha}$ meets only one of the two lifts of $\pi(S_{12a,13a,23b})$
which are denoted by $S_{12a,13a,23b}$ and $\bar{S}_{12a,13a,23b}$.
Choose the one that does not meet $S_{12\alpha}$; we may re-index so that this is
$S_{12a,13a,23b}$.

At this point all three chosen sections 
$S_{12\alpha}$, $S_{12a,13a,23a}$, and $S_{12a,13a,23b}$
are pairwise disjoint.
Using the above table as a reference, we see that these three sections each meet
$C_{12a}$. In addition, among these three sections,
$C_{12b}$ only meets $S_{12\alpha}$,
$C_{23a}$ only meets $S_{12a,13a,23a}$, and
$C_{23b}$ only meets $S_{12a,13a,23b}$.
Therefore we can blow down
$S_{12\alpha}$ and $C_{12b}$,
$S_{12a,13a,23a}$ and $C_{23a}$,
$S_{12a,13a,23b}$ and $C_{23b}$,
to three infinitely near double base points.

We have now blown $\calX$ down $7$ times, to a surface $\calY$,
and have two more to go.
None of the curves so far blown down have met $C_{13b}$,
and therefore this curve persists in this surface as a $(-2)$-curve.
The surface $\calY$ does not have any curves of self-intersection less than $-2$ on it.
Therefore it is either a $2$-fold blowup of $\bbP^2$
(necessarily infinitely near since we have the $(-2)$-curve on it),
or a $1$-fold blowup of $\bbF_2$ (at a point not on the $(-2)$-section).
These two surfaces are abstractly isomorphic in fact,
and so $\calY$ is determined,
and the only negative curves on it form a chain of three curves
with self-intersections $(-1), (-1), (-2)$.

As noted above, the $(-2)$-curve is $C_{13b}$.
The $(-1)$-curve meeting it cannot be any of $C_{12a}, C_{13a},D_{12a},D_{13a}$,
since those four curves (the components of the $12a$ and $13a$ fibres of $\calX$)
started as disjoint from $C_{13b}$
and none of the curves that were blown down above met $C_{13b}$.

Hence, when we blow down this $(-1)$-curve, and then finally $C_{13b}$,
arriving at $\bbP^2$, all four of these components survive. The reader can easily check that they are the conic-line pairs that generate the pencil,
and the resulting pencil is a member of the bitangent conic-line pencil of cubics.\end{proof}

In fact, we have a better result:

\begin{theorem}\label{thm:cub}
Every cubic pencil that resolves to a RESS of special $I_2$ type
is projectively equivalent to a bitangent conic-line pencil of cubics.
\end{theorem}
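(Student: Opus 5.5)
Rather than applying a Cremona transformation, I would work directly with an arbitrary pencil $\calP$ that resolves to a RESS $\calX$ of special type $I_2$. The idea is to show that $\calP$ already contains two members that split as conic plus line, in the configuration of \S\ref{ssec:basic}. The structure of the plane model is encoded in the nine exceptional curves $E_1,\dots,E_9$ of the blow-down $\calX\to\bbP^2$, with $E_9=S_0$, together with the pullback $H$ of a line, where $3H-\sum E_i=F$. Each $E_i$ is a $(-1)$-curve, hence a section of $f$.

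First, find the reducible members of $\calP$ and locate the base points. Each $I_2$ fibre $C_k+D_k$ pushes down to a plane cubic, and there are two cases.
\begin{itemize}
\item If neither component is contracted, the cubic is reducible, of the form conic plus line: one component has class $H-\sum_{i\in I}E_i$ and the other $2H-\sum_{j\in J}E_j$, with $|I|=3$ and $|J|=6$. A cubic splitting into three lines is impossible, since each line would give a component, while the fibre has only two.
\item If one component is contracted, it must be an exceptional curve of the form $E_i-E_{i'}$, i.e.\ a $(-2)$-curve arising from infinitely near base points. In that case the fibre is a nodal cubic singular at a base point.
\end{itemize}
Since the $(-2)$-curves $E_i-E_{i'}$ form chains, the fibre components of this type are pairwise disjoint. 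They are also disjoint from $S_0$, so $E_9$ can be last in at most a chain. By the orthogonality of distinct fibres one gets at most $4$ such contracted components. I would carry out this count by working in the $E_8$ lattice $U^\perp$: the six classes $C_k$ are pairwise orthogonal roots, and I would check which root subsystems of type $A_1^6$ are compatible with an exceptional configuration.

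The conclusion I aim for is that exactly $2$ members are conic-line and the other $4$ are nodal at double base points. By lattice rank, six mutually orthogonal roots $C_k$ inside the lattice spanned by the $E_i-E_j$ and $H-E_i-E_j-E_k$ have to be compatible, which is exactly the counting that pins down the configuration.

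Next, identify the geometry of the conic-line pairs $Q_1+L_1$ and $Q_2+L_2$. Their intersection pattern must give the nine base points as two double points on $Q_1\cap Q_2$, two double points $L_1\cap Q_2$ and $L_2\cap Q_1$ (tangency), and $r=L_1\cap L_2$. I would read this off from the intersection numbers of the classes found in the first step: for instance, $Q_1\cdot Q_2=4$, concentrated at two infinitely near pairs. This is exactly the definition of a bitangent conic-line pencil, so no change of coordinates is even needed beyond noting that such configurations are what the definition describes. The alternative class assignments, such as a line through three simple base points, must be ruled out using the requirement that $\calX$ contains no $(-2)$-curves other than the $C_k,D_k$. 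Such curves would force worse fibres.

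The main obstacle is the first step: excluding other distributions of base points, for example fewer than four infinitely near pairs or a pencil with more conic-line members. I would try first to use the sections from the proof of Proposition~\ref{prop:cub}: any exceptional configuration corresponds to an orbit of $8$ disjoint sections, and a case check using the table of how the sections meet $C_{ijk}$ and $D_{ijk}$ should show that every such octuple produces the same pattern. That would give projective, not just Cremona, equivalence.
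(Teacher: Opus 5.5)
\textbf{Verdict: genuine gap.} The statement you are trying to prove is false as written, and the step you leave open is exactly where it fails.

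\textbf{What is correct.} Your first step is sound and is essentially the paper's second argument. A member that is not a conic plus a line must be a nodal cubic whose node is a double base point. The contracted $(-2)$-curves $E_i-E_{i'}$ coming from distinct fibres have disjoint index pairs. So there are at most four nodal members, hence at least two conic-line members. There are two slips, neither affecting the bound of four:
\begin{itemize}
\item When base points are infinitely near, the $E_i$ are not all irreducible $(-1)$-curves.
\item The contracted component may be $D_k=E_i-E_9$, which meets $S_0$.
\end{itemize}

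\textbf{Where it breaks.} The gap is the step you flag yourself: ``exactly two conic-line members and four nodal ones.'' It cannot be filled, because it is false. The paper passes over the same point with ``one can check.''

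\textbf{Counterexample.} Your proposed case check through the table of sections would in fact produce one. Take $T_1,T_2,T_3$, both lifts $S_{12a,13a,23a},\bar S_{12a,13a,23a}$, and suitably chosen lifts over $\pi(S_{12b,13a,23a})$, $\pi(S_{12a,13b,23a})$, $\pi(S_{12a,13a,23b})$. These are eight pairwise disjoint sections, all disjoint from $S_0$; this is a routine check with the roots $\sigma(S)\in U^\perp$. Contracting them together with $S_0$ gives $\bbP^2$ with nine \emph{simple} base points and no contracted fibre component. So all six $I_2$ fibres become conic-plus-line members.

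Explicitly, let $F_0=z\,(x^2+y^2-xz-yz)$ and $F_\infty=(x-y)\,(z^2-xz-yz+5xy)$, and consider the pencil $F_0+tF_\infty$.
\begin{itemize}
\item It has nine distinct base points: the vertices $(1:0:0)$, $(0:1:0)$, $(0:0:1)$, $(1:1:1)$ of a complete quadrangle, its diagonal points $(1:1:0)$, $(1:0:1)$, $(0:1:1)$, and $(3:-1:5)$, $(-1:3:5)$.
\item Its general member is smooth, since $F_0$ is smooth at every base point.
\item Each of the six sides $x=0$, $y=0$, $z=0$, $x=y$, $x=z$, $y=z$ contains three base points, so it is a component of a member. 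These members are $t=-1,\,1,\,0,\,\infty,\,\tfrac14,\,-\tfrac14$ respectively.
\item These six distinct reducible fibres already exhaust $e(\calX)=12$. Hence all are of type $I_2$ and there are no other singular fibres.
\end{itemize}
So this pencil resolves to a RESS of special type $I_2$. It has no infinitely near base points, so it is not projectively equivalent to a bitangent conic-line pencil.

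\textbf{The paper's first argument fails too.} Apply the quadratic transformation centred at $p_1,p_2,r$ to a bitangent conic-line pencil. It turns $p_1',p_2'$ into proper points and turns the nodal members at $p_1,p_2$ into conic-line members, so the image is not a bitangent conic-line pencil.

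\textbf{What can be salvaged.} Your method does prove Cremona equivalence (Proposition~\ref{prop:cub}) and the bound of at least two conic-line members. Projective equivalence needs an extra hypothesis, for example exactly four nodal members. Under that hypothesis your tangency analysis does force the bitangent configuration.
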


\begin {proof}Take such a pencil, and factor the Cremona transformation that takes it to a bitangent conic-line pencil into quadratic Cremona transformations based at three (of the nine) base points.  The reader can now easily check that any such quadratic Cremona transformation sends a bitangent conic-line pencil to another.  Hence, it must have started out as one.

A different proof goes as follows.
Consider the singular elements of the cubic pencil.
No element can consist of three lines; therefore, every singular element is either
a nodal cubic, or a conic plus a line meeting in two distinct points.
If a nodal cubic is a singular element, then since that singular element must resolve to an $I_2$ fibre, the node of the cubic must be a double base point of the pencil (a base point and an infinitely near base point).

Since there are only nine base points total, this means that there are at most four nodal cubics as singular elements.  Hence, there are at least two conic plus line members to the pencil.
One can check that the only way that leads to six $I_2$ fibres is the bitangent conic-line configuration. \end{proof}

 \subsection{The Weierstrass equation  viewpoint}\label{ssec:weir}
We have already observed above that trisection of the $\bbF_2$ that forms the
rest of the branch locus for the double cover splits as three sections in the system
$|B+2F|$.  Hence in the Weierstrass equation \eqref{Weq},
the right-hand side splits as a product of three `parabolas', as
\[
x^3+A(t)x+B(t) = (x-Q_1(t))(x-Q_2(t))(x-Q_3(t))
\]
where the three polynomials $Q_i$ are quadratic.
We must have $Q_3 = -Q_1-Q_2$ so that the cubic in $x$ has no square term.

In that case we can choose any two quadratics $Q_1(t)$ and $Q_2(t)$
(subject to the condition that the three meet in six distinct points in $\bbF_2$).
We have the computation that
\[
A(t) = -(Q_1^1+Q_2^2 + Q_1Q_2), \;\;\;\text{and}\;\;\; B(t) = Q_1Q_2(Q_1+Q_2).
\]
The discriminant is then computed as $D(t) = 4A^3+27B^2$, which in terms of the quadratics is
\[
D = -[(Q_1-Q_2)(Q_1+2Q_2)(2Q_1+Q_2)]^2
\]

This gives all Weierstrass equations for the six $I_2$ fibre case.

\subsection{The double cover of $\bbP^2$ representation} \label{sec:double}

Next, we look at the RESS as a double cover $\pi: Y\longrightarrow \bbP^2$ 
branched along a quartic curve $C$,
{where the elliptic pencil is pulled back from the pencil of lines through a point $p$,}
that may, or may not, belong to $C$ (either ramified or split model). 
We want to identify the pairs $(C,p)$ of special type $I_2$. 

\subsubsection{The possible configurations}

We start with the following.

 \begin{proposition}[{\cite[Theorem 1]{bannai2023ramified}}] \label{lema:num_bit}
 Let $C$ be a plane quartic with at worst nodes as singularities. Then, the number of concurrent bitangent lines of $C$ through a point $p \notin C$ is at most four.
 \end{proposition}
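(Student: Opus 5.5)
The plan is to translate the statement into the language of heights on the Mordell--Weil group, using the split model of \S\ref{ssec:models}, and to mimic the computation in Lemma~\ref{cor:E2TorsionIFF}(i), which there was done under the stronger assumption that the surface is of special type.

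\textbf{Setting up the surface.} Let $C$ be a plane quartic with at worst nodes and let $p\notin C$. Take the double plane $\pi:Y\to\bbP^2$ branched along $C$. It has only $A_1$ singularities over the nodes of $C$. Resolve these and blow up the two points of $\pi^{-1}(p)$. This gives a smooth rational surface $\calX$ with an elliptic fibration induced by the pencil of lines through $p$.
\begin{itemize}
\item The general fibre is the double cover of a line branched in four points, so it is a genus one curve.
\item The two exceptional curves over $p$ are disjoint sections $S_0$ and $S$.
\item Take $(\calX,S_0)$ as a RESS. Then $S\cdot S_0=0$.
\end{itemize}
I will use only the general form of the height,
$$\langle S,S\rangle = 2+2S\cdot S_0-\sum_v c_v(S) = 2-\sum_v c_v(S),$$
and the fact from \cite{shiodaMW} that the height pairing is positive semi-definite, so $\langle S,S\rangle\ge 0$. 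Here $c_v(S)\ge 0$ is the local correction at a reducible fibre $F_v$, and $c_v(S)=0$ exactly when $S$ meets the same component as $S_0$.

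\textbf{Reducible fibres from bitangents.} Let $L$ be a bitangent of $C$ through $p$. Then $L$ meets $C$ in a divisor of the form $2D$, so the double cover restricted to $L$ splits. Hence the fibre of $\calX$ over $L$ is reducible. It contains two components mapping isomorphically onto $L$, interchanged by the covering involution, possibly together with exceptional curves over nodes lying on $L$.

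Since $p\notin C$, the two preimages of $p$ lie on the two different lifts of $L$. So $S_0$ and $S$ meet distinct components of this fibre, and $c_v(S)>0$ there. Distinct bitangents give distinct fibres.

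\textbf{Bounding each local term.} The main point is a uniform lower bound $c_v(S)\ge \tfrac12$ whenever $S$ meets a non-identity component. I would check this on Shioda's table of local contributions:
\begin{itemize}
\item for $I_n$ with $S$ on component $i\neq 0$, the contribution is $i(n-i)/n\ge (n-1)/n\ge \tfrac12$;
\item for $III$ it is $\tfrac12$;
\item for $IV$ it is $\tfrac23$;
\item for $I_n^*$, $III^*$, $IV^*$ every nonzero value is at least $1$.
\end{itemize}
This covers every degenerate situation that can occur: a hyperflex line (type $III$), a tangent line through a node, or a line through two nodes (types $I_3$, $I_4$). I expect this case check to be the main obstacle, since one must make sure no fibre type allowed by the geometry gives a smaller correction. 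The uniform bound from Shioda's table avoids having to pin down the exact fibre type for each configuration.

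\textbf{Conclusion.} If $m$ is the number of bitangents through $p$, summing the local terms gives
$$0\le\langle S,S\rangle\le 2-\frac m2,$$
hence $m\le 4$.
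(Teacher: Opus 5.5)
Your argument is correct. It differs from the paper in that the paper gives no proof of this statement at all: it quotes it from \cite[Theorem 1]{bannai2023ramified} and only uses it to get $n\ge 2$ in Proposition \ref{prop:quarticI2}. What you supply is a self-contained proof in the paper's own language.

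Your argument is the height computation of Lemma \ref{cor:E2TorsionIFF}(i), freed from the special-type hypothesis. In that lemma every reducible fibre is $I_2$, so each bitangent contributes exactly $\tfrac12$ and $\langle S,S\rangle=2-\tfrac m2$. For an arbitrary nodal quartic you only get the inequality $\langle S,S\rangle\le 2-\tfrac m2$. That inequality suffices once Shioda's table guarantees that every nonzero local contribution is at least $\tfrac12$ and the other fibres contribute nonnegatively.

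Your route buys three things:
\begin{itemize}
\item independence from the external reference;
\item a slightly stronger conclusion, since your count covers every line through $p$ meeting $C$ in an even divisor (bitangents, hyperflex lines, lines through a node tangent elsewhere, lines through two nodes);
\item the fact that $m=4$ forces $\langle S,S\rangle=0$, so $S$ is torsion in general.
\end{itemize}

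Two points deserve to be stated explicitly rather than left implicit.
\begin{itemize}
\item Relative minimality of $\calX$. The resolution of the $A_1$ points is crepant and $K_Y=-\pi^*H$, so $-K_{\calX}=\pi^*H-S_0-S=F$. Hence every $(-1)$-curve meets $F$ once and none lies in a fibre, so $(\calX,S_0)$ is a RESS with $\chi=1$. This is what gives the constant $2$ in the height formula.
\item The separation of the sections. For a line $L$ through $p$ with $L\cdot C=2D$, write $f|_L=g^2$. Since $p\notin C$, $g(p)\neq 0$, so the lifts $w=\pm g$ pass through different points of $\pi^{-1}(p)$. Thus $S_0$ and $S$ meet two distinct components of that fibre, which is what forces a positive local term.
\end{itemize}
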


\begin{proposition}\label{prop:quarticI2} Let $C$ be a plane quartic and $p$ a point of $C$.  Let $ \Lambda_p$ be the pencil of lines through $p$. 
Then $(C,p)$ is of special type $I_2$ precisely when it belongs to one of the cases below. 

\begin{enumerate}[{Case} (i)]
  \item ($p\notin C$) [The split model] $C$ has (exactly) $n$ nodes as singularities, there are $m=6-n$ bitangent lines of $C$ in $\Lambda_p$, and any other line in $\Lambda_p$ intersects $C$ with multiplicity at most two. If a line in $\Lambda_p$ goes through a node of $C$, then it is not tangent to $C$ at any other point.
Moreover, the possibilities for $n$ and $m$ that can be realised (and are in fact realised)  are given in Table \ref{tab:possibilities}.  
\begin{table}[h!]
\centering
\begin{tabular}{ |c|c|c| } 
 \hline
  $(n,m)$ &  Normal form   & Descripition of nodal quartic $C$ \\ 
  \hline 
  $(2,4)$   & Proposition \ref{prop:special-2nodes} & Irreducible with $2$ nodes \\ 
  $(3,3)$ & Proposition \ref{prop:3nodes3bitNormalForm} & Irreducible with $3$ nodes\\
  $(4,2)$ & Proposition \ref{prop:4nodes2bitNormalForm} & Two irreducible conics ($4$ nodes)\\ 
  $(6,0)$ & Proposition \ref{prop:specialFourLines} & Four irreducible lines ($6$ nodes)\\
  \hline                                           
\end{tabular}
\smallskip
\caption{$(C,p)$ of special type $I_2$, $p \notin C$.}
\label{tab:possibilities}
\end{table}

 \item ($p\in C$) [The ramified model]  
    The line tangent to $C$ at $p$ is a simple tangent line, $C$ has (exactly) five nodes as singularities, and any other line in $\Lambda_p$ intersects $C$ with multiplicity at most two. In particular, $C$ is the union of an irreducible conic and two lines and $p$ lies on the conic.\\
\end{enumerate}
\end{proposition}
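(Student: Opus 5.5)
The plan is to translate the condition ``special type $I_2$'' into conditions on the lines of $\Lambda_p$ via Table \ref{table:possiblequartics}. Then I count singular fibres using the topological Euler characteristic, and finally realise the allowed numerical cases.

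\emph{Local analysis.} For a line $L\in\Lambda_p$, the fibre over $L$ is the double cover of $L$ (or of its proper transform) branched along $C\cap L$. It is a smooth genus one curve exactly when $L\cap C$ consists of four distinct points (suitably modified when $p\in C$). A double fibre of type $I_2$ arises only from:
\begin{itemize}
\item a bitangent through $p$;
\item a line through a node meeting $C$ there with multiplicity exactly $2$ and transversally elsewhere;
\item the simple tangent at $p\in C$.
\end{itemize}
A simple flex line through $p$ gives type $II$, which is forbidden. Every other degeneration gives a fibre of Euler number $\geq 3$ or a different Kodaira type, hence is excluded:
\begin{itemize}
\item worse singularities of $C$ (cusps, tacnodes, etc.);
\item tangent lines through $p$ at a node;
\item lines through two singular points, or through a node and tangent elsewhere;
\item hyperflex or flex tangents through $p$.
\end{itemize}
So the admissible configurations are exactly those in which every nonsimple line of $\Lambda_p$ is of one of the three listed kinds, and there are six of them in total. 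This proves the ``precisely when'' part, and shows that $C$ has only nodes.

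\emph{Split model, $p\notin C$.} The lines of $\Lambda_p$ through the nodes give $n$ fibres of type $I_2$. The remaining $I_2$ fibres must come from bitangents, so $m=6-n$. By Proposition \ref{lema:num_bit}, $m\leq 4$, hence $n\geq 2$.

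To cut down the pairs $(n,m)$, I would use the other simple tangents. The number of tangent lines from a general point to a nodal quartic is its class $12-2n$. Since all tangents from $p$ other than the bitangents must be absent (any simple tangent from $p$ gives an $I_1$-type fibre, i.e.\ an extra singular fibre of Euler number $1$), every tangent line from $p$ must be a bitangent. Counting each bitangent twice gives
\[
2m=12-2n,
\]
which is consistent with $m=6-n$. Hence the real restriction comes from geometry:
\begin{itemize}
\item $n=5$ forces $C$ to be a conic plus two lines or a cubic plus a line. For these I would check directly that $m=1$ is impossible, because the line components force extra tangencies or lines through two nodes.
\item $n=4$ with $C$ irreducible is impossible, since an irreducible quartic has at most $3$ nodes. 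So $C$ is two conics or a nodal cubic plus a line, and the latter is excluded similarly.
\item $n=6$ means four lines.
\item $n=2,3$ are irreducible.
\end{itemize}
Existence in each row is deferred to the normal forms of the cited propositions.

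\emph{Ramified model, $p\in C$.} The tangent at $p$ gives one $I_2$ fibre, which must be simple. Here the Euler-characteristic count changes: from $p$ the class drops by $2$, so $12-2n-2$ tangents remain, all of which must be absent or bitangents. But a bitangent through $p\in C$ would have intersection multiplicity at least $5$ with $C$, so none exist. Hence $12-2n-2=0$, giving $n=5$ nodes, and the five remaining $I_2$ fibres are the lines joining $p$ to the nodes.

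A quartic with five nodes is reducible into a conic plus two lines, or a cubic plus a line. The cubic-plus-line case is excluded: the cubic would need four nodes, or the line through $p$ would create a forbidden configuration. So $C$ is a conic plus two lines. The point $p$ cannot lie on a line component, since then that whole line lies in $\Lambda_p$. Therefore $p$ lies on the conic.

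\emph{Main obstacle.} The hard step is the exhaustive exclusion of the reducible possibilities, such as a nodal cubic plus a line or the mixed conic configurations, together with the careful identification of which degenerate lines give which Kodaira fibres. I would handle this by Euler-characteristic bookkeeping: the total must be $12$, with $\chi=2$ per $I_2$ fibre, and any non-allowed line contributes $\chi\neq 2$ or an $I_1$ fibre.
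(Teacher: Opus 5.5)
Your proposal is correct and follows essentially the paper's route: you identify which lines of $\Lambda_p$ give $I_2$ fibres, use the bound $m\le 4$ of Proposition~\ref{lema:num_bit} to get $n\ge 2$, and sort the nodal quartics by reducibility, while your class count $12-2n-2=0$ in the ramified case is a tidier form of the paper's observation that no bitangent can pass through $p\in C$, so the five remaining $I_2$ fibres must come from nodes. Two small slips to fix: a cubic plus a line with five nodes forces the cubic to have two nodes, not four, so it is a conic plus a line and not a separate case; and for $n=3$ a smooth cubic plus a transversal line also has three nodes, so ``$n=2,3$ are irreducible'' is not automatic but follows from your own count (class $6$, yet no bitangents, so $m=0\neq 3$).
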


\begin{proof}
Recall that the fibres of type $I_2$ arise either from bitangent lines in $\Lambda_p$ if $ p \notin C$ (Case (i)) or from lines in $\Lambda_p$ through nodes or from the (simple) tangent line to $C$ at $p$ if $p\in C$.   

Case (i). By Proposition \ref{lema:num_bit}, there are at most $4$ concurrent bitangent lines, so $ n \geq 2$. Since the pair 
$(C,p)$ is of special type $I_2$, one has $ n \leq 6$ and the  third  
column of Table \ref{tab:possibilities} follows  (an irreducible plane quartic  has at most $3$ nodes). The condition that any other line in $\Lambda_p$ intersects $C$ with multiplicity at most two guarantees that the principal tangents to $C$ at each of the $n$ nodes do not belong to $\Lambda_p$, and that $Y$ has at worst $A_1$ singularities.

 Case (ii). 
The tangent line at $p$ must be simple,  that is, it is not a bitangent, flex of hyperflex tangent because $(C,p)$ is special of type $I_2$. Then $C$ must have $5$ nodes, that is, $C$ is the union of an irreducible conic and two lines.
 \end{proof}
 
 Next, there are {five} cases to be considered. Either $(C,p)$ is as in Case (i) from Proposition \ref {prop:quarticI2} with $(n,m)\in\{(2,4),(3,3),(4,2), (6,0)\}$, or $(C,p)$ is as in Case (ii) therein. We will consider each of these separately.

\subsubsection{Quartics with two nodes  and four bitangents(Case (i) with $(n,m)=(2,4)$)}

 \begin{proposition}\label{prop:special-2nodes}
  If $C$ has (exactly) two nodes and no other singularity, then we can find coordinates $(x:y:z)$ in $\mathbb{P}^2$ such that $p=(0:0:1)$ and 
  \begin{enumerate}[(i)]
      \item  $C$ has an equation of the form
\begin{equation}\label{eq:wol}
    (ax+by)^2(cx+dy)^2+z^2(2q_2(x,y)+z^2)=0,
\end{equation}
where $q_2(x,y)$ is a general  homogeneous polynomial of degree two and $a,b,c,d$ are constants;
\item the  two nodes have coordinates $(-b:a:0)$ and $(-d:c:0)$;
\item  the four lines $\ell_i:\alpha_ix+\beta_iy=0$, $1\leq i\ \leq 4$, are bitangent to $C$ and concurrent at the point $p=(0:0:1)\notin C$, where 
$$q_2(x,y)^2-(ax+by)^2(cx+dy)^2=\prod_{i=1}^4(\alpha_ix+\beta_iy).$$
  \end{enumerate} 
  \end{proposition}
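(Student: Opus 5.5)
We place $p$ at $(0:0:1)$ and write the quartic in powers of $z$. The four concurrent bitangents will force the odd part in $z$ to vanish, and the two nodes will force the constant term to be a square. We then read off \eqref{eq:wol} and items (ii), (iii).

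\emph{Step 1: coordinates.} Since $p\notin C$, the coefficient of $z^4$ in the equation of $C$ is nonzero, so we normalise it to $1$. The translation $z\mapsto z+\lambda(x,y)$, with $\lambda$ a suitable linear form, fixes $p$ and removes the $z^3$ term. The equation then becomes
\[
z^4+f_2(x,y)z^2+f_3(x,y)z+f_4(x,y)=0,
\]
with $f_i$ binary forms of degree $i$.

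\emph{Step 2: the bitangents kill $f_3$.} Each line $\ell$ through $p$ is $\{\beta x-\alpha y=0\}$, parametrised by $(\alpha s:\beta s:z)$. On $\ell$ the equation restricts to the binary quartic $z^4+f_2(\alpha,\beta)s^2z^2+f_3(\alpha,\beta)s^3z+f_4(\alpha,\beta)s^4$ in $(s,z)$. The line $\ell$ is a bitangent (and passes through no node, by the conditions of Case (i)) exactly when this quartic is a perfect square $(z^2+\mu sz+\nu s^2)^2$. Since there is no $z^3$ term, $\mu=0$, so the coefficient of $s^3z$ must vanish, i.e. $f_3(\alpha,\beta)=0$. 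By hypothesis (Proposition \ref{prop:quarticI2}, $m=4$) there are four distinct such lines. The cubic form $f_3$ therefore has four distinct zeros on $\bbP^1$, so $f_3\equiv 0$. Setting $q_2=f_2/2$, the equation becomes $(z^2+q_2)^2=q_2^2-f_4$. The perfect-square condition on a line is now exactly $q_2^2-f_4=0$ at $(\alpha,\beta)$. So the four bitangents are the four linear factors of $\Delta:=q_2^2-f_4$, which are distinct.

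\emph{Step 3: locating the nodes.} The singular points satisfy $\partial_z=4z(z^2+q_2)=0$, together with the vanishing of the $x,y$-partials and of the equation itself.
\begin{itemize}
\item If $z^2=-q_2$ and $z\neq 0$, the equation gives $\Delta(x,y)=0$. The remaining partials reduce to $\partial\Delta=0$ (using $z^2=-q_2$), so $\Delta$ would have a multiple root, which is a contradiction.
\item If $z=0$, we need $f_4$ and its partials to vanish at $(x:y)$, i.e. a multiple root of $f_4$.
\end{itemize}
So both nodes lie on $z=0$ and correspond to multiple roots of $f_4$. Two distinct nodes force two distinct double roots, so $f_4=\kappa(ax+by)^2(cx+dy)^2$. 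The constant $\kappa$ is absorbed by rescaling the linear forms, since we work over $\bbC$.

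This yields \eqref{eq:wol}, the nodes $(-b:a:0)$ and $(-d:c:0)$ in (ii), and the factorisation of $q_2^2-(ax+by)^2(cx+dy)^2$ into the four bitangents in (iii). Genericity of $q_2$ amounts to the open conditions that $\Delta$ has distinct roots and that the nodes are ordinary and not otherwise degenerate. A converse check shows that every such equation has these properties.

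The step I expect to need the most care is Step 3: verifying that the branch $z^2=-q_2$ yields no singularities requires the Euler-relation bookkeeping between the partials of $q_2^2$ and $f_4$. The rest is direct.
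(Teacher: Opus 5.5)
Your proof is correct. It takes a genuinely different route from the paper's.

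\textbf{The paper's route.} The paper argues conceptually. Since $m=4$, Lemma \ref{cor:E2TorsionIFF} makes the second section $S$ over $p$ a $2$-torsion section. Translation by $S$ gives an involution of $\calX$, which descends to a projective involution of $\bbP^2$ fixing $p$ and preserving $C$. Following Kuwata, this involution is normalised to $(x:y:z)\mapsto(x:y:-z)$. The equation is then even in $z$, and completing the square gives $(z^2+q_2)^2=\prod(\alpha_ix+\beta_iy)$. The nodes are placed on $z=0$ by a fibre-type argument: a node not fixed by $\tau$ would be swapped with the other node. The line joining them would then pass through $p$ and produce an $I_4$ fibre.

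\textbf{Your route.} You work directly on the equation.
\begin{itemize}
\item The shift $z\mapsto z+\lambda(x,y)$ removes the $z^3$ term.
\item Each bitangent direction is then a zero of the binary cubic $f_3$, so four bitangents force $f_3\equiv 0$.
\item You compute the singular locus directly from $F=(z^2+q_2)^2-\Delta$.
\end{itemize}
The step you worried about is immediate. At $z^2=-q_2$ one has $F=-\Delta$, $\partial_xF=-\partial_x\Delta$ and $\partial_yF=-\partial_y\Delta$, so no Euler-relation bookkeeping is needed.

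\textbf{Small points to tighten.}
\begin{itemize}
\item In Step 2 you only need the implication ``bitangent $\Rightarrow$ perfect square''. The ``exactly when'' overstates it: hyperflex lines or a line through both nodes also restrict to squares, although Case (i) excludes them.
\item You should say explicitly why $\Delta\not\equiv 0$ and $\kappa\neq 0$. Either one would make $C$ non-reduced, containing a double conic or the double line $z=0$.
\end{itemize}

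\textbf{What each approach buys.} Your argument is elementary and self-contained. It avoids heights, the Mordell--Weil torsion machinery, and the descent of the translation to a linear involution of the plane, which the paper passes over quickly. It recovers the symmetry $z\mapsto -z$ as a consequence rather than an input. Steps 1--2 use neither the node count nor the special type, so they show that every reduced quartic with four bitangents concurrent at $p\notin C$ has the even form. A fifth bitangent would force $\Delta\equiv 0$, so the same argument also recovers the bound of Proposition \ref{lema:num_bit} in this setting. The paper's route explains where the symmetry comes from: it is the $2$-torsion translation. It also ties the normal form to the lattice-theoretic picture of Section \ref{NSMW}.
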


\begin{proof}
Let $f:\calX \longrightarrow \bbP^1$ be the RESS associated to $(C,p)$, 
and consider the two disjoint sections $S_0,S$ contracted at $p$. 
Take $S_0$ as the zero element on $\MW$. 
Then $S$ is a 2-torsion section (see Lemma \ref{cor:E2TorsionIFF}). 
Such a torsion section induces an involution $\tau$ on $\calX$, 
given by translation by $\tau$ on each fibre, 
and $\tau$ leaves the ramification curve globally invariant. 
So there is an involution on $\bbP^2$, that we also denote by $\tau$, 
that leaves $p$ and $C$ invariant.
Now we imitate the proof in  \cite[Th.~5.1]  {kuwata}. 
Namely, we can choose coordinates in $\bbP^2$ 
such that the involution is given by  $(x:y:z)\mapsto (x:y:-z)$ and 
$p=(0,0,1)\not\in C$. 
Since $C$ is fixed by the involution, 
its equation does not contain $z$ with an odd power, so the equation of $C$ is of the form
$$
rz^4+z^2 p_2(x,y)+p_4(x,y)=0
$$
with $r$ a constant and $p_i(x,y)$ forms of degree $i$, for $i=2,4$. 
Now notice that $r\neq 0$, since otherwise $p$ would be a point of $C$. 
Then we may assume $r=1$. 
By completing a square with respect to $z^2$, we can transform it to the form 
\[
(z^2+q_2(x,y))^2-\prod_{i=1}^4(\alpha_ix+\beta_iy)=0.
\]

Now, the two nodes must be fixed under the involution $\tau$, 
otherwise the line joining them would go through $p$ 
and the singular fibres would be of type $I_4$. 
The nodes must have coordinates of the form $(-b:a:0)$ and $(-d:c:0)$ and
\[
 q_2(x,y)^2-\prod_{i=1}^4(\alpha_ix+\beta_iy)=(ax+by)^2(cx+dy)^2. 
\]
 \end{proof}
 
 \begin{remark}\label{remark_twonodes0} 
Up to projective transformations, we can assume that the two nodes 
as in (ii) of Proposition \ref {prop:special-2nodes} are the points $(1:0:0)$ and $(0:1:0)$, 
i.e., that $a=d=0$ and $b\neq 0\neq c$. 
So \eqref {eq:wol} takes the form
$$
hx^2y^2+z^2(2(\lambda x^2+\mu xy+\nu y^2)+z^2)=0
$$
with $h\neq 0$. 
Note that $\lambda\neq 0\neq \nu$. 
Indeed, intersecting with the line $x=0$, we find the equation $z^2(2\nu y^2+z^2)$, which must have two distinct roots besides the double root $z=0$ corresponding to the double point. 
This shows that $\nu\neq 0$, and similarly $\lambda \neq 0$. 

At this point, we can still act with the diagonal affinities
$$
x\mapsto \alpha x, \quad y\mapsto \beta y, \quad \alpha\neq 0\neq \beta
$$
and impose that $\lambda=\nu=1$, so that equation \eqref {eq:wol}  takes the normal form
\begin{equation}\label{(2,4)}
hx^2y^2+z^2(2(x^2+k xy+y^2)+z^2)=0.
\end{equation}
\end{remark}

\begin{remark}\label{remark_twonodes}
We observe that the binodal quartic in Proposition (\ref{prop:special-2nodes}) 
has geometric genus $1$ 
and therefore can be described as the curve obtained by projecting from a point 
the intersection of two quadrics in $\bbP^3$. 
More precisely, we consider the point $(0:0:0:1)\in \bbP^3$ 
with coordinates $(x:y:z:w)$ as the center of the projection and the quadrics
\[
zw=(ax+by)(cx+dy)
\]
and 
\[
w^2+zw+2q_2(x,y)+z^2-(ax+by)(cx+dy)=0.
\]
\end{remark}

{ We have also a geometric description of the projection:}

Let $E \subset \mathbb P^3$  be a smooth quartic elliptic curve. Consider its dual surface $E^* \subset (\mathbb P^3)^*$ (that is, an elliptic scroll),
and the irreducible curve 
$C \subset E^*$ of its double (nodal, not cuspidal) points. This curve $C$ parametrises the bitangent planes to $E$. The curve $C$ has finitely many 4-secant lines $\ell$. Now each such line $\ell$ corresponds to a pencil $P_\ell$ of planes in $\mathbb P^3$, 
cutting on $E$ a $g^1_4$ with exactly four divisors of type $2x + 2y$. These correspond to the four planes
of the pencil $P_\ell$ which are bitangent to $E$. Finally, let $r_\ell \subset \mathbb P^3$ be the line which is the axis of the pencil of planes $P_\ell$.
Projecting $E$ to $\mathbb P^2$ from a (general) point $q\in r_\ell$, we obtain a plane quartic curve $E_q$, birational to $E$,  which is 2-nodal and has four concurrent bitangent lines, i.e., the images of the four bitangent planes to $C$ in $P_\ell$, under the projection from q.

As usual in genus one, the choice of $\mathcal O_E(1)$ does not matter, via the action of ${\rm Aut}(E)$. So, as we already saw, there are two moduli: one
is for $E$ and one for a point moving on the axis $r_\ell$. Also, the moduli space is a ruled surface living in the Hurwitz scheme $\mathcal H_{1,4}$.

\subsubsection{Quartics with three nodes  and three bitangents (Case (i) with $(n,m)=(3,3)$)}

   \begin{lemma}[{\cite{hilton}, Chapter XVII, Sect. 3}]\label{lema:3nodes}
      An irreducible plane quartic that has three nodes has exactly four bitangent lines.
     Its equation can be chosen to be 
     \begin{equation}\label{eq:normal_3nodes}
     (u^2-bcx^2-cay^2-abz^2)^2=(u-u_1)(u-u_2)(u-u_3)(u-u_4),
     \end{equation}
     where $u=fx+gy+hz$ and 
     \begin{align*}
         u_1 &\coloneqq \sqrt{bc}x+\sqrt{ca}y+\sqrt{ab}z, \\
         u_2 & \coloneqq \sqrt{bc}x-\sqrt{ca}y-\sqrt{ab}z,\\
         u_3 & \coloneqq -\sqrt{bc}x+\sqrt{ca}y-\sqrt{ab}z, \text{and}\\
         u_4 & \coloneqq -\sqrt{bc}x-\sqrt{ca}y+\sqrt{ab}z.
     \end{align*}
   The bitangents to $C$ are the four lines given by $u-u_i=0$ (with $1\leq i\leq 4$). 
     \end{lemma}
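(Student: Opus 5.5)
The plan is to reduce to a standard model by a quadratic Cremona transformation centred at the three nodes, and then pull back the classical bitangent normal form of a conic. Let $C$ be an irreducible quartic whose only singularities are three nodes $n_1,n_2,n_3$. These nodes are not collinear, since otherwise the line through them would meet $C$ with multiplicity at least $6>4$. So we may choose coordinates with the nodes at the vertices of the coordinate triangle.

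The standard quadratic transformation $\phi:(x:y:z)\mapsto(yz:zx:xy)$ sends $C$ to its proper transform $\Gamma$. Since $C$ has degree $4$ and multiplicity $2$ at each of the three base points, $\Gamma$ has degree $2\cdot 4-2-2-2=2$. It is an irreducible conic, and it meets each side of the reference triangle in the two points corresponding to the tangent directions of $C$ at the opposite node.

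Next we count bitangents. A line $\ell$ not through any node is mapped by $\phi$ to a conic $\phi(\ell)$ through the three vertices $e_1,e_2,e_3$. Therefore $\ell$ is bitangent to $C$ exactly when $\phi(\ell)$ is a conic through $e_1,e_2,e_3$ that is bitangent to $\Gamma$, i.e. tangent to $\Gamma$ at two points. We parametrise such conics as $\Gamma+\lambda m^2$, with $m$ a line; indeed $\Gamma+\lambda m^2$ is the general conic meeting $\Gamma$ doubly at $\Gamma\cap m$. The conditions at the three vertices are three linear conditions in $\lambda m^2$, where $m$ has $3$ homogeneous coefficients.

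Write $\Gamma$ in the diagonal-free normalised form. Solving gives $\lambda m(e_i)^2=-\Gamma(e_i)$. Up to the global sign of $m$ this yields the sign choices $m=\sqrt{-\Gamma(e_1)/\lambda}\,X\pm\cdots$, so exactly $2^3/2=4$ solutions. We also check that the lines through nodes, and the lines of the triangle, do not give extra bitangents. Lines through a node become lines through a vertex, and tangency there corresponds to the tangent at the node, which has contact $3$ only in the flecnodal case. In a generic or irreducible situation this contributes only flecnodal tangents, not bitangents. This gives exactly four bitangents.

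For the equation we work directly, imitating Hilton. The family of conics $u^2-bcx^2-cay^2-abz^2$ is the general conic in a suitable normalisation. The quartic $(u^2-Q)^2-\prod(u-u_i)$ visibly has each line $u=u_i$ as a bitangent, because on $u=u_i$ the equation becomes a perfect square. We then verify, by expanding $\prod(u-u_i)$ with the symmetric identities of the $u_i$ (their sum is $0$, etc.), that the quartic is singular exactly at $(1:0:0),(0:1:0),(0:0:1)$. Finally, a parameter count shows the form is general: the three-nodal quartics with nodes at the vertices form a $14-9=5$-parameter family, modulo the diagonal torus acting, and the parameters $f,g,h,a,b,c$ up to scaling give $5$.

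The main obstacle is this last verification, namely that the displayed polynomial has nodes precisely at the coordinate vertices and that every three-nodal quartic arises this way. I would attempt it by checking that every monomial of degree greater than $2$ in a single variable cancels. Concretely, the coefficients of $x^4$ and $x^3y,x^3z$ vanish because $\prod_i(f-\epsilon_i\sqrt{bc})$ matches $(f^2-bc)^2$ to the needed order. Failing that, I would simply cite Hilton as the paper does.
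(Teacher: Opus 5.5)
Your bitangent count is correct and follows a different route from the paper. The normal-form part, however, has a gap.

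\textbf{The bitangent count.} The paper's sketch only places the nodes at the coordinate points, writes $C$ as $F:=ay^2z^2+bz^2x^2+cx^2y^2+2xyz(fx+gy+hz)=0$, and then ``substitutes''. The word \emph{exactly} is left to Hilton (or Pl\"ucker). Your quadratic transformation genuinely proves exactness, and it is even explicit. One has $F(YZ,ZX,XY)=X^2Y^2Z^2\,\Gamma$ with $\Gamma=aX^2+bY^2+cZ^2+2fYZ+2gZX+2hXY$. Your four solutions $\Gamma-m^2$, with $m=\sqrt a\,X\pm\sqrt b\,Y\pm\sqrt c\,Z$, pull back exactly to the four lines $u=u_i$ (taking $\sqrt{bc}=\sqrt b\sqrt c$, etc.).

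Three steps should be stated rather than assumed:
\begin{enumerate}
\item $C$ meets each side of the triangle only at its two nodes ($F|_{x=0}=ay^2z^2$). Hence the tangency points of a line avoiding the nodes lie where $\phi$ is an isomorphism.
\item Each conic $\Gamma+\lambda m^2$ through $e_1,e_2,e_3$ is irreducible. A reducible one would contain a side, which forces $\Gamma$ to be tangent to that side, i.e.\ a cusp ($h^2=ab$, etc.).
\item Through each node there are two lines tangent to $C$ elsewhere (the tangents from $e_i$ to $\Gamma$). These are excluded by the definition of bitangent, not by your flecnode remark. That remark is garbled: a nodal tangent always has contact at least $3$ at the node.
\end{enumerate}

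\textbf{The gap: the normal form.} A parameter count ($5=5$) cannot show that \emph{every} irreducible three-nodal quartic has the form \eqref{eq:normal_3nodes}. At most it suggests that a dense subset does, and even that needs dominance of the parameter map. Likewise, checking that $x^4,x^3y,\dots$ cancel only places the displayed quartic in the span of the six surviving monomials; it does not say which member it is.

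What is needed is the exact identity behind the paper's ``substitution'':
\[
(u^2-bcx^2-cay^2-abz^2)^2-\prod_{i=1}^4(u-u_i)=4abc\,F,
\]
with the roots chosen so that $\sqrt{bc}\,\sqrt{ca}\,\sqrt{ab}=abc$. Writing $\alpha=\sqrt{bc}\,x$, $\beta=\sqrt{ca}\,y$, $\gamma=\sqrt{ab}\,z$, it follows by pairing the factors:
\begin{align*}
\prod_{i=1}^4(u-u_i)&=\bigl[(u-\alpha)^2-(\beta+\gamma)^2\bigr]\bigl[(u+\alpha)^2-(\beta-\gamma)^2\bigr]\\
&=(u^2-\alpha^2-\beta^2-\gamma^2)^2-4(\alpha^2\beta^2+\beta^2\gamma^2+\gamma^2\alpha^2)-8\alpha\beta\gamma\,u.
\end{align*}
Every quartic with nodes at the coordinate points is of the form $F$. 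Irreducibility forces $abc\neq0$, since $a=0$ makes $x=0$ a component, and similarly for $b,c$. So $a,b,c,f,g,h$ are literally the coefficients of $C$, and no genericity or parameter count is needed.

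Finally, $u-u_i$ vanishes at $(1:0:0)$ only if $f^2=bc$, which is the cusp condition there; the same holds at the other vertices. So the four lines $u=u_i$ avoid the nodes. The identity shows they are bitangent, since on each of them $4abc\,F$ restricts to a square. Your count shows there are no others.
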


     \begin{proof} The proof can be found in \cite[l.c.]{hilton}. 
However, we sketch it for the reader's convenience. 
We first observe that we can choose coordinates in $\bbP^2$  
such that the three nodes are the three coordinate points. 
Thus, $C$ has an equation of the form
     \[
     ay^2z^2+bz^2x^2+cx^2y^2+2xyz(fx+gy+hz)=0.
     \]
    The claim follows from the substitutions in the statement.
\end{proof}

\begin{proposition}\label{prop:3nodes3bitNormalForm}
    If $(C,p)$ is special of type $I_2$ and $C$ is irreducible with three nodes, 
then there exist coordinates in $\bbP^2$ such that $C$ is given by an equation of the form:
    \begin{equation}\label{eq:wer}
         ay^2z^2+bz^2x^2+cx^2y^2+2xyz(fx+gy+hz)=0.
    \end{equation}
with   $(a,b,c,f,g,h)$  such that ${\rm rank} (A)=2$, where
  \begin{equation}\label{eq:rew}
  A=\begin{pmatrix}
     f-\sqrt{bc} & g-\sqrt{ca} & h-\sqrt{ab}\\
      f-\sqrt{bc} & g+\sqrt{ca} & h+\sqrt{ab}\\
 f+\sqrt{bc} & g-\sqrt{ca} & h+\sqrt{ab}    
    \end{pmatrix}.
        \end{equation}
        Then $p$ has coordinates $(x_0,y_0, z_0)$ given by the kernel of $A$. 
\end{proposition}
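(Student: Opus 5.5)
By Proposition \ref{prop:quarticI2}, Case (i) with $(n,m)=(3,3)$, the pair $(C,p)$ is special of type $I_2$ exactly when three conditions hold. First, $p\notin C$. Second, exactly three of the bitangents of $C$ pass through $p$. Third, the remaining lines of $\Lambda_p$ behave generically: lines through a node are not tangent elsewhere, and no line through $p$ is a principal tangent at a node or a flex tangent. So the plan is to reduce the statement to a linear-algebra condition on the bitangents of the three-nodal quartic.

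First I place the three nodes at the coordinate points. By the proof of Lemma \ref{lema:3nodes}, $C$ then has equation \eqref{eq:wer}. The same lemma gives the four bitangents of $C$ as the lines $u-u_i=0$, $i=1,\dots,4$, with $u=fx+gy+hz$. Written out, these are the linear forms
\[
(f\mp\sqrt{bc})x+(g\mp\sqrt{ca})y+(h\mp\sqrt{ab})z,
\]
with the sign patterns of $u_1,\dots,u_4$. The rows of $A$ in \eqref{eq:rew} should be the coefficient vectors of three of these bitangents. A careful sign check is needed here to match each row with the right $u_i$; the choice of square roots can be adjusted so that the three chosen ones are exactly the rows of $A$. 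Since the nodes sit at the coordinate points, permuting the coordinates and the sign choices of the square roots permutes the $u_i$. Hence relabeling lets me assume the three bitangents through $p$ are any prescribed three of them, namely those giving the rows of $A$.

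Next, three distinct lines are concurrent if and only if their coefficient matrix has rank $2$, and then the common point spans the kernel. It has rank exactly $2$, not $\le 1$, because the bitangents are distinct lines. So the condition ``three bitangents through $p$'' becomes ${\rm rank}(A)=2$, and $p=(x_0:y_0:z_0)$ spans $\ker A$. Lemma \ref{cor:E2TorsionIFF} and Proposition \ref{lema:num_bit} show that $p$ cannot also lie on the fourth bitangent. If it did, $p$ would carry four bitangents, which Lemma \ref{cor:E2TorsionIFF} shows forces a 2-torsion section. That would give $m=4$ and hence the configuration $(2,4)$ rather than $(3,3)$, since six $I_2$ fibres with three nodes force $m=3$.

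The main obstacle is the converse, together with checking that the genericity conditions are automatic or harmless. One must see that $p\notin C$: I would verify this by noting that a point of $C$ lying on three bitangents would make the tangency count on each such line exceed $4$. The remaining open conditions hold for general parameters satisfying the rank condition. Since the statement only asserts the existence of coordinates and the form of $p$, I expect only the forward direction plus these consistency remarks to be needed.
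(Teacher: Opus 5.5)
Your proposal is correct and follows the paper's own argument: put the nodes at the coordinate points, use Hilton's description of the four bitangents as $u-u_i=0$ (the rows of $A$ are exactly the coefficient vectors of $u-u_1,u-u_2,u-u_3$), and observe that three distinct lines are concurrent if and only if ${\rm rank}(A)=2$, with $p$ spanning $\ker A$. The relabeling step needs more care: flipping a single square root breaks the identity \eqref{eq:normal_3nodes}, which requires $\sqrt{bc}\sqrt{ca}\sqrt{ab}=abc$; flipping two of them at once does preserve it and permutes $u_1,\dots,u_4$ by a transitive Klein four-group, so you can indeed arrange that the bitangent not through $p$ is $u-u_4$.
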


\begin{proof}
This follows from the proof of Lemma \ref{lema:3nodes}. 
With the same notation, we see that equation \eqref{eq:normal_3nodes} shows that 
we can construct a trinodal quartic $C$ with three bitangent lines concurrent at a point $p$ 
if and only if we can find $(a,b,c,f,g,h)\in \mathbb C^6$  as above.  
The point $p$ is such that its homogeneous coordinates are given by $\ker(A)$, 
and the pair $(C,p)$ will be special provided the lines $u-u_i$ (with $1\leq i\leq 4$) 
do not go through the three nodes.
\end{proof}
     
In particular, we can produce concrete examples of special pairs $(C,p)$ 
by choosing a tuple $(a,b,c,f,g,h)$ with $a=b=c=1$, $f+g=h+1$ and  $f,g,h\neq 1$.

\begin{example}
Let $q(x,y,z)=-x^2-y^2-z^2+(2x+3y+4z)^2$, $l_1(x,y,z)=x+2y+3z$, $l_2(x,y,z)=3x+2y+5z$,  $l_3(x,y,z)=x+4y+5z$ and $l_4(x,y,z)=3x+4y+3z$ 
and consider the plane quartic given by $q^2=l_1l_2l_3l_4$. 
This quartic has three nodes at the points $(0:0:1),(0:1:0)$ and $(0:0:1)$. 
Moreover, the three bitangent lines $l_i=0$, with $1\leq i\leq 3$, 
are concurrent at the point with coordinates $(-1:-1:1)$, 
and they do not go through any of the three nodes.
\end{example}

\begin{remark}\label{rem:abc} 
One can reduce the parameters on which the equation \eqref {eq:wer} depends in the following way. 
First of all, \eqref {eq:wer} depends on six homogeneous parameters $(a,b,c,f,g,h)$, so it actually depends on five parameters by making $a=1$. 
Then we can impose the kernel of the matrix $A$ in \eqref {eq:rew}, {that is, the point $p$, } to be $(1:1:1)$. 
This imposes three linearly independent relations between the parameters $(1,b,c,f,g,h)$, reducing them to two.  
\end{remark}

\subsubsection{Quartics with four nodes and two bitangents (Case (i) with $(n,m)=(4,2)$)}

\begin{proposition}[Two irreducible conics]\label{prop:4nodes2bitNormalForm}
If $(C,p)$ is special of type $I_2$ and $C$ has four nodes, then we can find coordinates
$(x : y : z)$ in $\bbP^2$ such that $p=(0:0:1)$ and $C$ has an equation of the form
\begin{equation}\label{eq:xy} 
(xy+az^2)(xy+b(x+y-z)^2)=0
\end{equation}
with $a\neq 0\neq  b$ such that the two conics $xy+az^2=0$ and $xy+b(x+y-z)^2=0$ 
intersect in four distinct points. 
\end{proposition}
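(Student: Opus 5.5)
The plan is to reduce the statement to elementary projective geometry of a pair of conics. By Proposition \ref{prop:quarticI2}, Case (i) with $(n,m)=(4,2)$, we know three things: $C=Q_1\cup Q_2$ is a union of two smooth conics meeting transversally in four distinct points, $p\notin C$, and exactly two bitangent lines of $C$ pass through $p$.

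First I identify these bitangents. A line cannot be tangent to a smooth conic at two distinct points. It also cannot be tangent to a conic at a node of $C$, since Case (i) forbids principal tangents at the nodes through $p$. So a bitangent line of $Q_1\cup Q_2$ is a common tangent line of $Q_1$ and $Q_2$, touching each conic at a smooth point of $C$. Hence the two bitangents through $p$ are two distinct common tangents $L_1,L_2$ of $Q_1$ and $Q_2$, both passing through $p$.

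Next I choose coordinates with $p=(0:0:1)$, $L_1=\{x=0\}$ and $L_2=\{y=0\}$. The key classical fact is that a conic tangent to both lines $x=0$ and $y=0$ has an equation of the form $\lambda xy+\ell^2=0$, where $\ell$ is a linear form defining the chord of contact. To see this, restrict the conic to $x=0$ and to $y=0$: each restriction must be a perfect square. Comparing coefficients then forces the remaining part to be a square $\ell^2$ up to the $xy$ term. The conic is smooth exactly when $\lambda\neq 0$ and $\ell\notin\langle x,y\rangle$.

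So I write $Q_i:\ xy+c_i\ell_i^2=0$. Since $p\notin Q_i$, we have $\ell_i(p)\neq 0$, so each $\ell_i$ has nonzero $z$-coefficient. I then use the transformations fixing $p$ and the two lines, namely $x\mapsto \alpha x$, $y\mapsto\beta y$, $z\mapsto \gamma z+\delta x+\varepsilon y$. A suitable choice of the $z$-substitution makes $\ell_1=z$.

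Now write $\ell_2=ux+vy+wz$ with $w\neq 0$. The step needing care is to show $u\neq0\neq v$. The conic $Q_1$ touches $y=0$ at $(1:0:0)$, since there $xy=0$ and $z=0$. The conic $Q_2$ touches $y=0$ at the point where $\ell_2=y=0$. If $u=0$, this point is also $(1:0:0)$. Then $Q_1$ and $Q_2$ would be tangent there, giving a non-nodal singularity of $C$, a contradiction. Symmetrically $v\neq 0$.

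Finally I rescale. Put $x'=ux$, $y'=vy$, $z'=-wz$. Then $\ell_2$ becomes $x'+y'-z'$ up to sign, $\ell_1$ becomes a multiple of $z'$, and $xy$ becomes a multiple of $x'y'$. The scalars are absorbed into $a=c_1(\dots)$ and $b=c_2(\dots)$, which gives equation \eqref{eq:xy}. Both $a$ and $b$ are nonzero by smoothness of the $Q_i$, and the four intersection points are distinct because the nodes of $C$ are. The only genuinely delicate point is the identification of bitangents with common tangents together with the non-degeneracy $u,v\neq0$. The rest is a normalization by the torus and affine maps fixing $p$ and the two lines.
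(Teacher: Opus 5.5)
Your proof is correct and follows the paper's route. Both arguments send the two common tangents through $p$ to $x=0$ and $y=0$, write each conic as $xy+c\,\ell^2$ with $\ell$ its chord of contact, and then normalize the chords to $z$ and $x+y-z$. The paper does this by placing the four contact points at $(0:1:0),(1:0:0),(0:1:1),(1:0:1)$ and takes their distinctness for granted; your argument that $u\neq 0\neq v$ proves it.
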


\begin{proof} 
If $C$ has four nodes, then it consists of two conics $C_1, C_2$ that intersect in four distinct points. 
Moreover { if  it is of special type $I_2$ } through the point $p$ there must be two lines $r,s$ 
both tangent to the $C_1, C_2$ in distinct points. 
Up to projective transformations we can assume that $p=(0:0:1)$, 
that $r,s$ have equations $x=0$, $y=0$ 
and that the contact points of $C_1$ [resp. of $C_2$] with $r$ and $s$ 
are the points with coordinates $(0:1:0)$ and $(1:0:0)$ [resp. $(0:1:1)$ and $(1:0:1)$]. 
Then $C_1$ [resp. $C_2$] belongs to the pencil of conics 
with equation $\alpha xy+\beta z^2=0$ [resp. $\gamma xy+\delta (x+y-z)^2=0$]. 
Since we do not want $C_1$ or $C_2$ to be a reducible conic, 
we can assume $\alpha= \gamma=1$ and $\beta\neq 0\neq \delta$. 
Eventually, we see that the equation of $C=C_1+C_2$ is of the form \eqref {eq:xy}. 
\end{proof}

\subsection{Quartics with five nodes  (Case (ii))}

\begin{proposition}[An irreducible conic and two lines]\label{prop:special-reducible}
    If $(C,p)$ is special of type $I_2$ and $C$ has five nodes, 
then we can find coordinates $(x:y:z)$ in $\mathbb{P}^2$ such that $C$ has an equation of the form
      \begin{equation}\label{eq:special-reducible}
   xy(axy -(xz+yz-z^2))=0,  
\end{equation}
where $a \neq 0$ and $p=(x_0:y_0:z_0)$ satisfies both 
$x_0y_0\neq 0$ and $$ax_0y_0-(x_0z_0+y_0z_0-z_0^2)=0.$$ 
Note that $C$ has five nodes at the points
\[
(0:1:0),(0:1:1),(1:0:0),(1:0:1) \quad \text{and} \quad (0:0:1).
\]
  \end{proposition}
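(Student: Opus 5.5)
By Proposition \ref{prop:quarticI2}, Case (ii), we already know that $C=Q+L_1+L_2$, where $Q$ is an irreducible conic, $L_1,L_2$ are lines, and $p\in Q$. So the proof only needs to choose coordinates adapted to this configuration and then compute the conic. Since $C$ has exactly five nodes and no worse singularities, the following hold.
\begin{enumerate}[(a)]
\item The point $L_1\cap L_2$ does not lie on $Q$; otherwise it would be a triple point of $C$.
\item Each $L_i$ meets $Q$ transversally in two distinct points; a tangency would give a tacnode.
\end{enumerate}
This accounts for exactly $1+2+2=5$ nodes. Moreover, $p$ must be a smooth point of $C$: it is not one of the nodes (the tangent line at $p$ is required to be a simple tangent, and $\pi^{-1}(p)$ must behave as in the ramified model). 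Hence $p\in Q\setminus(L_1\cup L_2)$.

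First I normalize the lines. I choose coordinates with $L_1=\{x=0\}$ and $L_2=\{y=0\}$, so that $L_1\cap L_2=(0:0:1)$. The projectivities preserving both lines are
\[
(x:y:z)\mapsto(\alpha x:\beta y:\gamma x+\delta y+\varepsilon z),
\]
and they form a $4$-dimensional group. By (a), $Q\cap L_1=\{(0:1:t_1),(0:1:t_2)\}$ with $t_1\neq t_2$ finite. On $L_1$ the projectivity acts by the affine map $t\mapsto(\delta+\varepsilon t)/\beta$, so I can use $\delta,\varepsilon,\beta$ to send $t_1\mapsto 0$ and $t_2\mapsto 1$. Similarly, $Q\cap L_2=\{(1:0:s_1),(1:0:s_2)\}$, and on $L_2$ the action is $s\mapsto(\gamma+\varepsilon s)/\alpha$. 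With $\varepsilon$ already fixed, the parameters $\gamma$ and $\alpha$ still suffice to send $s_1\mapsto 0$ and $s_2\mapsto 1$. This places the four nodes on the lines at $(0:1:0)$, $(0:1:1)$, $(1:0:0)$ and $(1:0:1)$.

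Next I compute the conic. Write a general conic as
\[
Ax^2+By^2+Gz^2+Hxy+Exz+Fyz.
\]
Requiring it to pass through the four points above gives $A=0$, $B=0$, $G+E=0$ and $G+F=0$. Hence
\[
Q=\{Hxy+E(xz+yz-z^2)=0\}.
\]
If $E=0$, the conic is $xy$, which is reducible. If $H=0$, it is $z(x+y-z)$, again reducible. So irreducibility forces $E\neq0$ and $H\neq 0$. Scaling to $E=-1$ and setting $a=H\neq 0$ gives equation \eqref{eq:special-reducible}. The conditions on $p=(x_0:y_0:z_0)$ are then simply $p\in Q$ and $p\notin L_1\cup L_2$, i.e.\ $x_0y_0\neq0$. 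The list of the five nodes follows directly from the construction.

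The main delicate point is the justification at the start: that five nodes force (a) and (b), and that $p$ cannot be a node. This is where the hypothesis that $(C,p)$ is special, rather than just the count of singular points, is used, via Proposition \ref{prop:quarticI2}. The rest is the linear-algebra normalization above. The only check needed there is that the stabilizer of the two lines has enough freedom, which the parameter count confirms.
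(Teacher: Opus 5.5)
Your proof is correct and follows the same route as the paper. You put the two lines at $x=0$, $y=0$ and the four conic--line intersections at $(0:1:0),(0:1:1),(1:0:0),(1:0:1)$, solve for the conic through these points, and place $p$ on the conic and off the lines, supplying the details the paper leaves implicit (transversality, smoothness of $p$, the stabilizer computation). One remark: irreducibility of the conic also forces $a\neq 1$, since $xy-xz-yz+z^2=(x-z)(y-z)$, though the statement as formulated does not need this extra restriction.
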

\begin{proof}
If $C$ has five nodes, 
then $C$ must be the union of an irreducible conic and two general lines. We can choose  coordinates such that
$(0:1:0)$, $(0:1:1)$, $(1:0:0)$ and $(1:0:1)$ are the intersections of the conic with the lines, and the lines have the equations $x=0$ and $y=0$.
We see that the conic must have an equation of the form $axy-xz-yz+z^2=0$ with $a\neq 0$. Moreover, the point $p$ must lie on the conic. 
  \end{proof}

\subsubsection{Quartics with six nodes  (Case (i) with $(6,0$.)}

The following is trivial:

\begin{proposition}\label{prop:specialFourLines}
If the plane quartic $C$ has six nodes, 
we can find coordinates $(x:y:z)$ in $\mathbb{P}^2$ 
such that $C$ has an equation of the form
$$
   xyz(x+y+z)=0
$$
and $p$ is any sufficiently general point of $\bbP^2$. 
\end{proposition}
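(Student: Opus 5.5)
The plan is to argue directly from the geometry of a reduced plane quartic with six nodes, and then use the classification in Proposition \ref{prop:quarticI2}, Case (i) with $(n,m)=(6,0)$, to decide which points $p$ give a special pair.

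First, the shape of $C$. A reduced plane quartic has at most three singular points if it is irreducible. So six nodes force $C$ to be reducible.

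The possible splittings are:
\begin{itemize}
\item cubic plus line, with at most one node on the cubic and three intersection points, so at most four nodes;
\item two conics, with at most four nodes;
\item conic plus two lines, with at most five nodes;
\item four lines.
\end{itemize}
Only four lines reach six nodes. These must be four distinct lines, no three concurrent, since a triple point would not be a node.

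Four lines in general position are four points of the dual plane in general linear position. By the fundamental theorem of projective geometry, any such quadruple can be moved to any other. Hence we may send them to $x=0$, $y=0$, $z=0$ and $x+y+z=0$, which gives the equation $xyz(x+y+z)=0$.

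Next, the point $p$. Since $m=0$, there is no bitangent condition. By Proposition \ref{prop:quarticI2}, the pair is special precisely when:
\begin{itemize}
\item $p\notin C$;
\item $p$ does not lie on any of the three lines joining two opposite nodes (the diagonals of the complete quadrilateral), since such a line would contain two nodes and create worse fibres;
\item any line through $p$ and a node is transverse to both branches there, which is automatic once $p$ is off $C$;
\item no line through $p$ is tangent elsewhere, which is vacuous for lines.
\end{itemize}
Then the six lines joining $p$ to the nodes are distinct and each gives an $I_2$ fibre, while every other line of $\Lambda_p$ meets $C$ in four distinct points. So the admissible $p$ form a nonempty Zariski open set, which is what ``sufficiently general'' means.

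The step I expect to need the most care is checking that a line through $p$ and two nodes really yields a non-$I_2$ fibre. Such a line meets $C$ with multiplicity $2$ at each of the two nodes, so the fibre has two $A_1$ singularities and is of type $I_4$ (or worse). It therefore must be excluded; the rest is routine.
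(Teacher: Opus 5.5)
Your proposal is correct and is essentially the argument the paper has in mind when it calls the statement trivial. As in the proof of Proposition~\ref{prop:quarticI2}, the node count forces four lines with no three concurrent, and these can be moved to $xyz(x+y+z)=0$ because $\mathrm{PGL}_3$ acts transitively on frames; your identification of the admissible $p$ as the complement of the four lines and the three diagonals of the complete quadrilateral (a diagonal through $p$ would give an $I_4$ fibre) correctly makes precise the ``sufficiently general'' that the paper leaves implicit.
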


\subsection{Moduli}\label{ssec: modI2}
It is now straightforward to check that the family of RESSs of special $I_2$ type 
is irreducible and depends on two moduli. 

First, referring to the basic example in Section \ref {ssec:basic}, to  Proposition \ref {prop:cub} and { to Theorem \ref{thm:cub} }
 Proposition \ref {prop:cub}, we have the number of ways of choosing the bitangent conic-line pairs 
up to projective equivalence (which is an $8$-dimensional group).
First, we choose the first conic; there are no moduli for this,
but the projective group is now down to a $3$-dimensional group, the group that preserves this conic.
Second, we choose two points on this conic,
where we will have the bitangencies with the second conic.  
This again has no moduli, 
but the group is now down to a $1$-dimensional group.
Now we choose the second conic which is bitangent at these two points; 
this is a $1$-dimensional family, 
and all are in the same orbit of the $1$-dimensional group that is left; 
hence we have no further continuous group elements to use.
Finally, we have the two tangent lines, each requiring a choice of a point on each conic; this yields two moduli.

The second computation, referring to Proposition \ref {prop:three}, 
is the choice of the three sections of $\bbF_2$.
Each section comes from the $3$-dimensional linear system $|B+2F|$, and so this choice has nine parameters.
The group of automorphisms of $\bbF_2$ is $7$-dimensional;
the difference gives the two moduli.

Third, referring to the Weierstrass equation analysis in Section \ref {ssec:weir}, 
we have seen that the surface is determined by two quadratic polynomials, 
which have $6$ parameters.
Three of them are taken up by the automorphisms of the $\bbP^1$ base of the pencil;
the last is the scaling parameter of the Weierstrass equation.
This gives two moduli again.

Finally, referring to the double cover of $\bbP^2$ representation in Section \ref {sec:double}, we have provided, for all possible cases listed in Proposition \ref {prop:quarticI2}, normal forms for the pair $(C,p)$ and each of them depends on two parameters. \medskip

\begin{remark}
We noted above that the double-plane constructions provide additional structure to the RESS. In the split case, the construction yields two sections of the rational elliptic surface by considering the two pre-images of the blowup of the point $p$.
 If we choose one of the sections to be the zero-section $S_0$, and we call $S_1$ the other section, then Lemma \ref{cor:E2TorsionIFF}  proves that if the RESS is of special type $(a,b)$, $S_1$ is torsion, of order two, if and only if there are $4$ bitangent lines through the point $p$.

In the special $I_2$ case, in each of the six singular fibres, it can happen that $S_0$ and $S_1$ either meet the same component of the singular fibre, or different components.
Hence, we obtain a combinatorial invariant of the construction,
namely the number $k$ of singular fibres in which $S_0$ and $S_1$ meet the same component.

We recall that singular $I_2$ fibres arise from lines through $p$ that either pass through a node of the quartic $C$ or are bitangent to $C$. In the case where the line passes through a node of $C$, we see that $S_0$ and $S_1$ meet the same component of the singular fibre (namely the component lying above the line). In the case where the line is bitangent to $C$, the two sections meet different components (the line splits into the two components of the $I_2$ fibre in the double cover). Therefore, this invariant $k$ measures the number of nodes of the quartic curve $C$, in the split case; we called this number $n$ above, so that $k=n$.

One may then consider how the construction behaves under birational maps.  The invariance of the number $k$ shows that $n$ is also a birational invariant, and no Cremona transformation of the quartic curve can change the number of nodes for this construction, if the base point $p$ for the pencil of lines remains off the quartic.

 There is only one family when the base point lies on the quartic, and it is not difficult to see that this family (case (ii) of Proposition \ref{prop:quarticI2}) can be brought to case (i) with $(n,m)=(6,0)$  via a Cremona transformation.

Note that moreover the limit case for $a\longrightarrow 0$ in Proposition \ref {prop:special-reducible}  (that still refers to case (ii) of Proposition \ref{prop:quarticI2}) is the equation appearing in Proposition \ref {prop:specialFourLines} (that refers to case (i) with $(n,m)=(6,0)$ of Proposition \ref{prop:quarticI2}). The point $p \in C$   in the limit becomes a point outside the four lines. 

Hence, we conclude that there are four families (up to birational equivalence) for the double plane construction (with the extra data it entails, as noted above); each has two moduli.
\end{remark}


These considerations lead to the following observation: in the special $I_2$ case,
the six points on $\bbP^1$ over which there are the six $I_2$ fibres of $\calX$
are not general, but form a family (in the $\bbP^6$ of the space of six points)
of dimension five, a hypersurface.
In Section \ref {ssec:weir}, we have parametrised that hypersurface by two quadratic polynomials $Q_1$ and $Q_2$.

Some interesting questions are in order:

\begin{question}\label{quest:mm}
What is the equation of this hypersurface in $\bbP^6$?
What are the conditions on the corresponding curves of genus two 
(branched over these six points) that give a divisor in $M_2$?
\end{question}

\section{Rational elliptic surfaces with section of special  type $II$}\label{sec:II}

In this section, we will describe RESSs of special type $II$.

\subsection{The Weierstrass Equation}\label{ssec:wnf}
In this case since all singular fibres have $J=0$,
we must have $A=0$ in the Weierstrass equation.
Hence, it is of the form
\[
y^2 = x^3 + B(t)
\]
for a sextic polynomial $B$ with six distinct roots,
which define the positions of the six singular fibres of type $II$. 
Contrary to what happens in the special $I_2$ case, 
here the six points on $\bbP^1$ that correspond to the six singular fibres of type $II$ 
are general. 
Thus, the number of moduli is three.

\subsection{The $E_8$ lattice}
Since there are no components of singular fibres not meeting the zero section,
the sublattice $R$ of Section \ref{NSMW} is trivial,
and so by \eqref{UperpMW} we see that the $E_8$ lattice $U^\perp$
is isomorphic to the Mordell-Weil group of section $\MW$.

Every blowdown of the RESS $\calX$ to $\bbP^2$
must contract nine disjoint sections,
all of which are $(-1)$-curves.
There are infinitely many sections, but only finitely many collections of $8$ disjoint sections, all disjoint from $S_0$.

This is explained by considering the map $\sigma$ introduced in Section \ref{NSMW}.
All sections disjoint from $S_0$ map to $(-2)$-classes in $U^\perp \cong E_8$,
and two such sections are disjoint from each other
if and only if their intersection in $E_8$ is $-1$ (recall \eqref {SSdot}).

Since this Weyl group corresponds to the Cremona group of
the birational transformations of the plane based at the eight points, we have the following:

\begin{theorem}\label{thm:cr}
Given a RESS $(\calX,S_0)$  of special type $II$,
any two cubic pencils which resolve to $\calX$ are Cremona equivalent.
\end{theorem}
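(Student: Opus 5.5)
A cubic pencil resolving to $(\calX,S_0)$ is the same thing as a blow-down $\calX\to\bbP^2$ contracting nine $(-1)$-curves, the last one being $S_0$. So I would first show that these curves are sections, and then compare two such blow-downs through the lattice $U^\perp\cong E_8$.

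First, each contracted curve $E$ is a $(-1)$-curve. Since $-K_\calX = F$, adjunction gives $E\cdot F = -K_\calX\cdot E = 1$, so $E$ is a section. Under the Cremona correspondence the nine contracted curves come as eight curves $E_1,\dots,E_8$ together with a last one. I would arrange, by choosing the order of blow-down appropriately, that the last one is $S_0$ and that $E_1,\dots,E_8$ are disjoint from it. In any case the pencil obtained after contracting the eight sections $E_1,\dots,E_8$ and then $S_0$ is the given one up to projectivity. By the standing hypothesis that $S_0$ is the last blow-up, $S_0$ is disjoint from every $E_i$; when some base points are infinitely near, one works with the exceptional classes rather than irreducible curves.

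Next, I apply $\sigma$. By \eqref{sigmaSsquared} each $\sigma(E_i)$ is a $(-2)$-class in $U^\perp$. By \eqref{SSdot}, pairwise disjointness gives $\sigma(E_i)\cdot\sigma(E_j)=-1$ for $i\neq j$. This yields two octuples $\{u_i\}$ and $\{u'_i\}$ of roots of $U^\perp\cong E_8$ with pairwise products $-1$. By the standard fact recalled at the end of \S\ref{NSMW}, the Weyl group $W(E_8)$ acts transitively on such octuples. Hence some $w\in W(E_8)$ sends $u_i$ to $u'_i$, after a permutation of indices.

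Finally, I interpret $w$ geometrically. Extend $w$ to an isometry of $\NS$ by the identity on $U=\langle S_0,F\rangle$; this is legitimate because $U$ splits off $\NS$. In terms of the first blow-down basis $H,E_1,\dots,E_9$, where $E_9$ is the class of $S_0$ and $F=3H-\sum_{k=1}^9 E_k$, one has $u_i = E_i - E_9 - F$. The isometry therefore sends $H$ to a class $H'$ and $E_i$ to $E'_i$, where $H',E'_1,\dots,E'_8,E'_9=S_0$ is the exceptional basis of the second blow-down. Composing the first blow-down map with the inverse of the second gives a birational map $\bbP^2\dashrightarrow\bbP^2$. On Picard groups it is the isometry $H\mapsto H'$, an element of the Weyl group generated by quadratic transformations. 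It carries the first pencil $|3H-\sum E_k|$ to the second $|3H'-\sum E'_k|$, because both equal $|F|=|-K_\calX|$.

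The main obstacle is the first step: making sure that the blown-down curves of an arbitrary pencil, in particular with infinitely near base points, are sections disjoint from $S_0$, so that \eqref{SSdot} applies. In the special type $II$ case this works because there are no $(-2)$-curves in fibres ($R=0$). Hence no exceptional class can be reducible with a fibre component, and all base points must be distinct proper points. Consequently every $E_i$ is an irreducible section, and the lattice argument goes through cleanly.
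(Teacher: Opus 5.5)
Your proposal is correct and follows essentially the paper's route: the contracted curves are sections disjoint from $S_0$, $\sigma$ sends them to octuples of roots of $U^\perp\cong E_8$ with pairwise product $-1$, and you conclude by transitivity of $W(E_8)$ together with its identification with Cremona transformations based at the eight points. Your adjunction argument, which shows that the absence of $(-2)$-curves forces the nine base points to be distinct, justifies a step the paper only asserts, namely that every blow-down of $\calX$ contracts nine disjoint sections.
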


Indeed, there is a more general fact, 
since the above argument only required that all fibres of $\calX$ are irreducible:

\begin{theorem}
Given a RESS $(\calX,S_0)$ with all irreducible fibres,
any two cubic pencils which resolve to $\calX$ are Cremona equivalent.
\end{theorem}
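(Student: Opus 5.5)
The plan is to reduce the statement to the transitivity of the Weyl group $W(E_8)$ on suitable configurations of roots, as sketched just before Theorem \ref{thm:cr}.

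\textbf{Step 1: a pencil is a set of eight pairwise-orthogonal roots.} A cubic pencil resolving to $(\calX,S_0)$ corresponds to a blow-down $\calX\to\bbP^2$ contracting nine exceptional curves, the last of which we take to be $S_0$. Each $(-1)$-curve $E$ on $\calX$ satisfies $E\cdot(-K_\calX)=1$. Since $-K_\calX=F$, every $(-1)$-curve is a section. The remaining eight exceptional curves $E_1,\dots,E_8$ may be infinitely near, so they are only exceptional classes, not necessarily irreducible curves. When all fibres are irreducible, no fibre component has negative self-intersection. I therefore expect every class of self-intersection $-1$ meeting $F$ once to be represented by an irreducible section. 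One argues this via the isomorphism $U^\perp\cong\MW$ coming from \eqref{UperpMW} with $R=0$, together with injectivity of $\sigma$. It follows that the nine exceptional classes are nine disjoint sections. After $S_0$ is contracted, the classes $E_1,\dots,E_8$ stay orthogonal to $S_0$ and to one another. By \eqref{sigmaSsquared} and \eqref{SSdot}, the classes $u_i=\sigma(E_i)$ are then eight $(-2)$-classes in $U^\perp\cong -E_8$ with $u_i\cdot u_j=-1$ for $i\neq j$. Conversely, by the Lemma preceding the Weyl-group discussion (applied with $b=0$, so the conditions on the $C_i$ are vacuous), any such octuple lifts to eight disjoint sections, all disjoint from $S_0$. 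Contracting these together with $S_0$ yields a cubic pencil resolving to $\calX$.

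\textbf{Step 2: transitivity.} Given two pencils, we obtain two octuples $\{u_i\}$ and $\{u'_i\}$. By the standard fact quoted before Theorem \ref{thm:cr}, there is some $w\in W(E_8)$ with $w(u_i)=u'_i$ for all $i$. This holds because such octuples correspond, through $e_i\mapsto$ the $(-1)$-classes, to exceptional configurations on a blow-up of $\bbP^2$ at eight points. The action of $W(E_8)$ on those exceptional configurations is realised by Cremona transformations based at the eight points.

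\textbf{Step 3: from $w$ to a Cremona map, and the main obstacle.} Extend $w$ to $\NS$ by letting it act trivially on $U$; the extended map preserves $F=-K_\calX$ and $S_0$. It sends the class $\ell$ of the pullback of a line in the first model to a class $\ell'$ that is orthogonal to the exceptional classes of the second model and has $\ell'^2=1$. The classes $\ell$ and $\ell'$ then define the birational map $\bbP^2\dashrightarrow\calX\dashrightarrow\bbP^2$ obtained by blowing down the first configuration and blowing up according to the second. This map is a Cremona transformation carrying the first pencil to the second, since both pencils are $|F|$ pushed down.

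The main obstacle is the irreducibility claim in Step 1. Each $(-1)$-class orthogonal to $S_0$, or its image under $\sigma$, must be an irreducible curve so that the blow-downs genuinely exist. I would handle this by first proving effectivity via Riemann--Roch. I would then show that any decomposition of such a class would need a component of negative intersection with $F$ or a reducible fibre, and this is excluded exactly by the hypothesis that all fibres are irreducible.
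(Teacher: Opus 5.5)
Your proposal is correct and follows essentially the same route as the paper. With all fibres irreducible, $R=0$, so $U^\perp\cong\MW$; blow-downs to $\bbP^2$ then correspond via $\sigma$ to octuples of roots in $U^\perp\cong -E_8$ meeting pairwise in $-1$, and transitivity of the Weyl group (realised by Cremona transformations based at the points) gives the claim. The irreducibility issue you flag, which the paper passes over, closes at once. Since $\sigma=\pi^{-1}$ is onto $U^\perp$ (it is surjectivity of $\sigma$, not injectivity, that you need), any class $D$ with $D^2=-1$ and $D\cdot F=1$ equals $S+kF$ for a section $S$, and $D^2=-1+2k$ forces $k=0$, so the exceptional total transforms are irreducible, pairwise disjoint sections.
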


A standard way of exhibiting eight elements of $E_8$ pairwise intersecting in $-1$
is to use the negative definite Euclidean form on $\bbR^8$,
and consider the $8$ vectors in the rows of the following table:
\[
\begin{array}{cccccccc}
1 & 1 & 0 & 0 & 0 & 0 & 0 & 0 \\
1 & 0 & 1 & 0 & 0 & 0 & 0 & 0 \\
1 & 0 & 0 & 1 & 0 & 0 & 0 & 0 \\
1 & 0 & 0 & 0 & 1 & 0 & 0 & 0 \\
1 & 0 & 0 & 0 & 0 & 1 & 0 & 0 \\
1 & 0 & 0 & 0 & 0 & 0 & 1 & 0 \\
1 & 0 & 0 & 0 & 0 & 0 & 0 & 1 \\
1/2 & 1/2 & 1/2 & 1/2 & 1/2 & 1/2 & 1/2 & 1/2
\end{array}
\]

Alternatively, we can use the standard $E_8$ graph to describe the phenomenon.
This is the graph

\[
 \dynkin[labels={x_7,x_8,x_6,x_5,x_4,x_3,x_2,x_1}, scale=2,root radius =1pt,o/.style={fill=black}]E{8}
\]

Here, the vertices represent $(-2)$-classes in $E_8$, and the adjacencies mean that those classes meet in $+1$.
The following are then $8$ classes, all of which have self-intersection $-2$, and meet each and every one of the others in $-1$:
\[
\begin{array}{ccccccccc}
\text{class in }E_8 &x_1&x_2&x_3&x_4&x_5&x_6&x_7 &x_8\\
r_1: &1 &0 &0 &0 &0 &0 &0 &0 \\
r_2: & 1& 1&0 &0 &0 &0 &0 &0 \\
r_3: & 1&1 &1 & 0&0 &0 &0 &0 \\
r_4: & 1&1 &1 &1 &0 &0 &0 &0 \\
r_5: & 1&1 &1 &1 &1 &0 &0 &0 \\
r_6: & 1& 1& 1& 1& 1& 1& 0& 0\\
r_7: & 1&1 &1 &1 &1 &1 &1 &0 \\
r_8: & 2&3 &4 &5 &6 &4 &2 &3 
\end{array}
\]

\begin{remark}Recall that if a pair $(C,p)$
defines a rational elliptic surface of special type $II$, then 
$p \notin C$ (Lemma \ref{lem:PinC}). 
The corresponding model is necessarily split.
\end{remark}

\subsection{The double cover of $\bbP^2$ approach} \label{ssec:chis}
A smooth curve of genus one with $J=0$ is said to be \textit{equianharmonic}. If $f: \calX\longrightarrow \bbP^1$ is a RESS of special type $II$, then all smooth fibres of $f$ are equianharmonic because clearly the moduli map for the family of fibres of $f$ is constant. These RESSs correspond to pencils of plane curves whose smooth members are all equianharmonic. So, for us, it is relevant to classify such RESSs. This question has been treated as a particular case in \cite {chisini} and different aspects are also explored in \cite {Ye1, Ye2}.

Taking advantage of the double cover of $\bbP^2$ representation of a RESS, Chisini observes in \cite{chisini} that there exists a one-to-one correspondence between 
pencils of plane cubics with constant $J$--invariant 
and pencils of lines in the plane through a point $p$ 
that cut out on a plane quartic $C$ 
quadruples of points all having the same cross-ratio. 

When $J= 0$, Chisini provides an explicit equation for the plane quartic and describes many of its properties. 
This section revisits Chisini's work (for $J\equiv 0$) 
and summarises his results. 
We begin by introducing the following two definitions.

\begin{definition}\label{lambda-special}
A plane quartic $C\subset \bbP^2$ is $\lambda$-special 
if there exists a pencil of lines (in the same $\bbP^2$) 
that cuts the quartic in quadruples of points 
that have the same cross-ratio $\lambda$. 
Moreover, if $1-\lambda+\lambda^2=0$, 
we say that $C$ is \textit{equianharmonic}.
\end{definition}

\begin{definition}\label{lambda-special2}
A nonzero binary form of degree four is said to be \textit{equianharmonic} 
if it defines a quadruple of points with cross-ratio $\lambda$,
where $1-\lambda+\lambda^2=0$.
\end{definition}

As shown by Chisini, 
any $\lambda$-special plane quartic $C$ 
corresponds to a pencil $\calP_C$ of plane cubics with constant $J$-invariant 
(and vice-versa), with
\[
J=\frac{2^8(1-\lambda+\lambda^2)^3}{\lambda^2(1-\lambda^2)}. 
\] 
In particular, $J=0$ corresponds to $\lambda$ 
such that $1-\lambda+\lambda^2=0$.

We now explain how an explicit equation for $C$ 
can be obtained when $\calP_C$ is such that $J= 0$. 

\subsubsection{Chisini's theorem} 
The following theorem is due to Chisini in  \cite[\S 5]{chisini}:

\begin{theorem}\label{prop:chisiniEq}
Fix coordinates $(x:y:z)$ in $\mathbb{P}^2$ 
and let $D$ be any plane cubic that does not contain the point $p=(0:0:1)$. 
If, in these coordinates, $D$ has equation $\phi_3=0$ 
and $C$ is the plane quartic given by the equation
\begin{equation}
f_4\coloneqq \frac{\partial^2\phi_3}{\partial z^2}\phi_3-\frac 12 \left(\frac{\partial \phi_3}{\partial z}\right)^2=0,
\label{quartic}
\end{equation}
then the pencil of lines through the point $p$ 
cuts out on $C$ equianharmonic quadruples of points, 
i.e.,  their cross-ratio $\lambda$ is constant 
and is such that $1-\lambda+\lambda^2=0$; 
hence $C$ is equianharmonic as in Definition \ref{lambda-special}.
\label{main}
\end{theorem}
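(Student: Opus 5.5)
The plan is to reduce the statement to a one-variable identity about binary cubics, by restricting everything to a single line of the pencil $\Lambda_p$. A line through $p=(0:0:1)$ is $\{(\lambda a:\lambda b:\mu)\}$ for fixed $(a:b)$, with $(\lambda:\mu)$ homogeneous coordinates on it. Restricting $\phi_3$ gives a binary cubic $g(\lambda,\mu)=\phi_3(\lambda a,\lambda b,\mu)$. Since $z$ restricts to $\mu$, the operator $\partial/\partial z$ restricts to $\partial/\partial\mu$. Hence $f_4$ restricts to the binary quartic
\[
h = g\,g_{\mu\mu}-\tfrac12 g_\mu^2 .
\]
So the cross-ratio of $C\cap L$ is the cross-ratio of the roots of $h$, and we must show this is equianharmonic for every line $L\in\Lambda_p$ (at least for all but finitely many).

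First I would check that $p\notin C$, so that every line of the pencil meets $C$ in a genuine quartic divisor. Because $p\notin D$, the coefficient $c$ of $z^3$ in $\phi_3$ is nonzero, so $g$ has leading term $c\mu^3$ in the dehomogenized variable $\mu$ (with $\lambda=1$). Then $h$ has leading coefficient $6c\cdot c-\tfrac12(3c)^2=\tfrac32c^2\neq0$. Thus $f_4(p)\neq 0$, and on each line $h$ has honest degree $4$ in $\mu$.

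Next I would use the affine covariance of the construction. Under a substitution $\mu\mapsto\alpha\mu+\beta$, both $g\,g''$ and $(g')^2$ pick up the same factor $\alpha^2$ from the chain rule, so $h$ transforms by the same substitution up to a constant factor. Rescaling $g$ multiplies $h$ by a square. The cross-ratio of the roots is unchanged by all of these, so we may translate to depress the cubic and normalize its leading coefficient: $g=\mu^3+3s\mu+q$. A direct computation gives
\[
h=\tfrac32\mu^4+9s\mu^2+6q\mu-\tfrac92 s^2 .
\]
Writing $h=a_0\mu^4+4a_1\mu^3+6a_2\mu^2+4a_3\mu+a_4$, we have $a_0=\tfrac32$, $a_1=0$, $a_2=\tfrac32 s$, $a_3=\tfrac32 q$, $a_4=-\tfrac92 s^2$. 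The quadratic invariant $I=a_0a_4-4a_1a_3+3a_2^2$ then equals $-\tfrac{27}4s^2+\tfrac{27}4 s^2=0$.

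Finally I would invoke the classical fact that a binary quartic with distinct roots has $1-\lambda+\lambda^2=0$ exactly when $I=0$; this is the $J=0$ condition, since $J\propto I^3/(I^3-27J_{\rm cat}^2)$ in the usual normalizations. This gives the equianharmonic cross-ratio on every line of $\Lambda_p$ for which the four points are distinct. Since $I=0$, the discriminant of $h$ is a nonzero multiple of the square of the cubic invariant, so distinctness fails only where that cubic invariant vanishes. This is a proper closed condition for general $D$, hence it excludes only finitely many lines of the pencil.

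The main obstacle is the bookkeeping. One must correctly identify $\partial_z$ with $\partial_\mu$ on the line, and verify that affine changes in $\mu$ transform $h$ covariantly, so that the depressed-cubic normalization is legitimate. One must also fix which normalization of the invariant $I$ characterizes equianharmonicity. The invariant computation itself is a few lines.
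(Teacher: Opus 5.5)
Your proof is correct, and it takes a genuinely different route from the paper's.

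\textbf{The paper's route.} It argues through polars. Since $\partial f_4/\partial z=\phi_{zzz}\,\phi_3=6c\,\phi_3$, the cubic $D$ is the first polar of $C$ from $p$. The six points where $D$ meets the polar conic $\partial\phi_3/\partial z=0$ lie on $C$ and are flexes of $C$ whose tangents pass through $p$. They are also the only places where a line of $\Lambda_p$ meets $C$ non-reducedly. The paper then argues globally: the $J$-function on the pencil omits a value, so it is constant, and it equals $0$ because of the flex lines.

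\textbf{Your route.} You work line by line. You check, correctly, that $h=gg''-\tfrac12 g'^2$ is covariant under $\mu\mapsto\alpha\mu+\beta$ and under scaling of $g$. The depressed normal form then gives $I(h)=0$ identically.

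\textbf{Comparison.} Your argument is more elementary and more self-contained. It needs neither the surjectivity of a nonconstant $J$-map nor the local analysis implicit in the paper's assertion that $J=0$ along the flex lines. The paper's argument, in exchange, produces the geometric picture: six concurrent flex tangents of $C$, matching the six type $II$ fibres. That picture is what the converse, Theorem~\ref{them:chisiniEqIFF}, reuses.

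\textbf{One step to sharpen.} Instead of saying ``for general $D$'', compute the cubic invariant in your normal form:
$J_{\mathrm{cat}}(h)=a_0a_2a_4-a_2^3-a_0a_3^2=-\tfrac{27}{8}(4s^3+q^2)=\tfrac18\operatorname{disc}(g)$.
Hence $L\cap C$ consists of four distinct points exactly when $L$ meets $D$ in three distinct points. This holds for all but finitely many $L\in\Lambda_p$ precisely when $D$ is reduced. That is the hypothesis the statement implicitly needs in order to be non-vacuous.
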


\begin{proof}
First, we observe that $D$ is the first polar of $C$ with respect to the point $p$. 
That is, $\frac{\partial f_4}{\partial z}=\phi_3$. 
In particular, by definition, if $L_{C,q}$ denotes the tangent line to $C$ 
at a point $q\in C$ we have that
\begin{equation}
C\cap D = \{\text{$q\in C$\,;\, $p\in L_{C,q}$}\}.
\label{int}
\end{equation}

Now, consider the conic $\psi_2=0$, 
which is the first polar of $D$ with respect to the point $p$. 
In other words, let $\psi_2$ be the degree two polynomial 
$\frac{\partial \phi_3}{\partial z}=\frac{\partial^2 f_4}{\partial z^2}$. 
Then the six points $p_1, \ldots,p_6$ lying in both the conic $\psi_2=0$ 
and the cubic $D$ are such that 
\[
\psi_2(p_i)=\frac{\partial \phi_3}{\partial z}(p_i)=\frac{\partial^2 f_4}{\partial z^2}(p_i)=0
\]
 and 
\[ 
 \phi_3(p_i)=\frac{\partial f_4}{\partial z}(p_i )=0.
 \]
In particular, $f_4(p_i)=0$ and by (\ref{int}) it follows that 
$ C\cap D=\{p_1, \ldots,p_6\}$  
and that the points $p_1, \ldots, p_6$ are flexes for $C$ 
and the flex tangent lines pass through $p$. 
Corresponding to these lines one has $J=0$. 
Moreover, since there are no other lines that intersect $C$ in non-reduced quadruples of points, the value $J=1$ is never attained when we move the line 
in the pencil with centre $p$. 
Hence $J$ has to be constant equal to $0$ when we move the line in the pencil with centre $p$.
\end{proof}

Conversely, we also have the following.

\begin{theorem}[{\cite{chisini}, {\S 5}}]\label{them:chisiniEqIFF}
If $C$ is an equianharmonic plane quartic, 
then there exists coordinates $(x:y:z)$ in $\mathbb{P}^2$ 
such that $p=(0:0:1)$ and  the equation of $C$ is  as in (\ref{quartic}). 
\end{theorem}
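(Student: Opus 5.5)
The plan is a direct computation with the apolar invariant $I$ of binary quartics, using the standard fact that a binary quartic is equianharmonic exactly when $I$ vanishes. Let $p$ be the centre of the pencil of lines cutting out equianharmonic quadruples on $C$, and choose coordinates with $p=(0:0:1)$. First I will show that $p\notin C$. The pencil then resolves to a RESS with constant $J=0$; a fibre with $J=0$ is either smooth equianharmonic or of type $II$, so all singular fibres are of type $II$, and Lemma \ref{lem:PinC} (see also the Remark preceding \S\ref{ssec:chis}) excludes the ramified model.

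Since $p\notin C$, I can write
\[
f_4=a_0z^4+a_1z^3+a_2z^2+a_3z+a_4, \qquad a_0\in\bbC^*,
\]
where $a_i=a_i(x,y)$ is a binary form of degree $i$. Restrict to the line through $p$ and $(\lambda:\mu:0)$, parametrised by $(s\lambda:s\mu:z)$. This gives the binary quartic $\sum_i a_i(\lambda,\mu)\,z^{4-i}s^i$ in $(z:s)$. Write it as $\alpha z^4+4\beta z^3s+6\gamma z^2s^2+4\delta zs^3+\varepsilon s^4$. Its apolar invariant is
\[
I=\alpha\varepsilon-4\beta\delta+3\gamma^2 .
\]
The quadruple is equianharmonic, i.e.\ the cross-ratio satisfies $1-\lambda+\lambda^2=0$, exactly when $I=0$ (for quartics with distinct roots this is classical). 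The hypothesis therefore says that the binary form $a_0a_4-\tfrac14 a_1a_3+\tfrac1{12}a_2^2$ of degree $4$ in $(\lambda,\mu)$ vanishes at every general point. Hence it vanishes identically, which gives the key identity
\[
6a_1a_3-2a_2^2=24\,a_0a_4 .
\]

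Next, set $\phi_3=\partial f_4/\partial z=4a_0z^3+3a_1z^2+2a_2z+a_3$ and compute $F:=\phi_3''\phi_3-\tfrac12(\phi_3')^2$, where primes denote $\partial/\partial z$. Expanding in powers of $z$, the coefficients of $z^4,z^3,z^2,z$ come out as $24a_0^2,\,24a_0a_1,\,24a_0a_2,\,24a_0a_3$. The constant term is $6a_1a_3-2a_2^2$, which by the key identity equals $24a_0a_4$. So $F=24a_0f_4$. Since $F$ is homogeneous of degree $2$ in $\phi_3$, replacing $\phi_3$ by $c\,\phi_3$ with $c^2=1/(24a_0)$ shows that $C$ has equation \eqref{quartic} for the cubic $D:\ c\,\phi_3=0$. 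This cubic does not pass through $p$, because its $z^3$ coefficient $4ca_0$ is nonzero. This matches the form in Theorem \ref{prop:chisiniEq}.

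I expect the main obstacle to be the justification of the equianharmonic $\Leftrightarrow I=0$ step for all lines of the pencil, together with the exclusion of $p\in C$. For the first point, it suffices to use general lines, because the resulting polynomial identity in $(\lambda,\mu)$ then holds everywhere. For the second point, the fallback if the RESS argument is felt to be circular is the following. If $a_0=0$, then $I=0$ gives $a_2^2=3a_1a_3$. In that case every line through $p$ meets the residual cubic $a_1z^3+\dots$ with a specific degeneracy, and I would try to show that this forces a singular fibre that is not of type $II$ or $I_2$.
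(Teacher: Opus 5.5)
Your core computation is correct, and it is the paper's argument made explicit. The paper builds $C'$ from the polar $\phi_3=\partial f_4/\partial z$, uses Theorem~\ref{prop:chisiniEq} to see that $C'$ is equianharmonic with respect to $p$, and then applies Lemma~\ref{bin} to get $C=C'$. Your identity $24a_0a_4=6a_1a_3-2a_2^2$, together with $\phi_3''\phi_3-\tfrac12(\phi_3')^2=24a_0f_4$, is exactly the content of that lemma, done in one step.

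\textbf{The gap is the first step, $p\notin C$.}

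\emph{Your justification does not work.} Constant $J=0$ does not force the singular fibres to be of type $II$: fibres of types $IV$, $IV^*$, $II^*$, and $I_0^*$ can also have $J=0$. Moreover, Lemma~\ref{lem:PinC} presupposes that the RESS is of special type, which is not part of the hypothesis.

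\emph{The claim itself does not follow from the hypothesis.} Take $C: xz^3+x^4+y^4=0$ and $p=(0:0:1)$.
\begin{itemize}
\item $C$ is smooth, and $p\in C$ is a hyperflex with tangent $x=0$.
\item The line through $p$ and $(\lambda:\mu:0)$ cuts out $s(\lambda z^3+(\lambda^4+\mu^4)s^3)=0$. In the coordinate $z/s$ this is the quadruple $\infty, c, c\omega, c\omega^2$, with $\omega$ a primitive cube root of unity, so it is equianharmonic.
\item The associated RESS is $Y^2=X^3+t^2(t^4+1)$, which has four fibres of type $II$ and one of type $IV$. This directly contradicts your fibre claim.
\end{itemize}
On the other hand, if $\phi_3$ has $z^3$-coefficient $c_0\neq 0$, then the quartic \eqref{quartic} takes the value $\tfrac32 c_0^2\neq 0$ at $p$. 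So this $C$ is not of the form \eqref{quartic} with $p=(0:0:1)$.

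\emph{Your fallback case $a_0=0$ produces exactly such curves, not a contradiction.}
\begin{itemize}
\item If $a_1\neq 0$, then $a_2^2=3a_1a_3$ forces $a_2=a_1b_1$ with $b_1$ linear and $a_3=a_1b_1^2/3$. The substitution $z\mapsto z-b_1/3$ fixes $p$ and every line through it, and gives $f_4=a_1z^3+\tilde a_4$ with $\tilde a_4=a_4-a_1b_1^3/27$. The general line through $p$ then cuts $\{\infty,c,c\omega,c\omega^2\}$, so $C$ is equianharmonic.
\item If $a_1=0$, then $p$ is a point of multiplicity at least $3$, and the cross-ratio is undefined.
\end{itemize}

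So the hypothesis $p\notin C$ must be added to the statement rather than derived. The paper's proof also uses it tacitly: Theorem~\ref{prop:chisiniEq} needs $D\not\ni p$, and the last step of Lemma~\ref{bin} divides by the value of the binary quartic at $p$. The hypothesis is available where the theorem is applied, namely in Theorem~\ref{thm:chisini+}, via Lemma~\ref{lem:PinC} for RESSs of special type $II$. With that hypothesis, your argument is complete.
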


\begin{proof}
By definition, there exists a pencil of lines through a point $p$, 
say $\Lambda_{p}$,  
that cuts $C$ at equianharmonic quadruples of points. 
Choose coordinates $(x:y:z)$ in $\bbP^2$ such that $p=(0:0:1)$ 
and consider the first polar of $C$ with respect to $p$, 
which in these coordinates is given by $\phi_3=0$. 
Then, by definition, 
if in these coordinates $C$ is given by $f_4=0$, 
we have that $\frac{\partial f_4}{\partial z}=\phi_3$. 

Next, consider the quartic $C'$ given (in these coordinates) by
\[
f'_4\coloneqq \frac{\partial^2\phi_3}{\partial z}\phi_3-\frac 12 \left(\frac{\partial \phi_3}{\partial z}\right)^2=0.
\]
By Proposition \ref{main}, 
the pencil $\Lambda_{p}$ also cuts out on $C'$ 
equianharmonic quadruples of points; 
and, moreover, $\frac{\partial f'_4}{\partial z}=\phi_3$. 
Therefore, Lemma \ref{bin} below implies that 
given any line $L$ of $\Lambda_{p}$, 
we have that $L\cap C=L\cap C'$. 
So, it must be the case that the two plane quartics $C$ and $C'$ coincide. 
\end{proof}

\begin{lemma}
Let $\varphi_4(u,v)$ and $\psi_4(u,v)$ be two non--zero binary quartics 
which are both equianharmonic and have the same binary cubic 
as their first polar with respect to the point $(0,1)$. 
Then $\varphi_4=\alpha \cdot \psi_4$, with $\alpha\in \mathbb C$.
\label{bin}
\end{lemma}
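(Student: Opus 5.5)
The approach is to reduce the statement to a comparison of coefficients, using the fact that the quartic invariant $I$ detects equianharmonicity. Write
\[
\varphi_4(u,v)=a_0u^4+a_1u^3v+a_2u^2v^2+a_3uv^3+a_4v^4,
\]
and similarly $\psi_4$ with coefficients $b_0,\dots,b_4$. The first polar with respect to $(0,1)$ is $\partial/\partial v$. It is the binary cubic $a_1u^3+2a_2u^2v+3a_3uv^2+4a_4v^3$. Since this cubic is common to both quartics, $a_i=b_i$ for $i=1,\dots,4$. Hence
\[
\psi_4=\varphi_4+c\,u^4\quad\text{for some } c\in\bbC,
\]
and it remains to show that $c=0$.

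Next, I would use the classical characterisation of equianharmonic quartics by the invariant
\[
I(\varphi_4)=12a_0a_4-3a_1a_3+a_2^2 .
\]
A binary quartic with four distinct roots has cross-ratio $\lambda$ satisfying $1-\lambda+\lambda^2=0$ if and only if $I=0$. This is the standard fact $J\propto I^3/\Delta$, and it can be checked on the normal form $u^4+ku^2v^2+v^4$, or on $uv(u-v)(u-\lambda v)$. Applying this to both quartics gives $I(\varphi_4)=I(\psi_4)=0$. Since only $a_0$ changes, subtracting the two equations yields
\[
12\,c\,a_4=0 .
\]
So if $a_4\neq 0$ we get $c=0$, and $\psi_4=\varphi_4$; the constant $\alpha$ in the statement is then $1$.

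The main obstacle is the degenerate case $a_4=0$, which says exactly that $(0,1)$ is a root of $\varphi_4$. In this case the argument genuinely fails. For example, $u(u^3+3u^2v+3uv^2+v^3)=u(u+v)^3$ and $u(u+v)^3+cu^4$ both have $I=0$ and share the same polar, yet they are not proportional. (The first has a triple root; one can also produce examples with distinct roots by imposing $a_2^2=3a_1a_3$.) So I would state explicitly that the lemma is used with $\varphi_4(0,1)\neq 0$. This holds in the application in Theorem \ref{them:chisiniEqIFF}: there the quartics are restrictions of $f_4$ and $f'_4$ to a line $L\in\Lambda_p$, parametrised so that $(0,1)$ corresponds to $p$, and $p\notin C$ by Lemma \ref{lem:PinC}. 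The same holds for $p\notin C'$, because the $z^4$-coefficient of $f'_4$ is determined by the polar $\phi_3$, namely $\partial^3\phi_3/\partial z^3\cdot(\text{const})$, and it agrees with that of $f_4$.

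I would also check the non-reduced members of the pencil. These are the lines where the quadruple has a repeated root, and there the cross-ratio interpretation breaks down. For them, $I=0$ should be taken as the definition of equianharmonic, consistent with $J=0$ extending by continuity. The coefficient computation above then applies verbatim.
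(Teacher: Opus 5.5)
Your argument is the same as the paper's: the common polar fixes $a_1,\dots,a_4$, and the vanishing of $I$ (Lemma~\ref{invI}) then determines $a_0$. The paper only assumes proportional polars, $a_i=\alpha b_i$, which is what Theorem~\ref{them:chisiniEqIFF} actually needs, since there $\partial f'_4/\partial z=24f_4(p)\,\phi_3$; so your remark that the $z^4$-coefficients of $f_4$ and $f'_4$ agree holds only after rescaling $f'_4$.

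Your caveat is correct and exposes a genuine gap in the paper, whose last step silently divides by $b_4$. For instance, $uv(u^2+3uv+3v^2)$ and $u(u+v)(u^2+3v^2)$ both have four distinct roots, $I=0$ and the same $\partial/\partial v$, yet are not proportional. So the hypothesis $\varphi_4(0,1)\neq 0$ must be added. It does hold in the application, because $p\notin C$, equivalently $\phi_3(p)=4f_4(p)\neq 0$, which Theorem~\ref{main} already assumes.
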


\begin{proof}
We write
\begin{align*}
\varphi_4(u,v)&=a_0u^4+a_1u^3v+a_2u^2v^2+a_3uv^3+a_4v^4, \quad \text{and}\\
\psi_4(u,v)&=b_0u^4+b_1u^3v+b_2u^2v^2+b_3uv^3+b_4v^4.
\end{align*}
By assumption, $\frac{\partial \varphi_4}{\partial v}=\frac{\partial \psi_4}{\partial v}$ so there exists some $\alpha \neq 0$ such that $a_i=\alpha\cdot b_i$ for $1\leq i\leq 4$. Now, because both $\varphi_4(u,v)$ and $\psi_4(u,v)$ are equianharmonic, Lemma \ref{invI} below implies that
\[
12a_0a_4-3a_1a_3+a_2^2 = 12b_0b_4-3b_1b_3+b_2^2 =0.
\]
Thus, by  replacing $a_i=\alpha\cdot b_i$ for $1\leq i\leq 4$, we obtain that $a_0=\alpha b_0$.
\end{proof}

\begin{lemma}
If a binary quartic $a_0u^4+a_1u^3v+a_2u^2v^2+a_3uv^3+a_4v^4$ is equianharmonic, then
\[
I:=12a_0a_4-3a_1a_3+a_2^2=0.
\]
\label{invI}
\end{lemma}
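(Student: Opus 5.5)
The plan is to use the fact that $I$ is, up to normalisation, the classical degree-two invariant of binary quartics. I will show that $I$ is a $GL_2$-semi-invariant, reduce to a normal form, and evaluate $I$ there.

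First I check that the hypothesis pins the quartic down up to coordinate change and scaling. If $\varphi_4=a_0u^4+\dots+a_4v^4$ is equianharmonic, it defines four points of $\bbP^1$ with cross-ratio $\lambda$ satisfying $1-\lambda+\lambda^2=0$. Since such a $\lambda$ is different from $0,1,\infty$, the four points are distinct. Two quadruples of distinct points with the same cross-ratio (for a suitable ordering) are related by an element of $PGL_2$. Moreover, a binary quartic is determined by its roots up to a nonzero scalar. Hence there is $g\in GL_2(\bbC)$ and $c\neq 0$ with $\varphi_4 = c\cdot(g\cdot \psi)$, where $\psi$ is a fixed equianharmonic model. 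I take the roots $0,1,\omega,\omega^2$ (with $\omega$ a primitive cube root of unity), that is, $\psi=v(v^3-u^3)= -u^3v+v^4$, or equivalently $u(u^3-v^3)=u^4-uv^3$. One checks directly that $\{0,1,\omega,\omega^2\}$ has cross-ratio a root of $\lambda^2-\lambda+1$. For $u^4-uv^3$ we have $a_0=1$, $a_3=-1$ and all other coefficients zero, so $I=12\cdot 1\cdot 0-3\cdot 0\cdot(-1)+0^2=0$.

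Second, I show that $I(g\cdot\varphi)=(\det g)^4\, I(\varphi)$ for all $g\in GL_2$. Together with the homogeneity $I(c\varphi)=c^2 I(\varphi)$, this gives $I(\varphi_4)=0$. It suffices to verify the transformation law on generators of $GL_2$.
\begin{enumerate}[(a)]
\item Diagonal maps $u\mapsto su$, $v\mapsto tv$ send $a_i\mapsto s^{4-i}t^ia_i$. Each of the monomials $a_0a_4$, $a_1a_3$, $a_2^2$ is multiplied by $s^4t^4=(\det)^4$.
\item The swap $u\leftrightarrow v$ sends $a_i\mapsto a_{4-i}$. Since $I$ is visibly symmetric under this, it is preserved, and $(\det)^4=1$.
\item The shear $u\mapsto u+\tau v$ has determinant $1$. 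Here one expands the new coefficients, namely $a_0'=a_0$, $a_1'=a_1+4\tau a_0$, $a_2'=a_2+3\tau a_1+6\tau^2a_0$, and so on, and checks that the coefficients of $\tau,\tau^2,\dots$ in $12a_0'a_4'-3a_1'a_3'+a_2'^2$ vanish.
\end{enumerate}

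The shear computation is the only step with real content and is where I expect the work to lie. I would first try to avoid brute force by recognising $I$ as the fourth transvectant $(\varphi,\varphi)_4$, equivalently the apolarity pairing of $\varphi$ with itself. In normalised coefficients $\varphi=\sum\binom4i \alpha_i u^{4-i}v^i$ this pairing reads $2(\alpha_0\alpha_4-4\alpha_1\alpha_3+3\alpha_2^2)$. Substituting $a_i=\binom4i\alpha_i$ turns this into a nonzero constant multiple of $I$. The invariance of the transvectant is classical, since it is built from the $SL_2$-invariant operator $\partial_{u_1}\partial_{v_2}-\partial_{v_1}\partial_{u_2}$. If one prefers a self-contained argument, the polynomial identity in $\tau$ for the shear is a finite check of the $\tau$ and $\tau^2$ terms, since higher powers cancel by the same pattern.
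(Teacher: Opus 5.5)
Your proof is correct, but it takes a more self-contained route than the paper. The paper normalises the four roots to $\{0,1,\infty,\lambda\}$, i.e.\ takes $\varphi_4=c\,uv(u-v)(u-\lambda v)$, and then cites the classical fact that $I=0$ exactly when $1-\lambda+\lambda^2=0$. Its reduction ``up to a change of coordinates'' silently uses that $I$ is a $GL_2$ semi-invariant, which is never proved there.

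You prove that semi-invariance directly, either on generators of $GL_2$ or via the fourth transvectant. After that you need only one evaluation at a single equianharmonic model, so your argument needs no references. You also note that an equianharmonic form automatically has distinct roots, which settles the degenerate case the paper's ``if the points are all distinct'' leaves open.

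What the paper's normal form buys is the converse. With invariance in hand, $I\bigl(uv(u-v)(u-\lambda v)\bigr)=(1+\lambda)^2-3\lambda=\lambda^2-\lambda+1$. So $I=0$ characterises equianharmonic forms among those with distinct roots. Plugging this one-parameter model into your invariance argument would give you that direction at no extra cost.

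One point of rigour: ``higher powers cancel by the same pattern'' is not an argument as it stands. You can either write out the $\tau^3$ and $\tau^4$ coefficients, which are $(12-48+36)a_0a_1$ and $(12-48+36)a_0^2$. Or you can note that shears form a one-parameter group, $\varphi_{\tau+\sigma}=(\varphi_\tau)_\sigma$. Then vanishing of the linear coefficient for every $\varphi$ already forces $\tau\mapsto I(\varphi_\tau)$ to be constant.
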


\begin{proof}
To any (nonzero) binary quartic as above, we can associate four points on $\mathbb{P}^1$ (the roots). If the points are all distinct, up to a change of coordinates, we may assume that they are $\{0, 1, \infty, \lambda\}$. Then $\lambda$ is (up to ordering) the cross-ratio of the four points, and one can show  (see, e.g., \cite{invariants}, \cite{dimca}, \cite[Remark 5.2]{chilean} and  \cite[Vol. I, p. 29]{EC}) that
\[
1-\lambda+\lambda^2=0 \iff I=0.
\]
\end{proof}

We have then:

\begin{theorem}\label{thm:chisini+}
Let $S$ be a RESS of special type $II$ and let  $(C,p)$ be associated to it. Then:
\begin{enumerate}[(i)]
    \item $P \notin C$;
    \item fix coordinates $(x:y:z)$ in $\mathbb{P}^2$ such that $p=(0:0:1)$. 
Let  $\phi_3=0$ be the equation of a general plane cubic $D$ that does not contain the point  $p$.
    Then the equation of $(C,p)$ has a normal form 
\begin{equation}
f_4\coloneqq \frac{\partial^2\phi_3}{\partial z^2}\phi_3-\frac 12\left(\frac{\partial \phi_3}{\partial z}\right)^2=0.
\label{quarticbis}
\end{equation}
\end{enumerate}
\end{theorem}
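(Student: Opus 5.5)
Part (i) is immediate from Lemma \ref{lem:PinC}. If $p\in C$, the model is ramified and the RESS would have to be of special type $I_2$. A surface of special type $II$ has no $I_2$ fibre at all, so $p\notin C$ and the model is split.

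For (ii) the plan is to reduce to Theorem \ref{them:chisiniEqIFF}, by showing that $C$ is equianharmonic with respect to the pencil $\Lambda_p$ of lines through $p$. By \S\ref{ssec:wnf}, every smooth fibre of $f$ has $J=0$, since $A\equiv 0$ in the Weierstrass equation. Take a general line $L\in\Lambda_p$. Its preimage in $Y$, blown up at $\pi^{-1}(p)$, is the smooth fibre over $L$, and it is the double cover of $L\cong\bbP^1$ branched at the four points $L\cap C$. The $J$-invariant of such a double cover is the $J$-invariant of the binary quartic cutting out $L\cap C$. This is given by Chisini's formula in $\lambda$, its cross-ratio. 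Hence $1-\lambda+\lambda^2=0$ for general $L$, so $C$ is equianharmonic in the sense of Definition \ref{lambda-special}, with centre $p$.

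Next choose coordinates with $p=(0:0:1)$, which is possible since $p\notin C$. Theorem \ref{them:chisiniEqIFF} then gives the normal form \eqref{quarticbis}, with $\phi_3=\partial f_4/\partial z$ the first polar of $C$ with respect to $p$. We must also check that $D=\{\phi_3=0\}$ does not contain $p$. Since $p\notin C$, the coefficient of $z^4$ in $f_4$ is nonzero, so $\phi_3$ has a nonzero $z^3$ coefficient and $\phi_3(p)\neq 0$. By Theorem \ref{prop:chisiniEq}, $C\cap D$ consists of the six points $p_1,\dots,p_6$, which are the flexes of $C$ whose tangent lines pass through $p$.

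The main obstacle is the word \emph{general}, that is, showing the normal form is exactly right in both directions. First, every special-type-$II$ pair arises this way, so $D$ is an honest cubic with $p\notin D$; this follows from the above. Second, for a general cubic $D$ with $p\notin D$, the pair $(C,p)$ given by \eqref{quarticbis} really has six distinct type $II$ fibres. For this second direction the plan is as follows.
\begin{itemize}
\item[(a)] Observe that $D$ general forces the conic $\psi_2=\partial\phi_3/\partial z$ to meet $D$ in six distinct points.
\item[(b)] Show that $C$ is then smooth at these points and no line of $\Lambda_p$ is bitangent or a hyperflex tangent. The only lines of $\Lambda_p$ giving non-reduced quadruples are the six flex lines, and a nodal $C$ would produce fibres with $J\neq 0$ or of type $I_2$, contradicting constant $J=0$.
\item[(c)] Conclude that the fibres over the six flex lines are simple cusps (type $II$), by Table \ref{table:possiblequartics}, giving total multiplicity $12$.
\end{itemize}
Genericity of $D$ is what rules out coincidences among the $p_i$ and degenerations of $C$, and this is the step I expect to need the most care.
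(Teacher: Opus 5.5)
Your proof is correct and follows the paper's route: (i) comes from Lemma~\ref{lem:PinC}, and (ii) from Theorem~\ref{them:chisiniEqIFF}, once one notes, as you do, that $A\equiv 0$ forces the general line of $\Lambda_p$ to cut $C$ in an equianharmonic quadruple; your remark that $p\notin C$ makes the $z^4$ coefficient of $f_4$ nonzero is exactly what gives $p\notin D$, and it is also what Lemma~\ref{bin} tacitly needs. The converse sketch (a)--(c) goes beyond what the paper proves, and the heuristic in (b) is wrong: the Fermat case $\gamma=0$ gives a nodal $C$ with $J\equiv 0$ and three type $IV$ fibres, so smoothness of $C$ for general $D$ should instead come from openness plus an explicit smooth member (Corollary~\ref{cor:norm}, Example~\ref{ex:hesseCuartic}), after which $C\cdot D=2(p_1+\cdots+p_6)$ with distinct $p_i$ forces exactly six simple flex lines through $p$ and no other tangents.
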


\begin{proof} (i) follows from Proposition \ref{lem:PinC}, 
while Theorem \ref{them:chisiniEqIFF} proves (ii).
\end{proof}

\begin{corollary}\label{cor:norm}
We maintain the notation of Theorem \ref{thm:chisini+}.   
Let $S$ be a RESS of special type $II$ and let $(C,p)$ be associated to it. 
Then  the equation of $(C,p)$ has a normal form 
 \begin{equation*}
\frac{\partial^2\phi_3}{\partial z^2}\phi_3-\frac 12 \left(\frac{\partial \phi_3}{\partial z}\right)^2=0, \text{ where }
\phi_3=x^3+y^3+z^3-3\gamma xyz=0
\label{quarticcubiss}
\end{equation*}
and $\gamma$ is general, with $p=(0:0:1)$.
\end{corollary}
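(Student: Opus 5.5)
The plan is to start from Theorem \ref{thm:chisini+} and reduce the general cubic $D$, $\phi_3=0$, to Hesse form by a projective transformation fixing $p=(0:0:1)$. The construction in \eqref{quarticbis} is covariant for projective changes of coordinates fixing $p$. If $g\in PGL_3$ fixes $p$, then in the new coordinates $\partial/\partial z$ becomes, up to a scalar, the derivation along the direction of $p$. The first polar of $C$ with respect to $p$ is still $D$, and the second polar is still the polar conic of $D$ with respect to $p$. Hence $f_4$ transforms, up to a nonzero scalar factor, into the quartic built from $\phi_3\circ g$. I will check this covariance first, either directly with the chain rule or by noting that \eqref{quarticbis} depends only on the polars of $D$ with respect to $p$. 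The problem is then to find, for a general cubic $D$ not through $p$, coordinates with $p=(0:0:1)$ in which $\phi_3=x^3+y^3+z^3-3\gamma xyz$.

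The key step is to choose the Hesse coordinates compatibly with $p$. A smooth cubic $D$ has a four-dimensional family of Hesse normal forms: each is determined by a choice of one of the four triangles containing the nine flexes, together with the action of the Hessian group. The condition that the vertex $(0:0:1)$ of the coordinate triangle equals $p$ is two conditions, and we need freedom to meet them. For this I will use the scalings $(x,y,z)\mapsto(\alpha x,\beta y,z)$ with $\alpha^3=\beta^3=1$, together with the scalings that preserve the Hesse pencil shape up to changing $\gamma$. A cleaner route is a dimension count. The family of pairs (Hesse cubic with parameter $\gamma$, point $(0:0:1)$), moved by $PGL_3$, consists of pairs $(D,q)$ such that $q$ is a vertex of a flex triangle of $D$ after applying elements of $PGL_3$. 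This family has dimension at most $1+8-\dim\mathrm{Stab}$, and the stabilizer is finite, so the dimension is at most $9$. The space of pairs $(D,q)$ has dimension $9+2=11$, so the naive count falls short.

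This shows the main obstacle: the count suggests that $(0:0:1)$ cannot be assumed to be a vertex of the Hesse triangle for a general pair $(D,p)$. I therefore expect the genuine argument to exploit the fact that $C$ determines $D$ only modulo the freedom in Theorem \ref{them:chisiniEqIFF}. The right statement is that the moduli of special type $II$ RESSs is $3$, as computed in \S\ref{ssec:wnf}. The Hesse family with the point $p$ fixed at $(0:0:1)$ carries the parameter $\gamma$ plus the residual $PGL_3$ stabilizer of $p$ acting, which has dimension $6$. So the real step is to show that the map $(\gamma, h)\mapsto (h\cdot\phi_3, p)$, for $h$ in the stabilizer of $p$, dominates the space of pairs (cubic, $p$) modulo the same group. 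That space has $9-6=3$ parameters, whereas $\gamma$ supplies only $1$, so a naive argument fails again. I would then redo the count at the level of RESSs via the Weierstrass form $y^2=x^3+B(t)$. The plan is to use the $\bbZ/3$ automorphism $x\mapsto\omega x$ of every fibre: it descends to the double plane and gives the extra symmetry of $C$ that allows a triangle of coordinates through $p$ adapted to the $\mu_3$-action. With that symmetry, the Hesse form of $D$ with $p$ a vertex follows, and the general $\gamma$ then reflects the residual moduli. Making this step rigorous, or finding that the statement requires this extra symmetry argument, is where I expect the difficulty to lie.
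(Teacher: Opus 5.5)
Your dimension counts are right, and they show that no extra argument can close your last step: the statement, with $p$ fixed at $(0:0:1)$ and $\phi_3$ in Hesse form, fails for a general RESS of special type $II$. The paper's proof has two parts. It observes that every smooth cubic is projectively equivalent to a Hesse cubic. It then checks that the resulting pairs are of special type $II$, which is only the converse direction. It never checks that the projectivity can also send $p$ to $(0:0:1)$, which is exactly the point you flagged. In general it cannot. For a Hesse cubic, $(0:0:1)$ is a vertex of the flex triangle $xyz=0$, so $p$ would have to be one of the twelve vertices of the four flex triangles of $D$.

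You can see the failure directly. For $\phi_3=x^3+y^3+z^3-3\gamma xyz$ one gets $f_4=6x^3z+6y^3z+\frac32 z^4-9\gamma xyz^2-\frac92\gamma^2x^2y^2$. By Chisini's argument the six type $II$ fibres correspond to the tangent lines from $p$ to $D$. These are the lines $y=tx$ with $t^6+(2-4\gamma^3)t^3+1=0$. This sextic is invariant under $t\mapsto\omega t$ and $t\mapsto t^{-1}$. So the normal form produces only a one-parameter family of surfaces whose six singular fibres have dihedral symmetry. By \S\ref{ssec:wnf}, however, the six points are general and there are three moduli.

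Your proposed repair via the fibrewise automorphism $x\mapsto\omega x$ also fails on its own terms. It fixes $S_0$ and acts on $\mathrm{MW}(\calX)\cong E_8$ as complex multiplication $[\omega]$, with $1+[\omega]+[\omega]^2=0$. A section fixed by it would therefore be $3$-torsion, hence equal to $S_0$, since $E_8$ is torsion-free. So it moves $S_1$ and does not descend to an automorphism of $(C,p)$.

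What your covariance paragraph does give is a correct three-parameter normal form. Put $D$ in Hesse form but leave $p=(p_0:p_1:p_2)$ free, and replace $\partial/\partial z$ by the polar operator $\Delta_p=p_0\partial_x+p_1\partial_y+p_2\partial_z$. The quartic is then $(\Delta_p^2\phi_3)\,\phi_3-\frac12(\Delta_p\phi_3)^2=0$, with parameters $(\gamma,p)$. Alternatively, keep $p=(0:0:1)$ and use its stabilizer to reduce to $\phi_3=z^3+z\,q_2(x,y)+c_3(x,y)$, with $(q_2,c_3)$ taken modulo $GL_2$. The normal form in the statement covers only the pairs in which $p$ is a vertex of a flex triangle of $D$.
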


\begin{proof}
 Any smooth plane cubic is projectively isomorphic to a member of the Hesse pencil. Thus, with notations as in Proposition \ref{main}, it is relevant to our purposes to consider the cubics of the form $\phi_3=x^3+y^3+z^3-3\gamma xyz=0$. 
If $\gamma$ is general, then the equianharmonic quartic $f_4=0$ is smooth, and there are precisely six concurrent flex lines at $p=(0:0:1)$.  
\end{proof}

\begin{example}[$\gamma=4$]\label{ex:hesseCuartic}
If one takes $\phi_3=x^3+y^3+z^3-12xyz$, which defines a plane cubic lying in the Hesse pencil, then 
\[
f_4=6 x^3 z-72 x^2 y^2-36 x y z^2+6 y^3 z+\frac{3 z^4}{2}.
\]

The six flex points have the form $(-16\zeta^5+32\zeta^2:\zeta:1)$, where $2\zeta$ is one of the following six algebraic numbers: 
\begin{multicols}{2}
\begin{itemize}
    \item $-(-8-3\sqrt{7})^{1/3}$,
    \item $-(-8+3\sqrt{7})^{1/3}$, 
    \item $(8-3\sqrt{7})^{1/3}$, 
    \item $(8+3\sqrt{7})^{1/3}$, 
    \item $\omega(8-3\sqrt{7})^{1/3}$ or 
    \item $\omega(8+3\sqrt{7})^{1/3}$;
\end{itemize} 
\end{multicols}
and $\omega=e^{2\pi i/3}$.
\end{example}

\begin{remark}   For special values of $\gamma$, namely $\gamma\in \{0,1,\omega,\omega^2\}$ with $\omega=e^{2\pi i/3}$, the curve defined by $f_4=0$ is singular.
  For example, if  $\phi_3$ is the Fermat cubic $x^3+y^3+z^3=0$, then $C$ is reducible. $C$ is the union of the Fermat cubic and the line $z=0$  which intersect in $3$ distinct points. The corresponding rational elliptic surface will have three singular fibres of type  $IV$. It is a non-special example with $J=0$.
\end{remark}

\subsection{A different approach} In this section, we give a way, different from Chisini's one, to determine the pencils of cubics such that all their smooth members are equianharmonic.

Let us start by considering a cubic curve in the plane defined by the equation
\[
Ax^3 + By^3 + Cz^3 + Px^2y + Qy^2z + Rz^2x + Txy^2 +Uyz^2 + Vzx^2 + Mxyz = 0.
\]
Following \cite{ART}, we define
\begin{align*}
a_1 &= M \\
a_2 &= -(PU + QV + RT) \\
a_3 &= 9ABC - (AQU + BRV + CPT ) - (TUV + PQR) \\
a_4 &= (ARQ^2 + BPR^2 + CQP^2 + ATU^2 + BUV^2 + CVT^2) \\
    &\text{ }\;\; +(PQUV + QRVT + RPTU) - 3(ABRU + BCPV + CAQT) 
\end{align*}
and then
\[
b_2 = a_1^2+4a_2 \;\;\;\text{ and }\;\;\; b_4 = 2a_4+a_1a_3.
\]
Then
\[
c_4 = b_2^2-24b_4
\]
is a multiple of the $A$ coefficient of a Weierstrass equation for the cubic.
Hence, $c_4=0$ is the condition for the cubic to have $J=0$.

One can  apply this to a pencil, generated by two distinct cubics with coefficients
$\{A,B,\\
\ldots,M\}$ and $\{A_1,B_1,\ldots,M_1\}$; one applies the above formulas to the set of coefficients (depending on the pencil parameter $t$) which is $\{A+tA_1,\ldots,M+tM_1\}$.
This gives $c_4$ as a polynomial (of degree four in $t$), and if we want $c_4$ identically zero, we would set the five coefficients of this polynomial equal to zero.

\begin{example}\label{ex:1}
It is an exercise to type this into a symbolic algebra package (such as SageMath)
and make computations.  We did this, in general, and applied it to the pencil generated by $x^3+y^3+z^3=0$
and $Px^2y + Qy^2z + Rz^2x=0$; the answer in this case is that
\[
c_4 = -48(P^2Q + Q^2R + PR^2) t^3.
\]

\end{example}

\begin{example}\label{ex:2}
We applied the general formula
to a pencil generated by the cuspidal cubic $x^3-y^2z$
and a general one with coefficients $\{A,\ldots,M\}$ as above.
The resulting quartic polynomial $c_4(t)$ has no constant term,
and the linear term is $-48Rt$.
Hence, we must have $R=0$ to have a constant $J=0$ pencil.
Substituting that into the other coefficients of $c_4$ we arrive at
\begin{align*}
c_4 &= -(216ABCM - M^4 + 48(CP^2Q+ ATU^2+ BUV^2) \\
 & - 24(CMPT + AMQU + MTUV) + 8(M^2PU + M^2QV) - 16P^2U^2  \\
& - 144(BCPV + ACQT)  + 48CT^2V + 16(PQUV  - Q^2V^2) )t^4 \\
 & - 8(27BCM - 6CP^2 + 6TU^2+ 18ACT - 18CQT + 3AMU - 3MQU\\  
& - M^2V - 2PUV + 4QV^2)t^3 \\
 & - 8(18CT + 3MU - 2V^2)t^2.
\end{align*}
By adding an appropriate multiple of the cuspidal equation to the other generator,
we can also assume that $A=0$; if we do that it simplifies to 
\begin{align*}
c_4 &= (M^4 - 48CP^2Q + 24CMPT - 8M^2PU + 16P^2U^2 + 144BCPV - 8M^2QV \\
&- 48CT^2V - 16PQUV + 24MTUV + 16Q^2V^2 - 48BUV^2)t^4 \\
&  - 8(27BCM - 6CP^2 - 18CQT - 3MQU + 6TU^2 - M^2V - 2PUV + 4QV^2)t^3 \\
&  - 8(18CT + 3MU - 2V^2)t^2.
\end{align*}

\end{example}

\begin{remark} The locus of the six cuspidal cubics in an pencil of elliptic plane curve is   the discriminant of a  (non minimal) Weiestrass elliptic fibration $W \to \mathbb{P}^2$, where $W$ is birational to a Calabi-Yau threefold with $\operatorname{rk}MW(W/\mathbb{P}^2)=10$. This gives  the highest known rank of a Mordell-Weil group of a Calabi-Yau threefold \cite{GrassiWeigand2022}.
The search of such effective bounds is of interest to  the physics of phenomenology and string theory.  N. E. Elkies   constructed a family of  elliptic fibrations with numerically trivial canonical bundle over $\mathbb{P}^2$ \cite{Elkies2018}. Elkies does not claim however that the  threefolds are birationally Calabi-Yau. To prove that the model  $W\to \mathbb{P}^2$ is  in Elkies' family, one would have to prove that  the equation of the product of the six cuspidal curves is the restriction of an invariant of the Burkhardt group  $\operatorname{Sp}(4, \mathbb F_3) \simeq G_{25920}$.
\end{remark}

\section{The mixed case {$(4,2)$}}\label{sec:mix1} 

\subsection{The Weierstrass equation and moduli}
In this case, the four cuspidal fibres occur at points of common roots to the Weierstrass coefficients $A$ and $B$.
Therefore, since $A$ is a quartic, it must divide $B$, 
and we may write $B=AQ$ for a quadratic polynomial $Q$.
Then we have the discriminant
\[
D = 4A^3 + 27B^2 = A^2(4A+27Q^2)
\]
and in order that we have the condition that the remaining singular fibres are two $I_2$'s,
we need $D$ to be a perfect square; this requires us to have a quadratic polynomial $P$ such that 
\[
P^2 = 4A + 27Q^2.
\]
Conversely, if we have any two quadratics $P$ and $Q$, we set
\[
A = (P^2-27Q^2)/4 \;\;\;\text{ and }\;\;\; B = AQ
\]
and for $P$ and $Q$ general, we will have a rational elliptic surface
with the desired configuration of singular fibres.

The six parameters of $P$ and $Q$ give us two moduli,
factoring out the automorphisms of the base curve and the scaling in the Weierstrass equation.

\subsection{The double cover models} \label{ssec:dcm} 
Looking at the double cover model of $\bbF_2$, we have here a double cover of $\bbF_2$ 
branched along the negative section $B$ 
and a trisection $T$ in the linear system $|3B+6F|$ disjoint from $B$. 
This trisection has two double points and has four flexes. 

Let us map  $\bbF_2$ to $\bbP^3$ via the linear system $|B+2F|$. 
The image is a quadric cone $Q$ 
and the double cover is branched along the vertex of $Q$ and the image of $T$, 
that we still denote by $T$ for convenience. 
The curve $T$ of degree $6$ has two nodes and $4$ lines of the ruling of the cone as flex tangents. The geometric genus of $T$ is 2. 

Let $x$ be one of the nodes of $T$, 
and project $Q$ down to $\bbP^2$ birationally from $x$. 
The curve $T$ is mapped to  a plane quartic $C$ with a node $q$ 
(the image of the other node of $T$). 
The image of the vertex of the cone $Q$ is a point $p$ of the plane 
and $C$ passes through  $p$. 
Then we have the double cover of $\bbP^2$ ramified model of the RESS 
with the associated pair $(C,p)$. 
It turns out that there are four distinct  lines passing through $p$ 
that are flex tangents to $C$ off $p$. 

It would be possible to find a normal form for such a pair $(C,p)$, but we do not dwell on this here. 

\section{The mixed case $(3,3)$}\label {sec:mix2}
\subsection{The Weierstrass equation and moduli}
Here we may choose coordinates on $\bbP^1$
so that the three $II$ fibres are at $0,1,\infty$.
In that case, in the Weierstrass equation we will have
\[
A = t(t-1)(t-\lambda) \;\;\text{ and }\;\; B = t(t-1)P(t)
\]
where we have used up the scaling parameter
and the polynomial $P$ has degree three, 
and does not have $0,1,\infty$ as roots.
The discriminant $D(t)$ then factors as
\[
D = t^2(t-1)^2[4t(t-1)(t-\lambda)^3 + 27P(t)^2]
\]
and in order to have the three $I_2$ fibres, this must be a perfect square.
Hence, there is a cubic polynomial $Q(t)$ such that
\[
4t(t-1)(t-\lambda)^3 + 27P(t)^2 = Q(t)^2
\]
or
\[
4t(t-1)(t-\lambda)^3 = Q^2-27P^2 = (Q+rP)(Q-rP)
\]
where $r = \sqrt{27}$.

Now we note that $(t-\lambda)$ cannot be a factor of both $Q+rP$ and $Q-rP$,
since if so, it would be a factor of their sum and difference,
and hence a common factor of $P$ and $Q$.
This contradicts the fact that $P(\lambda) \neq 0$.

Therefore, $(t-\lambda)^3$ must be equal (up to a constant factor) 
to either $Q+rP$ or $Q-rP$, 
and we may assume it is $Q+rP$ (for one of the two roots $r$ of $27$).

This means that we have
\[
Q + r P = \alpha(t-\lambda)^3 \;\;\text{ and }\;\; Q-rP = \beta t(t-1)
\]
for some constants $\alpha, \beta$, and we must have $\alpha\beta = 4$.
Hence we have
\[
2rP = \alpha(t-\lambda)^3 - \beta t(t-1)
\]
or
\[
P(t) = \frac{\alpha(t-\lambda)^3}{2r} - 2 \frac{t(t-1)}{r\alpha}.
\]
This determines the Weierstrass equation using the two parameters $\alpha,\lambda$;
we see that there are two moduli for the family.

\subsection{An alternative description of the double cover}\label{ssec:ddcc}
Let $T$ be the trisection of $\bbF_2$ determining the double cover representation.
In this case, $T$ has three nodes and is flexed to three fibres.
There is a $(+2)$-section $S$ passing through the three nodes.

Perform three elementary transformations of the $\bbF_2$,
at the three nodes.
The section $S$ becomes a $(-1)$-section; the resulting ruled surface is an $\bbF_1$.
If we blow down $S$, we arrive at the plane, and 
the trisection $T$ becomes a smooth cubic curve.
This curve has the three flexes on it,
and the three flexed tangent lines are concurrent at the point we blow up
to obtain the $\bbF_1$.

It is a standard fact that the only cubics with three concurrent flexed tangent lines
are the ones with $J=0$ (see, e.g.,  \cite[Vol. II, p. 234] {EC}).

Now we may reverse the above and construct the double cover representation
by first choosing the $J=0$ cubic curve in the plane.
There are no moduli here.
Then we choose three concurrent flexed tangents, and blow up the point of concurrency
(which is not on the cubic).
This produces $\bbF_1$; the cubic is disjoint from the $(-1)$-curve.

Now we perform three elementary transformations at three collinear points of the proper transform of the cubic.
There are two moduli for this.
This produces the trisection $T$, and gives the resulting elliptic surface.

\subsection{The double plane model} 
Take an equianharmonic cubic $\Gamma$ 
that has three flexes concurring at a point $p$. 
Take a line $L$ that does not pass through any of the above flexes. 
Set $C=\Gamma+L$. 
Then the pair $(C,p)$ is associated to a special RESS of type $(3,3)$ 
and, taking into account the discussion in Section \ref {ssec:ddcc}, we see that we obtain the general such RESS in this way. Again, we see that the number of moduli is two (the choice of the line $L$). 
This also leads to a normal form for the pair $(C,p)$. 
For example we may take for $\Gamma$ the Fermat cubic $x^3+y^3+z^3=0$, 
for $p$ the point $(1:1:1)$ 
(the intersection of the three flex tangents $x-y=0, x-z=0, y-z=0$), 
and for $L$ a general line $ax+by+cz=0$. 

There is another way to arrive at a double-plane model, as follows. 
As in Section \ref {ssec:ddcc}, 
we have the trisection $T$ of $\bbF_2$ 
determining the double cover representation.
Then $T$ has three nodes and is flexed to three fibres.

As in \ref{ssec:dcm}, realize  $\bbF_2$ in $\bbP^3$ as a quadric cone $Q$ 
and consider the image of $T$, which we still denote by $T$, by abuse of notation. The curve $T$ of degree $6$ has three nodes 
and $3$ lines of the cone as flex tangents. 
The geometric genus of $T$ is $1$. 

Let $x$ be one of the nodes of $T$, 
and project $Q$ down to $\bbP^2$ birationally from $x$. 
The curve $T$ is mapped to a plane quartic $C$ with two nodes 
(the images of the other two nodes of $T$). 
The image of the vertex of the cone $Q$ is a point $p$ of the plane 
and $C$ passes through $p$. 
Then we have the double cover of $\bbP^2$ ramified model of the RESS 
with the associated pair $(C,p)$. 
It turns out that there are three distinct lines passing through $p$ 
that are flex tangents to $C$ off $p$. 

Again, we do not dwell here on finding a normal form for the pair $(C,p)$. 

\section{The mixed case $(2,4)$}\label {sec:mix3}

\subsection{A model for the cubic pencil}
Again, we let $S_0$ denote the zero section of the elliptic surface $\calX$.
In this cas,e we can denote the components of the four $I_2$ fibres 
by $C_i, D_i$, $1 \leq i \leq 4$, where $D_i$ meets $S_0$; all are $(-2)$-curves.
The $C_i$'s generate the rank four sublattice $R \subset U^\perp$,
and recall that $U^\perp/R \cong \MW$.

%
%
%
%

To indicate how the corresponding cubic pencils may look,
we consider the elliptic surface $\calX$ with zero section $S_0$, 
blow it down, and arrive at the Del Pezzo surface of degree one.
On this surface, as noted above, the collection of ($-1)$-curves
form the $240$ roots of an $E_8$ lattice 
via the map $\sigma$ (see Section \ref {NSMW});
moreover, disjoint $(-1)$-curves correspond to roots which meet in $-1$.
The Weyl group acts transitively on subsets of $8$ roots which meet in $-1$.
It also acts transitively on subsets of $3$ roots which are orthogonal, 
as noted in Section \ref {NSMW}.

On $\calX$ we have four $C_i$'s, 
which correspond to four double points of the trisection branch curve on $\bbF_2$.
Given any three of them, we may consider the $(+2)$-section of $\bbF_2$
passing through them.  
This section lifts to two disjoint sections on $\calX$, each meeting the three given $C_i$'s once.

These three $C_i$'s are classes in $U^\perp \cong E_8$
which are orthogonal roots.  
Hence, by the transitivity of the Weyl group action, we may choose them to be the following three elements of $E_8$
(using the negative definite $\bbR^8$ description):
\[
\begin{array}{ccccccccc}
C_1: & 1 & 1 & 0 & 0 & 0 & 0 & 0 & 0 \\
C_2: & 1 & -1 & 0 & 0 & 0 & 0 & 0 & 0 \\
C_3: & 0 & 0 & 1 & 1 & 0 & 0 & 0 & 0 
\end{array}
\]
We now look for the two disjoint sections as constructed above
which meet these three $C_i$'s; these correspond to roots in $E_8$
which intersect each of these three $C_i$'s once.
In fact, they are uniquely determined geometrically;
either of these sections map to the unique $(+2)$-section of $\bbF_2$.
It is easy to see that the only two roots that can work are:
\[
\begin{array}{ccccccccc}
u_a: & 1 & 0 & 1 & 0 & 0 & 0 & 0 & 0 \\
u_b: & 1 & 0 & 0 & 1 & 0 & 0 & 0 & 0 \\
\end{array}
\]
These two roots do not meet the $C_4$ 
(and also $C_4$ is disjoint from the three $C_i$'s), so it must be the case 
(up to permutation of the final four coordinates) that $C_4$ is
\[
\begin{array}{ccccccccc}
C_4: & 0 & 0 & 0 & 0 & 1 & 1 & 0 & 0 \\
\end{array}
\]
We are now in a position to find four roots $u_i$ ($1\leq i \leq 4$)
such that $u_i \cdot C_j = \delta_{ij}$ and  $u_i \cdot u_j = 1$ for $i \neq j$.
The following work:
\[
\begin{array}{ccccccccc}
u_1: & 1/2 & 1/2 & 1/2 & -1/2 & 1/2 & -1/2 & 1/2 & 1/2 \\
u_2: & 1/2 & -1/2 & 1/2 & -1/2 & 1/2 & -1/2 & 1/2 & -1/2 \\
u_3: & 0 & 0 & 1 & 0 & 0 & 0 & 1 & 0 \\
u_4: & 0 & 0 & 0 & 0 & 1 & 0 & 1 & 0 \\
\end{array}
\]
If we let $S_i = \sigma(u_i)$ for each $i$,
then the $S_i$'s are disjoint sections of $\calX$,
each $S_i$ meeting only $C_i$.
Therefore, we can blow $\calX$ down to the plane
by blowing down first $S_0$,
then the $S_i,C_i$ pair for each $i$ ($1\leq i \leq 4$).
These nine blowdowns arrive at the plane, and the two $II$ fibres go to cuspidal cubics, of course.
These two cuspidal curves generate the pencil,
and the base points are the one simple base point 
(to which $S_0$ is blown down)
and four infinitely near double base points 
(to which the $S_i,C_i$ pairs are blown down).
The four $I_2$ fibres go to nodal cubics,
each with a node at the double base point; the nodal cubic is the image of the $D_i$ fibre component, of course.

This proves the following.

\begin{theorem}
Given a RESS $(\calX,S_0)$ with two $II$ fibres and four $I_2$ fibres,
$\calX$ may be blown down to a pencil of cubics
generated by two cuspidal cubics which are simply tangent at four points, and meet once more transversally.
Any two cubic pencils which resolve to $\calX$ are Cremona equivalent,
and any cubic pencil that resolves to $\calX$ is Cremona equivalent
to a four-tangent cuspidally generated pencil.
\end{theorem}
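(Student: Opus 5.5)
The plan is to make the lattice computation preceding the statement into a proof in three steps: fix the $I_2$ data by Weyl-group transitivity, use the chosen roots to blow $\calX$ down to a pencil of the stated shape, and deduce the Cremona statements from transitivity once more.

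First, the normalization. For each $I_2$ fibre I write $C_i$ for the component missing $S_0$, so the $C_i$ are four pairwise orthogonal roots of $U^\perp\cong -E_8$. By the transitivity of the Weyl group on triples of orthogonal roots (Section \ref{NSMW}), I take $C_1,C_2,C_3$ to be the three vectors listed above. Next I use the $(+2)$-section of $\bbF_2$ through the corresponding three nodes of the trisection. It lifts to two sections disjoint from $S_0$, and each meets exactly those three $C_i$. So there are two roots, each of inner product $1$ with $C_1,C_2,C_3$. A short enumeration of the $240$ roots shows these must be $u_a$ and $u_b$. Since $C_4$ is orthogonal to $C_1,C_2,C_3$, to $u_a$ and to $u_b$ (the lifts do not meet $C_4$), it must, after permuting the last four coordinates, equal the vector given. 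I expect this step to be the main obstacle. Transitivity on triples is clear, but pinning down $C_4$ needs care on two points. The residual stabilizer still acts on the last coordinates, and one must check that no half-integer root fits. One must also check that $u_a,u_b$ really do not meet $C_4$, i.e.\ that the fourth node is not on that $(+2)$-section; otherwise the $I_2$ count would change. My first attempt is direct enumeration of roots orthogonal to the five fixed vectors.

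Second, the contraction. I check arithmetically that $u_1,\dots,u_4$ are roots with $u_i\cdot C_j=\delta_{ij}$ and mutual products of $-1$ in the negative definite form. Then the Lemma of Section \ref{NSMW} applies: $S_i=\pi(u_i)$ are disjoint sections, disjoint from $S_0$, and $S_i$ meets only $C_i$ among the $C_j$. I then blow down $S_0$ and, for each $i$, $S_i$ followed by $C_i$. That is nine contractions, so the result is $\bbP^2$. The fibres map to cubics of the pencil. Each $D_i$ becomes a nodal cubic with its node at the infinitely-near double base point coming from $(S_i,C_i)$. The two type-$II$ fibres miss every contracted curve except the sections, so they become cuspidal cubics. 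These two cuspidal cubics generate the pencil. Their $9$ intersections are one simple point, where $S_0$ goes, and four tangency points, where the $(S_i,C_i)$ pairs go. This is the four-tangent cuspidally generated pencil.

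Third, Cremona equivalence. Any blowdown of $\calX$ to a cubic pencil contracts $S_0$ first, which leaves a degree one Del Pezzo surface. The remaining eight contractions correspond to eight roots pairwise meeting in $-1$, and these form a single Weyl orbit. The Weyl group of $E_8$ is realized by Cremona transformations based at the base points, so any two cubic pencils resolving to $\calX$ are Cremona equivalent. In particular each is Cremona equivalent to the model built in the second step.
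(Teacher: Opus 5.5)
Your proposal is correct and is essentially the paper's own proof: the root computation preceding the statement, followed by contracting $S_0$ and the pairs $(S_i,C_i)$. It inherits two harmless slips from the paper. First, with the vectors as listed one gets $u_i\cdot C_j=-\delta_{ij}$ in the negative definite form, so the listed $C_j$ must be read as the negatives of the curve classes (these still form an orthogonal triple of roots, so transitivity applies). Second, contracting $S_0$ gives only a weak Del Pezzo surface, because the $C_i$ survive as $(-2)$-curves; Step 3 needs no lattice theory in any case, since two blowdowns $\phi_1,\phi_2\colon\calX\to\bbP^2$ give the Cremona map $\phi_2\circ\phi_1^{-1}$ between the two pencils.

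The step you flag as the main obstacle is quick. The $(+2)$-section meets $T$ in $(B+2F)\cdot(3B+6F)=6$ points counted with multiplicity, all absorbed by the three nodes, so it misses the fourth node. Orthogonality to $C_1,C_2,u_a,u_b$ then forces the first four coordinates of $C_4$ to vanish, which excludes the half-integral roots and $\pm(0,0,1,-1,0,0,0,0)$. So $C_4$ is an integral root supported on the last four coordinates. Permutations together with even sign changes of those coordinates lie in $W(E_8)$ and fix the five chosen vectors, and they bring $C_4$ to the listed form; permutations alone would not suffice.
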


\subsection{The number of moduli}
There are two moduli for these cubic pencils.
To see this, we first fix one of the cuspidal cubics.
There is a one-parameter family of automorphisms of the plane fixing that cubic.
To choose the second cubic, 
we note that the family of cuspidal cubics is $7$-dimensional;
and hence those that are tangent to the chosen cuspidal cubic four times are three-dimensional.

Then the automorphisms reduce the number of moduli to two.

\subsection{An alternative description}\label{ssec:alt}
Consider the trisection $T$ of $\bbF_2$ formed by the double cover representation.
In this case, $T$ has four nodes and is flexed to two fibres,
giving the four $I_2$ and two $II$ fibres.

Make three elementary transformations based at three of the nodes.
In this case the ruled surface $\bbF_2$ is transformed into an $\bbF_1$;
the negative curve on the $\bbF_1$ comes from the $(+2)$-section of the $\bbF_2$
which passes through the three nodes.
At this point the trisection $T$ is disjoint from this $(-1)$-curve and has only one node;
when we blow this $(-1)$-curve down, $T$ becomes a nodal cubic
and the base point of the pencil of lines that generate the ruling on the $\bbF_1$
does not lie on $T$.
Two of the lines in this pencil are flex tangent lines to $T$.

We reverse this construction to recover the original double-cover representation. This means choosing a nodal cubic (there are no moduli here),
and two flex lines to it (there are also no moduli for this).
We also choose two smooth points on $T$, say $p,q$; 
let $r$ be the third point of intersection of $T$ with the line joining $p$ and $q$.
Then we blow up the intersection point of the flex lines to obtain $\bbF_1$;
then we perform three elementary transformations at $p$, $q$, and $r$.
The line joining them becomes the $(-2)$-section of the resulting $\bbF_2$,
and the proper transform of $T$ is the trisection that gives $\calX$ in the double-cover representation.

This shows that there are two moduli: 
the choice of the two smooth points $p$ and $q$.

\subsection{The double plane representation} 
A different way to interpret the construction in Section \ref{ssec:alt} is as follows. Take $\Gamma$ a nodal cubic. 
There are three (collinear) flexes of $\Gamma$. 
Take two of them and the intersection $p$ of the flex tangents at these two points.  Then take a line $L$ that does not contain the two chosen flexes and the node. 
Set $C=\Gamma+L$. 
The pair $(C,p)$ is associated with a special RESS of type $(2,4)$. The discussion in Section \ref {ssec:alt} shows that, in this way, we obtain the general such RESS. 
This leads us to a normal form for the pair $(C,p)$. 
For example, we can take for $\Gamma$ the plane cubic 
with equation $xyz+x^3+y^3=0$ that has a node at $(0:0:1)$. 
The three collinear flexes of $\Gamma$ 
are the points $(1:-1:0)$, $(1\pm \sqrt 3i:2:0)$. 
The intersection point of the tangents at the last two is $p=(1:1:-3)$. 
Then we can take as $L$ a line $ax+by+cz=0$
that does not contain the points $(0:0:1)$ and $(1\pm \sqrt 3i:2:0)$.

A different way of constructing a double plane representation is the following. Consider again the trisection $T$ of $\bbF_2$ given by the double cover.
It has four nodes and is flexed to two fibres. 
Now realize $\bbF_2$ as a quadric cone in $\bbP^3$ 
and project down to a plane from a double point of $T$. 
Then $T$ maps to a degree $4$ rational curve with three nodes, 
and two flex tangents passing through a point $p\in C$, 
the image of the vertex of the quadric cone via the projection.
The RESS is now associated with the pair $(C,p)$. 

From this last model, we can see again that the number of moduli is two. 
Indeed, we can choose coordinates in the plane 
so that the three nodes of $C$ and the point $p$ 
are the three coordinate points and the $(1:1:1)$ point. 
The linear system $\calC$ of quartics that have this behaviour has dimension $4$. 
Now take two general lines $r_1,r_2$ through the point $p$ 
and let us require that these are flex tangent lines to a curve of $\calC$. 
It turns out that there are only finitely many such curves in $\calC$, 
so that the number of moduli is two, 
i.e., the choice of the two lines $r_1,r_2$. 
Indeed, the linear system $\calC$ maps $r_1\cup r_2$ 
to a pair of disjoint rational normal cubics $R_1, R_2$ in $\bbP^4$, 
lying in distinct hyperplanes $\Pi_1$ and $\Pi_2$, 
that intersect in a plane $P$. 
The curves of $\calC$ correspond to hyperplanes of $\bbP^4$. 
For example, the hyperplanes $\Pi_1$ and $\Pi_2$ 
correspond to the curves of $\calC$ containing $r_1$ and $r_2$, 
that consist of $r_1$ or $r_2$ plus the triangle formed 
by the lines pairwise joining the coordinate points. 
We must find the hyperplanes that intersect both cubics $R_1, R_2$ 
in a divisor consisting of one point with multiplicity $3$. 
We claim that there are only finitely many such hyperplanes. 
To see this, consider the map $\varphi_i: R_i\longrightarrow P^\vee$ 
that sends a point $x\in R_i$ 
to the intersection line of the osculating plane to $R_i$ at $x$ in $\Pi_i$ with $P$, 
for $1\leq i\leq 2$. 
The images of these two maps are two curves $D_1$ and $D_2$ in $P^\vee$. It is not difficult to see that, in fact, these two curves have degree three, though this will not be important for us. 
Then $D_1$ and $D_2$ intersect in finitely many (in fact nine) points, and correspondingly there are finitely many (in fact $9$) lines in $P$, such that two osculating planes to $R_1$ and $R_2$ intersect along one of these lines. 
The span of a pair of such planes is one of the required hyperplanes.

\end{document}